\newtheorem{theorem}{Theorem}[section]
\newtheorem{lemma}[theorem]{Lemma}
\newtheorem{corollary}[theorem]{Corollary}
\newtheorem{proposition}[theorem]{Proposition}
\newtheorem{conjecture}[theorem]{Conjecture}
\newenvironment{customconj}[1]
  {\innercustomconj}
  {\endinnercustomconj}
\newenvironment{customcor}[1]
  {\innercustomcor}
  {\endinnercustomcor}
\theoremstyle{definition}
\newtheorem{definition}[theorem]{Definition}
\theoremstyle{remark}
\newtheorem{remark}[theorem]{Remark}
\numberwithin{equation}{section}
\DeclareMathOperator{\rank}{rk}
\DeclareMathOperator{\Sing}{Sing}
\DeclareMathOperator{\codim}{codim}
\DeclareMathOperator{\Hom}{Hom}
\DeclareMathOperator{\Coh}{Coh}
\DeclareMathOperator{\D}{D}
\DeclareMathOperator{\K}{K}
\DeclareMathOperator{\NS}{NS}
\DeclareMathOperator{\ch}{ch}
\DeclareMathOperator{\Ext}{Ext}
\makeatletter \@namedef{subjclassname@2020}{\textup{2020}
Mathematics Subject Classification} \makeatother
\begin{document}
\title{Bogomolov-Gieseker type
inequalities on ruled threefolds}

\author{Hao Max Sun}
\address{Department of Mathematics, Shanghai Normal University, Shanghai 200234, People's Republic of China}

\email{hsun@shnu.edu.cn, hsunmath@gmail.com}



\subjclass[2020]{14F08, 14J30}

\date{May 28, 2022}

\keywords{Bridgeland stability condition, Bogomolov-Gieseker
inequality, fibred threefold, ruled threefold}

\begin{abstract}
We strengthen a conjecture by the author. This conjecture is a
Bogomolov-Gieseker type inequality involving the third Chern
character of mixed tilt-stable complexes on fibred threefolds. We
extend it from complexes of mixed tilt-slope zero to arbitrary
relative tilt-slope. We show that this stronger conjecture implies
the support property of Bridgeland stability conditions, and the
existence of explicit stability conditions. We prove our conjecture
for ruled threefolds, hence improving a previous result by the
author.
\end{abstract}

\maketitle

\setcounter{tocdepth}{1}

\tableofcontents

\section{Introduction}
Throughout this paper, we let $f:\mathcal{X}\rightarrow C$ be a
projective morphism from a complex smooth projective variety of
dimension $3$ to a complex smooth projective curve such that all
scheme-theoretic fibers of $f$ are integral and normal. We denote by
$F$ the general fiber of $f$, and fix a nef and relative ample
$\mathbb{Q}$-divisor $H$ on $\mathcal{X}$. When $\mathcal{X}$ is
ruled, i.e., $\mathcal{X}=\mathbb{P}(E)$ for some rank three vector
bundle $E$ on $C$, we prove a Bogomolov-Gieseker type inequality for
the third Chern character of relative tilt-semistable objects on
$\mathcal{X}$. As a corollary, we construct a family of Bridgeland
stability conditions on $\mathcal{X}$ for which the central charge
only depends on the degrees
$$\ch_0(\star), HF\ch_1(\star), H^2\ch_1(\star), F\ch_2(\star), H\ch_2(\star), \ch_3(\star)$$
of the Chern character, and that satisfy the support property.

Stability conditions for triangulated categories were introduced by
Bridgeland in \cite{Bri1}. The existence of stability conditions on
three-dimensional varieties is often considered the biggest open
problem in the theory of Bridgeland stability conditions. In
\cite{BMT, BMS, BMSZ}, the authors introduced a conjectural
construction of Bridgeland stability conditions for any projective
threefold. Here the problem was reduced to proving a
Bogomolov-Gieseker type inequality for the third Chern character of
tilt-stable objects. It has been shown to hold for Fano 3-folds
\cite{BMSZ, Piy}, abelian 3-folds \cite{ BMS}, some product type
threefolds \cite{Kos}, quintic threefolds \cite{Li2}, threefolds
with vanishing Chern classes \cite{Sun}, etc. Yucheng Liu \cite{Liu}
showed the existence of stability conditions on product type
varieties by a different method.

In \cite{Sun2}, we give a relative version of the construction of
Bayer, Macr\`i and Toda \cite{BMT} on fibred threefolds
$\mathcal{X}$. The construction also depends on a conjectural
Bogomolov-Gieseker type inequality (\cite[Conjecture 5.2]{Sun2}) for
mixed tilt-stable complexes. In this paper, we strengthen this
conjecture from complexes of mixed tilt-slope zero to arbitrary
relative tilt-slope:

\begin{customconj}{\ref{conj1}}
Let $\mathcal{E}$ be a $\nu^{\alpha,\beta}_{H, F}$-semistable object
in $\Coh_C^{ \beta H}(\mathcal{X})$ for some $(\alpha,\beta)$ in
$\mathbb{R}_{>0}\times\mathbb{R}$ with $\nu^{\alpha,\beta}_{H,
F}(\mathcal{E})\neq\infty$. Then
\begin{eqnarray*}
&&\left(F\ch_2^{\beta}(\mathcal{E})-\frac{\alpha^2}{2}H^2F\ch_0(\mathcal{E})\right)\left(H\ch_2^{\beta}(\mathcal{E})-\frac{H^3}{3H^2F}F\ch_2^{\beta}(\mathcal{E})\right)\\
\nonumber
&\geq&\left(\ch_3^{\beta}(\mathcal{E})-\frac{\alpha^2}{2}H^2\ch_1^{\beta}(\mathcal{E})+\frac{\alpha^2H^3}{3H^2F}HF\ch_1^{\beta}(\mathcal{E})\right)HF\ch_1^{\beta}(\mathcal{E}).
\end{eqnarray*}
\end{customconj}

We show that Conjecture \ref{conj1} is equivalent to a more natural
and seemingly weaker statement (Conjecture \ref{conj2}), and it
implies the existence of explicit stability conditions on
$\mathcal{X}$ satisfying the support property if the classical
Bogomolov inequality holds on all fibers of $f$(Theorem \ref{main}).
We prove the conjecture for ruled threefolds, hence improving a
previous result by the author:

\begin{theorem}\label{thm1.1}
Conjecture \ref{conj1} holds for $\mathcal{X}=\mathbb{P}(E)$, and
there exist stability conditions satisfying the support property on
$\mathbb{P}(E)$.
\end{theorem}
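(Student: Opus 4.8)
The plan is to reduce Theorem~\ref{thm1.1} to proving Conjecture~\ref{conj2} for $\mathcal{X}=\mathbb{P}(E)$ and to establish the latter by restricting to the fibers. The second assertion of the theorem is then automatic: every fiber of $\mathbb{P}(E)\to C$ is $\mathbb{P}^{2}$, on which the classical Bogomolov inequality is known, so once Conjecture~\ref{conj1} holds for $\mathbb{P}(E)$ the hypotheses of Theorem~\ref{main} are met and stability conditions satisfying the support property exist on $\mathbb{P}(E)$. Thus the whole problem is Conjecture~\ref{conj1} for $\mathbb{P}(E)$, and by the stated equivalence it suffices to prove Conjecture~\ref{conj2}.

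Next come the standard reductions. Passing to Harder--Narasimhan and then Jordan--H\"older factors with respect to $\nu^{\alpha,\beta}_{H,F}$ in $\Coh_C^{\beta H}(\mathcal{X})$, together with the convexity of the quadratic form underlying Conjecture~\ref{conj1} under filtrations of objects of equal relative tilt-slope, reduces to the case of a $\nu^{\alpha,\beta}_{H,F}$-stable $\mathcal{E}$. Twisting by $\mathcal{O}_{\mathcal{X}}(mH)$ sends $\beta\mapsto\beta+m$ and carries the two conjectural inequalities equivariantly, so $\beta$ may be confined to a bounded interval; and a continuity argument in $(\alpha,\beta)$ — the locus where the inequality fails is open, and on its boundary $\mathcal{E}$ either acquires a subobject of the same slope, contradicting stability, or one has $\alpha\to 0$ — concentrates the problem in a convenient region of parameters, in practice near the limit $\alpha\to 0$ governed by the double-tilted heart.

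The geometric input is the $\mathbb{P}^{2}$-bundle structure. I would first prove a restriction statement: if $\mathcal{E}$ is $\nu^{\alpha,\beta}_{H,F}$-semistable on $\mathbb{P}(E)$, then for a general fiber $i_{F}\colon F\hookrightarrow\mathcal{X}$ the derived restriction $\mathbf{L}i_{F}^{*}\mathcal{E}$ is tilt-semistable on $F\cong\mathbb{P}^{2}$. This should hold because $\nu^{\alpha,\beta}_{H,F}$ only records the fiberwise data $HF\ch_{1}^{\beta}$ and $F\ch_{2}^{\beta}$, so a tilt-destabilizing subobject of the restriction to the generic fiber would spread out to a $\nu^{\alpha,\beta}_{H,F}$-destabilizing subobject of $\mathcal{E}$ over $\mathcal{X}$ — a Mehta--Ramanathan/Langer-type spreading-out performed in the tilted heart. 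The classical Bogomolov--Gieseker inequality on $\mathbb{P}^{2}$ then bounds the fiberwise discriminant of $\mathcal{E}$, which after the reductions gives $F\ch_{2}^{\beta}(\mathcal{E})-\frac{\alpha^{2}}{2}H^{2}F\ch_{0}(\mathcal{E})\ge 0$ and hence controls the first factor on the left-hand side of Conjecture~\ref{conj1}. To reach the remaining terms — in particular $\ch_{3}^{\beta}(\mathcal{E})$ and $H\ch_{2}^{\beta}(\mathcal{E})$, which no single fiber detects — I would exploit that the base is a curve: for suitable $k$ the complexes $Rf_{*}(\mathcal{E}\otimes\mathcal{O}_{\mathcal{X}}(kH))$ become, after the reductions, semistable sheaves on $C$ whose ranks and degrees express the Chern numbers of $\mathcal{E}$ through Grothendieck--Riemann--Roch; feeding the fiberwise Bogomolov bound together with a weak-positivity (Fujita/Viehweg-type) or Clifford-type estimate for these pushforwards into that formula should assemble exactly the quadratic inequality of Conjecture~\ref{conj1}. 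An alternative is to use Orlov's decomposition $D^{b}(\mathbb{P}(E))=\langle f^{*}D^{b}(C),\, f^{*}D^{b}(C)\otimes\mathcal{O}(1),\, f^{*}D^{b}(C)\otimes\mathcal{O}(2)\rangle$ to write $\ch(\mathcal{E})$ in terms of ranks and degrees of three complexes on $C$ and reduce to an explicit numerical inequality subject to the stability constraint.

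The step I expect to be the main obstacle is the last one: upgrading the fiberwise Bogomolov inequality on $\mathbb{P}^{2}$ to the full three-dimensional inequality involving $\ch_{3}$. A single fiber carries genuinely incomplete information, so one must bound the vertical variation of $\mathcal{E}$ along $C$, and the pushforward/positivity estimate has to be sharp enough to land on the conjectural bound itself rather than a weaker consequence — all while remaining compatible with the earlier normalizations, especially the $\alpha\to 0$ degeneration and the wall structure. The restriction theorem in the tilted heart is also delicate, but spreading-out arguments of that shape are by now fairly routine.
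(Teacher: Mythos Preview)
Your reduction to Conjecture~\ref{conj2} and your observation that the fibers are $\mathbb{P}^2$ (so Theorem~\ref{main} applies once the conjecture is known) are both correct and match the paper. But from that point on your route diverges from the paper's, and the step you yourself flag as the main obstacle is a genuine gap. Restricting a $\nu^{\alpha,\beta}_{H,F}$-semistable object to the generic fiber and invoking Bogomolov on $\mathbb{P}^2$ recovers precisely $\overline{\Delta}_{H,F}(\mathcal{E})\geq 0$, which is Theorem~\ref{Bog-tilt} and is already proved in the paper exactly by that restriction argument (Lemma~\ref{K-stable}); it gives no new leverage on $\ch_3$. Your proposed substitute---positivity or Clifford-type estimates for $Rf_*(\mathcal{E}(kH))$ on $C$---is not made precise, and there is no reason to expect such estimates to produce the sharp constant $H^3/(3H^2F)$ in (\ref{3.2}); in particular, weak positivity of pushforwards typically requires $\mathcal{E}$ to be a sheaf with good singularities, not a two-term complex in a tilted heart.

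The paper's argument is quite different and avoids this difficulty entirely. One first reduces to \emph{relative $\overline{\beta}$-semistable} objects by induction on the integer $\overline{\Delta}_{H,F}(\mathcal{E})$: if $\overline{\Delta}_{H,F}(\mathcal{E})>0$ then by Proposition~\ref{Wall}(4),(5) there is a wall strictly between $(\alpha,\beta)$ and $(0,\overline{\beta}(\mathcal{E}))$ whose stable factors have strictly smaller discriminant. For a $\overline{\beta}$-semistable $\mathcal{E}$ (normalized so $0\leq\overline{\beta}(\mathcal{E})<1$), the key input is a direct Euler-characteristic estimate: the line bundles $\mathcal{O}_{\mathcal{X}}(nH)$ and $\mathcal{O}_{\mathcal{X}}(K_{\mathcal{X}}+nH)[1]$ are $\nu^{\alpha,\beta}_{H,F}$-stable for all $(\alpha,\beta)$ in this range (Proposition~\ref{Wall}(6)), and a slope comparison near $(0,\overline{\beta})$ gives $\Hom(\mathcal{O}(nH),\mathcal{E})=\Ext^2(\mathcal{O}(nH),\mathcal{E})=0$ for $n=1,2$, hence $\chi(\mathcal{O}(nH),\mathcal{E})\leq 0$. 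Hirzebruch--Riemann--Roch (using the Chern-class formulas of Lemma~\ref{can}) turns these into two explicit linear inequalities in $\ch_3,H\ch_2,F\ch_2,HF\ch_1$. Since these must also hold for $\mathcal{E}(aF)$ for every $a\in\mathbb{Z}$ (Lemma~\ref{tensor}), the coefficients of $a$ are forced to vanish, yielding $HF\ch_1=F\ch_2=H\ch_2=0$ and $\ch_3\leq 0$; feeding this back into (\ref{3.2}) gives Conjecture~\ref{conj2}. The missing idea in your proposal is this Hom-vanishing/Euler-characteristic computation against the tautological line bundles, combined with the $\mathcal{O}(aF)$-twist trick.
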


As an application, we obtain the following Bogomolov type inequality
which seems stronger than the classical Bogomolov inequality.

\begin{customcor}{\ref{cor1}}
Let $\mathcal{E}$ be a $\mu_{H, F}$-semistable torsion free sheaf on
$\mathcal{X}=\mathbb{P}(E)$. Then
$$\widetilde{\Delta}_{H,F}(\mathcal{E})\geq\frac{H^3}{6H^2F}\overline{\Delta}_{H,F}(\mathcal{E})+\frac{H^3}{2H^2F}(HF\ch_1(\mathcal{E}))^2.$$
\end{customcor}

\subsection*{Organization of the paper}

Our paper is organized as follows. In Section \ref{S2}, we review
some basic results of relative slope-stability and relative
tilt-stability on a fibred variety in \cite{BLMNPS} and \cite{Sun2}.
Then in Section \ref{S3}, we recall the definition of stability
conditions, propose Conjecture \ref{conj1} and \ref{conj2} and give
a conjectural construction of Bridgeland stability conditions on
fibred threefolds. We prove Theorem \ref{thm1.1} and Corollary
\ref{cor1} in Section \ref{S4}.

\subsection*{Notation}
Let $X$ be a smooth projective variety. We denote by $T_X$ and
$\Omega_X^1$ the tangent bundle and cotangent bundle of $X$,
respectively. $K_X$ and $\omega_X$ denote the canonical divisor and
canonical sheaf of $X$, respectively. We write $c_i(X):=c_i(T_X)$
for the $i$-th Chern class of $X$. We write $\NS(X)$ for the
N\'eron-Severi group of divisors up to numerical equivalence. We
also write $\NS(X)_{\mathbb{Q}}$, $\NS(X)_{\mathbb{R}}$, etc. for
$\NS(X)\otimes\mathbb{Q}$, etc. For a triangulated category
$\mathcal{D}$, we write $\K(\mathcal{D})$ for its Grothendieck
group. For a variety $Y$, we denote by $\Sing Y$ the singular locus
of $Y$.

Let $\pi:\mathcal{X}\rightarrow S$ be a flat morphism of Noetherian
schemes and $W\subset S$ be a subscheme. We denote by
$\mathcal{X}_W=\mathcal{X}\times_S W$ the fiber of $\pi$ over $W$,
and by $i_W:\mathcal{X}_W\hookrightarrow \mathcal{X}$ the embedding
of the fiber. In the case that $S$ is integral, we write $K(S)$ for
its fraction field, and $\mathcal{X}_{K(S)}$ for the generic fiber
of $\pi$. We denote by $\D^b(\mathcal{X})$ the bounded derived
category of coherent sheaves on $\mathcal{X}$. Given $E\in
\D^b(\mathcal{X})$, we write $E_W$ (resp., $E_{K(S)}$) for the
pullback to $\mathcal{X}_W$ (resp., $\mathcal{X}_{K(S)}$).

Let $F$ be a coherent sheaf on $X$. We write $H^j(F)$ ($j\in
\mathbb{Z}_{\geq0}$) for the cohomology groups of $F$ and write
$\dim F$ for the dimension of its support. We write $\Coh_{\leq
d}(X)\subset\Coh(X)$ for the subcategory of sheaves supported in
dimension $\leq d$. Given a bounded t-structure on $\D^b(X)$ with
heart $\mathcal{A}$ and an object $E\in \D^b(X)$, we write
$\mathcal{H}_{\mathcal{A}}^j(E)$ ($j\in \mathbb{Z}$) for the
cohomology objects with respect to $\mathcal{A}$. When
$\mathcal{A}=\Coh(X)$, we simply write $\mathcal{H}^j(E)$. Given a
complex number $z\in\mathbb{C}$, we denote its real and imaginary
part by $\Re z$ and $\Im z$, respectively. We write
$\sqrt{\mathbb{Q}_{>0}}$ for the set $\{\sqrt{x}: x\in
\mathbb{Q}_{>0}\}$.


\subsection*{Acknowledgments}
The author was supported by National Natural Science
Foundation of China (Grant No. 11771294, 11301201).

\section{Relative tilt-stability}\label{S2}
We will review some basic results on the relative slope-stability
and relative tilt-stability in \cite{BLMNPS} and \cite{Sun2}.

\subsection{Stability for sheaves}
For any $\mathbb{R}$-divisor $D$ on $\mathcal{X}$, we define the
twisted Chern character $\ch^{D}=e^{-D}\ch$. More explicitly, we
have
\begin{eqnarray*}
\begin{array}{lcl}
\ch^{D}_0=\ch_0=\rank  && \ch^{D}_2=\ch_2-D\ch_1+\frac{D^2}{2}\ch_0\\
&&\\
\ch^{D}_1=\ch_1-D\ch_0 &&
\ch^{D}_3=\ch_3-D\ch_2+\frac{D^2}{2}\ch_1-\frac{D^3}{6}\ch_0.
\end{array}
\end{eqnarray*}

We define the relative slope $\mu_{H, F}$ of a coherent sheaf
$\mathcal{E}\in \Coh(\mathcal{X})$ by
\begin{eqnarray*}
\mu_{H, F}(\mathcal{E})= \left\{
\begin{array}{lcl}
+\infty,  & &\mbox{if}~\ch_0(\mathcal{E})=0,\\
&&\\
\frac{FH\ch_1(\mathcal{E})}{FH^{2}\ch_0(\mathcal{E})}, &
&\mbox{otherwise}.
\end{array}\right.
\end{eqnarray*}

\begin{definition}\label{slope}
A coherent sheaf $\mathcal{E}$ on $\mathcal{X}$ is $\mu_{H,
F}$-(semi)stable (or relative slope-(semi)stable) if, for all
non-zero subsheaves $\mathcal{F}\hookrightarrow \mathcal{E}$, we
have
$$\mu_{H, F}(\mathcal{F})<(\leq)\mu_{H, F}(\mathcal{E}/\mathcal{F}).$$
\end{definition}

Similarly, for any point $s\in C$, we can define
$\mu_{H_s}$-stability (or slope-stability) of a coherent sheaf
$\mathcal{G}$ on the fiber $\mathcal{X}_s$ over $s$ for the slope
$\mu_{H_s}$:
\begin{eqnarray*}
\mu_{H_s}(\mathcal{G})= \left\{
\begin{array}{lcl}
+\infty,  & &\mbox{if}~\ch_0(\mathcal{G})=0,\\
&&\\
\frac{H_s\ch_1(\mathcal{G})}{H_s^{2}\ch_0(\mathcal{G})}, &
&\mbox{otherwise}.
\end{array}\right.
\end{eqnarray*}
Here $\ch_1(\mathcal{G})$ is defined as a Weil divisor up to linear
equivalence such that $$\ch_1(\mathcal{G})|_{\mathcal{X}_s-\Sing
\mathcal{X}_s}=\ch_1(\mathcal{G}|_{\mathcal{X}_s-\Sing
\mathcal{X}_s}).$$ Since, by our assumption, $\codim (\Sing
\mathcal{X}_s)\geq2$, $\ch_1(\mathcal{G})$ is well-defined. One sees
that $\mu_{H, F}(\mathcal{E})=\mu_{H_s}(\mathcal{E}_s)$.

The below lemma gives the relation between the Chern characters of
objects on fibers and their pushforwards.

\begin{lemma}\label{Chern}
Let $W$ be a closed subscheme of $C$, and $j$ be a positive integer.
\begin{enumerate}
\item For any $\mathcal{E}\in\D^b(\mathcal{X})$ and
$\mathbb{Q}$-divisor $D$ on $\mathcal{X}$, we have
$$\ch_j^D(i_{W*}\mathcal{E}_W)=\mathcal{X}_W\ch^D_{j-1}(\mathcal{E}).$$
\item Assume that $W$ is a closed point of $C$. Then for any
$\mathcal{Q}\in\D^b(\mathcal{X}_W)$ we have
$$\ch_{j}(i_{W*}\mathcal{Q})=i_{W*}\ch_{j-1}(\mathcal{Q})$$ if $0\leq
j\leq2$. Moreover, we have
$\ch_{3}(i_{W*}\mathcal{Q})=i_{W*}\ch_{2}(\mathcal{Q})$ if
$\mathcal{X}_W$ is smooth.
\end{enumerate}
\end{lemma}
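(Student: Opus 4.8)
The plan is to deduce both parts from the projection formula and from Grothendieck--Riemann--Roch for the closed embedding $i_W$, exploiting that $f$ is flat and that the fibre over a point of $C$ pulls line bundles from that point back to trivial bundles.

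For part (1) I would first note that $W$ is $0$-dimensional (the formula is meant for a proper closed subscheme), so $\mathcal{X}_W=f^{-1}(W)$ is an effective Cartier divisor on the smooth threefold $\mathcal{X}$ with $\mathcal{O}_{\mathcal{X}}(\mathcal{X}_W)=f^{*}\mathcal{O}_{C}(W)$. From the Koszul resolution $0\to\mathcal{O}_{\mathcal{X}}(-\mathcal{X}_W)\to\mathcal{O}_{\mathcal{X}}\to i_{W*}\mathcal{O}_{\mathcal{X}_W}\to0$ one gets $\ch(i_{W*}\mathcal{O}_{\mathcal{X}_W})=1-e^{-\mathcal{X}_W}$, and since $\mathcal{X}_W=f^{*}W$ and $A^{2}(C)=0$ we have $\mathcal{X}_W^{2}=f^{*}(W\cdot W)=0$, so $\ch(i_{W*}\mathcal{O}_{\mathcal{X}_W})=\mathcal{X}_W$ is concentrated in degree one. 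The projection formula gives $i_{W*}\mathcal{E}_W=i_{W*}Li_W^{*}\mathcal{E}\cong\mathcal{E}\otimes^{L}i_{W*}\mathcal{O}_{\mathcal{X}_W}$ in $\D^{b}(\mathcal{X})$, and multiplicativity of the Chern character together with the $e^{-D}$-twist then yields $\ch^{D}(i_{W*}\mathcal{E}_W)=\ch^{D}(\mathcal{E})\,\mathcal{X}_W$; extracting the degree-$j$ component is the assertion. This part is routine once the bookkeeping is set up.

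For part (2) the point is that $i_W\colon\mathcal{X}_W\hookrightarrow\mathcal{X}$ is a regular closed embedding of codimension one whose normal bundle $N_{\mathcal{X}_W/\mathcal{X}}=i_W^{*}\mathcal{O}_{\mathcal{X}}(\mathcal{X}_W)=(f\circ i_W)^{*}\mathcal{O}_{C}(W)$ is trivial, since $f\circ i_W$ factors through the point $W$. I would then invoke Grothendieck--Riemann--Roch for $i_W$. When $\mathcal{X}_W$ is smooth this is the usual statement $\ch(i_{W*}\mathcal{Q})\,\td(T_{\mathcal{X}})=i_{W*}\!\left(\ch(\mathcal{Q})\,\td(T_{\mathcal{X}_W})\right)$; triviality of the normal bundle gives $i_W^{*}\td(T_{\mathcal{X}})=\td(T_{\mathcal{X}_W})$, so the projection formula cancels the Todd classes and produces $\ch(i_{W*}\mathcal{Q})=i_{W*}\ch(\mathcal{Q})$ in \emph{every} degree --- this covers both the range $0\le j\le 2$ and the ``moreover'' clause. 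For a general fibre $\mathcal{X}_W$ (which by hypothesis is normal and integral) and $0\le j\le 2$, only $\ch_{j-1}(\mathcal{Q})$ with $j-1\le1$ occurs, and $\ch_{0}=\rank$ and $\ch_{1}=c_{1}$ are defined and additive for any coherent sheaf on any scheme; then $\ch_{0}(i_{W*}\mathcal{Q})=0=i_{W*}\ch_{-1}(\mathcal{Q})$ by rank reasons, $c_{1}(i_{W*}\mathcal{Q})=\rank(\mathcal{Q})\,[\mathcal{X}_W]=i_{W*}\ch_{0}(\mathcal{Q})$, and the case $j=2$ follows by expressing $[\mathcal{Q}]$ in $\K(\mathcal{X}_W)_{\mathbb{Q}}$ through structure sheaves of subvarieties, where the identity reduces to part (1) and Riemann--Roch on curves and points.

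The one delicate step is part (2) when $\mathcal{X}_W$ is singular: one has to pin down what $\ch(\mathcal{Q})$ means on a normal surface with isolated singularities and check that the Todd correction terms appearing in the Baum--Fulton--MacPherson form of Riemann--Roch do not contribute in the asserted degrees. It is exactly the correction supported at the singular points, which lies in $A_{0}$, that obstructs the $\ch_{3}$ identity and forces the smoothness hypothesis in the ``moreover'' clause; everything else is a matter of organizing the projection-formula computation.
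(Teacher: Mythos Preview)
Your arguments for part (1) and for part (2) in the smooth case are correct and are exactly the standard projection-formula / Grothendieck--Riemann--Roch computations; the paper simply cites \cite[Lemma 2.7]{Sun2} for these, so there is no genuine difference of method.

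Where you diverge from the paper is the singular case of part (2). You propose either a d\'evissage of $[\mathcal{Q}]$ in $\K(\mathcal{X}_W)_{\mathbb{Q}}$ by structure sheaves of subvarieties, or the Baum--Fulton--MacPherson form of Riemann--Roch, and you correctly flag this as the delicate point. The paper's route is shorter: since $\mathcal{X}_W$ is a normal surface, $\Sing\mathcal{X}_W$ is a finite set and hence has codimension $3$ in $\mathcal{X}$, so the restriction $A^{j}(\mathcal{X})\to A^{j}(\mathcal{X}\setminus\Sing\mathcal{X}_W)$ is an isomorphism for $j\le 2$. One then applies ordinary GRR to the closed embedding of smooth varieties $\mathcal{X}_W\setminus\Sing\mathcal{X}_W\hookrightarrow\mathcal{X}\setminus\Sing\mathcal{X}_W$ (this is how the paper's phrase ``GRR for $i_W|_{\mathcal{X}_W-\Sing\mathcal{X}_W}$'' should be read), and the identity for $j\le 2$ follows immediately. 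This avoids singular Riemann--Roch altogether and makes transparent why $j=3$ fails without smoothness: it is precisely the codimension-$3$ cycle class supported on $\Sing\mathcal{X}_W$ that the excision argument cannot control. Your approach would ultimately work, but the paper's is both simpler and more directly tied to the paper's own convention that $\ch_1$ on a fibre is defined via the smooth locus.
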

\begin{proof}
See \cite[Lemma 2.7]{Sun2} for part (1) and Part (2) in the case
that $\mathcal{X}_W$ is smooth. Part (2) for the case that
$\mathcal{X}_W$ is singular follows from applying the
Grothendieck-Riemann-Roch theorem for the embedding
$$i_W|_{\mathcal{X}_W-\Sing\mathcal{X}_W}:\mathcal{X}_W-\Sing\mathcal{X}_W\hookrightarrow
\mathcal{X}.$$
\end{proof}

\begin{definition}\label{tor}
Let $\mathcal{A}_C$ be the heart of a $C$-local $t$-structure on
$\D^b(\mathcal{X})$ (see \cite[Definition 4.10]{BLMNPS}), and let
$\mathcal{E}\in \mathcal{A}_C$.
\begin{enumerate}
\item We say $\mathcal{E}$ is $C$-flat if $\mathcal{E}_c\in \mathcal{A}_c$
for every point $c\in C$, where $\mathcal{A}_c$ is the heart of the
$t$-structure given by \cite[Theorem 5.3]{BLMNPS} applied to the
embedding $c\hookrightarrow C$.

\item An object $\mathcal{F}\in\D^b(\mathcal{X})$ is called $C$-torsion if
it is the pushforward of an object in $\D^b(\mathcal{X}_W)$ for some
proper closed subscheme $W\subset C$.

\item $\mathcal{E}$ is called $C$-torsion free if it
contains no nonzero $C$-torsion subobject.
\end{enumerate}
We denote by $\mathcal{A}_{C\text{-tor}}$ the subcategory of
$C$-torsion objects in $\mathcal{A}_C$, and by
$\mathcal{A}_{C\text{-tf}}$ the subcategory of $C$-torsion free
objects. We say $\mathcal{A}_C$ has a $C$-torsion theory if the pair
of subcategories $(\mathcal{A}_{C\text{-tor}},
\mathcal{A}_{C\text{-tf}})$ forms a torsion pair in the sense of
\cite[Definition 4.6]{BLMNPS}.
\end{definition}

\begin{lemma}\label{flat}
Let $\mathcal{E}\in \mathcal{A}_C$ be as in Definition \ref{tor}.
Then
\begin{enumerate}
\item $\mathcal{E}$ is $C$-flat if and only if $\mathcal{E}$ is
$C$-torsion free;
\item $\mathcal{E}$ is $C$-torsion if and only if
$\mathcal{E}_{K(C)}=0$.
\end{enumerate}
\end{lemma}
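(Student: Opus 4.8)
The plan is to treat both statements via the observation that, for a closed point $c\in C$, the fiber $\mathcal{X}_c=f^*c$ is an effective Cartier divisor on $\mathcal{X}$ whose line bundle $\mathcal{O}_{\mathcal{X}}(\mathcal{X}_c)=f^*\mathcal{O}_C(c)$ restricts trivially to $\mathcal{X}_c$. Writing $s_c$ for the section of $\mathcal{O}_{\mathcal{X}}(\mathcal{X}_c)$ cutting out $\mathcal{X}_c$, there is the standard exact triangle $\mathcal{E}(-\mathcal{X}_c)\xrightarrow{s_c}\mathcal{E}\to i_{c*}\mathcal{E}_c\xrightarrow{+1}$ for every $\mathcal{E}\in\D^b(\mathcal{X})$. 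Since $\mathcal{A}_C$ is the heart of a $C$-local $t$-structure, it is stable under $\otimes f^*\mathcal{O}_C(\pm c)$, so $\mathcal{E},\mathcal{E}(-\mathcal{X}_c)\in\mathcal{A}_C$; moreover, by \cite[Theorem 5.3]{BLMNPS}, the fiber heart $\mathcal{A}_c$ is such that $\mathcal{E}_c$ has $\mathcal{A}_c$-cohomology concentrated in degrees $-1$ and $0$ and $i_{c*}$ is $t$-exact for $(\mathcal{A}_c,\mathcal{A}_C)$. Taking $\mathcal{A}_C$-cohomology of the rotated triangle $i_{c*}\mathcal{E}_c[-1]\to\mathcal{E}(-\mathcal{X}_c)\xrightarrow{s_c}\mathcal{E}\to i_{c*}\mathcal{E}_c$ then produces, for every $\mathcal{E}\in\mathcal{A}_C$, a four-term exact sequence in $\mathcal{A}_C$
\begin{equation*}
0\longrightarrow i_{c*}\mathcal{H}^{-1}_{\mathcal{A}_c}(\mathcal{E}_c)\longrightarrow\mathcal{E}(-\mathcal{X}_c)\xrightarrow{\ s_c\ }\mathcal{E}\longrightarrow i_{c*}\mathcal{H}^{0}_{\mathcal{A}_c}(\mathcal{E}_c)\longrightarrow 0 .
\end{equation*}
Hence $\mathcal{E}_c\in\mathcal{A}_c$ if and only if $s_c\colon\mathcal{E}(-\mathcal{X}_c)\to\mathcal{E}$ is a monomorphism in $\mathcal{A}_C$, and in that case, twisting by $\mathcal{O}_{\mathcal{X}}(\mathcal{X}_c)$ and composing, $s_c^{\,n}\colon\mathcal{E}\to\mathcal{E}(n\mathcal{X}_c)$ is a monomorphism for all $n\ge 1$.

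To prove part (1), first assume $\mathcal{E}$ is $C$-flat and let $\mathcal{T}\hookrightarrow\mathcal{E}$ be a $C$-torsion subobject, $\mathcal{T}=i_{W*}\mathcal{G}$ with $W\subsetneq C$. A proper closed subscheme of the curve $C$ is a finite disjoint union of spectra of Artinian local rings, so $\mathcal{T}$ decomposes as a finite direct sum of subobjects of $\mathcal{E}$, each a pushforward from a subscheme supported at a single closed point; we may therefore assume $W\subseteq nc$ for one closed point $c$ and some $n\ge 1$. Then $s_c^{\,n}$ vanishes on $\mathcal{X}_{nc}\supseteq\mathcal{X}_W$, so multiplication by $s_c^{\,n}$ annihilates $\mathcal{T}$; by naturality the composite $\mathcal{T}\hookrightarrow\mathcal{E}\xrightarrow{\,s_c^{\,n}\,}\mathcal{E}(n\mathcal{X}_c)$ factors as $\mathcal{T}\xrightarrow{\,0\,}\mathcal{T}(n\mathcal{X}_c)\hookrightarrow\mathcal{E}(n\mathcal{X}_c)$, hence is zero, while on the other hand it is a composite of monomorphisms (using $C$-flatness of $\mathcal{E}$), hence a monomorphism; therefore $\mathcal{T}=0$. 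Conversely, assume $\mathcal{E}$ is $C$-torsion free. Then so is $\mathcal{E}(-\mathcal{X}_c)$, since $\otimes f^*\mathcal{O}_C(-c)$ restricts to an autoequivalence of $\mathcal{A}_C$ preserving the subcategory of $C$-torsion objects, hence also that of $C$-torsion free objects. The subobject $i_{c*}\mathcal{H}^{-1}_{\mathcal{A}_c}(\mathcal{E}_c)=\ker\!\big(s_c\colon\mathcal{E}(-\mathcal{X}_c)\to\mathcal{E}\big)$ of $\mathcal{E}(-\mathcal{X}_c)$ is $C$-torsion, hence zero; since $i_{c*}$ is faithful, $\mathcal{H}^{-1}_{\mathcal{A}_c}(\mathcal{E}_c)=0$, i.e.\ $\mathcal{E}_c\in\mathcal{A}_c$, for every $c\in C$, so $\mathcal{E}$ is $C$-flat.

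For part (2), if $\mathcal{E}=i_{W*}\mathcal{G}$ is $C$-torsion with $W\subsetneq C$, then $\mathcal{X}_W\times_C\spec K(C)=\varnothing$ because $\mathcal{O}_W\otimes_{\mathcal{O}_C}K(C)=0$, so flat base change along the generic point gives $\mathcal{E}_{K(C)}=0$. Conversely, assume $\mathcal{E}_{K(C)}=0$. Since the inclusion of the generic fiber is flat, each standard cohomology sheaf satisfies $\mathcal{H}^i(\mathcal{E})|_{\mathcal{X}_{K(C)}}=0$, so $f(\Supp\mathcal{H}^i(\mathcal{E}))$ is a closed subset of $C$ not containing the generic point, hence a finite set of closed points; let $W\subsetneq C$ be the reduced union of these over the finitely many $i$ with $\mathcal{H}^i(\mathcal{E})\neq 0$. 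Then the cohomology of $\mathcal{E}$ is set-theoretically supported on $\mathcal{X}_W$, and a standard d\'evissage --- using that each $\mathcal{X}_{mW}\subset\mathcal{X}$ is a divisor, so $R\mathcal{H}om_{\mathcal{X}}(\mathcal{O}_{\mathcal{X}_{mW}},\mathcal{E})\to\mathcal{E}$ is an isomorphism for $m\gg 0$ with source the pushforward of a bounded complex on $\mathcal{X}_{mW}$ --- gives $\mathcal{E}\cong i_{mW*}\mathcal{S}$ for some $\mathcal{S}\in\D^b(\mathcal{X}_{mW})$; thus $\mathcal{E}$ is $C$-torsion.

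I expect the step requiring the most care to be the construction and exactness of the four-term sequence in $\mathcal{A}_C$, as this is where the hypothesis that the $t$-structure is $C$-local enters essentially: one must extract from \cite[Theorem 5.3]{BLMNPS} and its proof that derived restriction to a fiber has $\mathcal{A}_c$-cohomological amplitude exactly $[-1,0]$ on $\mathcal{A}_C$ and that $i_{c*}$ is $t$-exact for $(\mathcal{A}_c,\mathcal{A}_C)$. Once that is in place, both equivalences are essentially formal, and the d\'evissage in part (2) is routine.
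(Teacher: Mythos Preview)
Your proposal is correct. The paper itself does not give a proof but simply cites \cite[Lemma 6.12 and Lemma 6.4]{BLMNPS}; the argument you have written---the four-term exact sequence in $\mathcal{A}_C$ arising from the Cartier divisor triangle $\mathcal{E}(-\mathcal{X}_c)\to\mathcal{E}\to i_{c*}\mathcal{E}_c$ for part (1), and the support-plus-d\'evissage argument for part (2)---is essentially the proof given in those cited lemmas, so you have reconstructed the referenced argument rather than found a different one.
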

\begin{proof}
See \cite[Lemma 6.12 and Lemma 6.4]{BLMNPS}.
\end{proof}

Since  $\Coh(\mathcal{X})$ is the heart of the natural $C$-local
$t$-structure on $\D^b(\mathcal{X})$, one can applies the above
definition and lemma to coherent sheaves. The following lemma shows
the relation of the relative slope-stability to the slop-stability.

\begin{lemma}\label{f-stable}
Let $\mathcal{E}$ be a $C$-torsion free sheaf on $\mathcal{X}$. Then
$\mathcal{E}$ is $\mu_{H, F}$-(semi)stable if and only if there
exists an open subset $U\subset C$ such that $\mathcal{E}_{s}$ is
$\mu_{H_s}$-(semi)stable for any point $s\in U$.
\end{lemma}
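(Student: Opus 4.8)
\emph{Plan.} The idea is to pass to the generic fibre $\mathcal{X}_{K(C)}$, where relative slope-stability becomes ordinary slope-stability, and then transport between the generic fibre and a general closed fibre using a spreading-out argument and the openness of (semi)stability in flat families. Throughout I use the identity, noted in \S2, that $\mu_{H,F}(\mathcal{G})=\mu_{H_s}(\mathcal{G}_s)=\mu_{H_{K(C)}}(\mathcal{G}_{K(C)})$ for any coherent sheaf $\mathcal{G}$ on $\mathcal{X}$ that is flat over $C$; it comes from Lemma \ref{Chern}(1), since $HF\ch_1(\mathcal{G})=H_s\ch_1(\mathcal{G}_s)$ and $H^2F\ch_0(\mathcal{G})=H_s^2\ch_0(\mathcal{G}_s)$. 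One may also assume $\mathcal{E}$ is torsion free: a $C$-torsion free sheaf with nonzero torsion has that torsion supported on a horizontal subvariety, giving a subsheaf of relative slope $+\infty$ (so $\mathcal{E}$ is not $\mu_{H,F}$-semistable) whose restriction to a general fibre is a nonzero torsion subsheaf of $\mathcal{E}_s$ (so $\mathcal{E}_s$ is not $\mu_{H_s}$-semistable), and both sides of the equivalence fail.

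\emph{The direction $(\Leftarrow)$.} Assume $\mathcal{E}_s$ is $\mu_{H_s}$-(semi)stable for all $s$ in an open $U\subset C$. By the usual see-saw property it suffices to bound $\mu_{H,F}(\mathcal{F})$ for a nonzero proper subsheaf $\mathcal{F}\subset\mathcal{E}$; and since a $C$-torsion sheaf is pushed forward from finitely many fibres, for which $F^2H=0$ and $\ch_0=0$, replacing $\mathcal{F}$ by its saturation (the preimage in $\mathcal{E}$ of the $C$-torsion part of $\mathcal{E}/\mathcal{F}$) changes neither $\ch_0$ nor $HF\ch_1$. So we may assume $\mathcal{E}/\mathcal{F}$ is $C$-torsion free; then $\mathcal{F}$ and $\mathcal{E}/\mathcal{F}$ are flat over $C$ (long exact Tor sequence and Lemma \ref{flat}), so $0\to\mathcal{F}_s\to\mathcal{E}_s\to(\mathcal{E}/\mathcal{F})_s\to0$ is exact for all $s$, with $\mathcal{F}_s$ and $(\mathcal{E}/\mathcal{F})_s$ nonzero for general $s\in U$ (Lemma \ref{flat}(2)). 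Hence $\mu_{H_s}(\mathcal{F}_s)<(\leq)\,\mu_{H_s}(\mathcal{E}_s)$, which by the slope identity is exactly $\mu_{H,F}(\mathcal{F})<(\leq)\,\mu_{H,F}(\mathcal{E})$.

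\emph{The direction $(\Rightarrow)$.} Assume $\mathcal{E}$ is $\mu_{H,F}$-(semi)stable. I first claim $\mathcal{E}_{K(C)}$ is $\mu_{H_{K(C)}}$-(semi)stable on $\mathcal{X}_{K(C)}$. If not, take $\mathcal{G}\subset\mathcal{E}_{K(C)}$ to be the maximal destabilizing subsheaf (the first Harder--Narasimhan factor, hence saturated), so $\mu_{H_{K(C)}}(\mathcal{G})\geq(>)\,\mu_{H_{K(C)}}(\mathcal{E}_{K(C)})$. Spreading out, $\mathcal{G}$ is the generic fibre of a subsheaf of $\mathcal{E}_V$ for some open $V\subset C$; its schematic image in $\mathcal{E}$, equivalently $\mathcal{G}':=\mathcal{E}\cap j_*\mathcal{G}$ with $j:\mathcal{X}_V\hookrightarrow\mathcal{X}$, is a $C$-torsion free subsheaf of $\mathcal{E}$ with $\mathcal{G}'_{K(C)}=\mathcal{G}$ and $\mathcal{E}/\mathcal{G}'$ also $C$-torsion free. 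Then $\mu_{H,F}(\mathcal{G}')=\mu_{H_{K(C)}}(\mathcal{G})\geq(>)\,\mu_{H_{K(C)}}(\mathcal{E}_{K(C)})=\mu_{H,F}(\mathcal{E})$, contradicting the (semi)stability of $\mathcal{E}$. Thus $\mathcal{E}_{K(C)}$ is $\mu_{H_{K(C)}}$-(semi)stable, and since $\mu$-(semi)stability is preserved under field extension in characteristic zero, the geometric generic fibre is too; as $\mathcal{E}$ is flat over $C$, the openness of (semi)stability in flat families (e.g. Huybrechts--Lehn, \emph{The geometry of moduli spaces of sheaves}) then yields an open $U\subset C$ with $\mathcal{E}_s$ $\mu_{H_s}$-(semi)stable for all $s\in U$.

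\emph{Main obstacle.} The genuinely nontrivial inputs are the spreading-out of the generic destabilizing subsheaf so that it arises from an actual subsheaf of $\mathcal{E}$ over $\mathcal{X}$ with the correct relative slope, and the openness of (semi)stability over the curve; the rest is bookkeeping with relative versus fibrewise Chern characters recorded in Lemma \ref{Chern}. I expect the extension/saturation step of $(\Rightarrow)$ to be where care is needed, precisely to guarantee $\mathcal{E}/\mathcal{G}'$ is $C$-torsion free so that the slope identity $\mu_{H,F}(\mathcal{G}')=\mu_{H_{K(C)}}(\mathcal{G})$ applies.
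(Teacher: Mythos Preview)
The paper does not give its own proof here; it simply cites \cite[Lemma 2.4]{Sun2}. Your argument---reduce to the generic fibre, spread out a would-be destabilizing subsheaf, and invoke openness of $\mu$-(semi)stability in flat families---is the standard one and is correct; it is almost certainly what the cited reference does as well.

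One small clarification on your ``Main obstacle'' paragraph: the slope identity $\mu_{H,F}(\mathcal{G}')=\mu_{H_{K(C)}}(\mathcal{G})$ only requires $\mathcal{G}'$ to be $C$-flat, and this is automatic since $\mathcal{G}'$ is a subsheaf of the $C$-torsion free sheaf $\mathcal{E}$. You therefore do not actually need $\mathcal{E}/\mathcal{G}'$ to be $C$-torsion free for the $(\Rightarrow)$ direction to go through; that saturation step is harmless but not essential.
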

\begin{proof}
See \cite[Lemma 2.4]{Sun2}.
\end{proof}

We recall the classical Bogomolov inequality:
\begin{theorem}\label{Bog}
Assume that $\mathcal{E}$ is a $\mu_{H,F}$-semistable torsion free
sheaf on $\mathcal{X}$. Then we have
\begin{eqnarray*}
F\Delta(\mathcal{E})&:=&F\big(\ch_1^2(\mathcal{E})-2\ch_0(\mathcal{E})\ch_2(\mathcal{E})\big)\geq0;\\
H\Delta(\mathcal{E})&:=&H\big(\ch_1^2(\mathcal{E})-2\ch_0(\mathcal{E})\ch_2(\mathcal{E})\big)\geq0.
\end{eqnarray*}
\end{theorem}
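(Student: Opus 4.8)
The plan is to reduce both inequalities to sheaves on surfaces (for $F\Delta$) and to sheaves semistable with respect to nef divisors (for $H\Delta$), and in each case invoke the known form of the Bogomolov--Gieseker inequality. First note that a torsion free sheaf $\mathcal{E}$ on $\mathcal{X}$ is automatically $C$-torsion free: a $C$-torsion subobject is the pushforward of a sheaf from a proper closed subscheme $W\subset C$, hence a torsion sheaf on $\mathcal{X}$, which a torsion free sheaf cannot contain. By Lemma \ref{flat}, $\mathcal{E}$ is therefore $C$-flat, so $\mathcal{E}_s\in\Coh(\mathcal{X}_s)$ for every $s\in C$, and by Lemma \ref{f-stable} there is a dense open $U\subseteq C$ with $\mathcal{E}_s$ $\mu_{H_s}$-semistable for all $s\in U$. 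Shrinking $U$, I may also assume (generic smoothness of $f$ and generic flatness of $\mathcal{E}$ over $C$, in characteristic $0$) that $\mathcal{X}_s\cong F$ is smooth and $\mathcal{E}_s=\mathcal{E}|_F$ is torsion free for $s\in U$.

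For $F\Delta(\mathcal{E})\geq 0$: fix $s\in U$, so $\mathcal{E}|_F$ is a $\mu_{H_F}$-semistable torsion free sheaf on the smooth projective surface $F$. The classical Bogomolov inequality on surfaces gives $\Delta(\mathcal{E}|_F)=\ch_1(\mathcal{E}|_F)^2-2\ch_0(\mathcal{E}|_F)\ch_2(\mathcal{E}|_F)\geq 0$. Since $\mathcal{X}_s$ is a Cartier divisor numerically equivalent to $F$ and $\mathcal{E}$ is $\mathrm{Tor}$-independent from $\mathcal{O}_{\mathcal{X}_s}$ over $U$, we have $\ch_j(\mathcal{E}|_F)=i_s^{*}\ch_j(\mathcal{E})$, whence $\Delta(\mathcal{E}|_F)=F\big(\ch_1^2(\mathcal{E})-2\ch_0(\mathcal{E})\ch_2(\mathcal{E})\big)=F\Delta(\mathcal{E})$, and the inequality follows. (Equivalently, this is the case already recorded in \cite{BLMNPS} and \cite{Sun2}.)

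For $H\Delta(\mathcal{E})\geq 0$: restriction to a fiber is of no use here, since every fiber restriction reproduces $F\Delta(\mathcal{E})$, and $H$ itself is only nef and relatively ample, not ample. The key observation is that $\mu_{H,F}$-semistability is precisely $\mu$-semistability with respect to the collection of two nef divisors $(H,F)$ in the sense of Langer, because $\mu_{H,F}(\mathcal{E})$ is a fixed positive multiple of $HF\ch_1(\mathcal{E})/\ch_0(\mathcal{E})$ and $H^2F>0$. I would therefore invoke Langer's generalized Bogomolov inequality for sheaves semistable with respect to a collection of nef divisors, with the two divisors $H$ (nef by hypothesis) and $F$ (nef, being the pullback of a point on $C$), which yields $H\cdot\Delta(\mathcal{E})\geq 0$. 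Alternatively, one can argue by a limiting/restriction argument: for $A$ ample on $C$ and $t\gg 0$ the divisor $H+tf^{*}A$ is ample, the $\mu_{H+tf^{*}A}$-Harder--Narasimhan factors of $\mathcal{E}$ all have the same $\mu_{H,F}$-slope, and the standard discriminant decomposition together with the Hodge index theorem reduces $H\Delta(\mathcal{E})$ to the classical Bogomolov inequality applied to each factor.

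The main obstacle is this second inequality: one must work with a merely nef (not ample) divisor, and the nearby fibers of $f$ can a priori be singular, so the naive surface statement does not apply directly and one genuinely needs the sharper Langer-type form of the Bogomolov inequality. The rest is a routine reduction to fibers via Lemmas \ref{flat} and \ref{f-stable} together with compatibility of Chern characters under restriction (Lemma \ref{Chern}).
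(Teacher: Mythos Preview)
Your proposal is correct and matches the paper's approach: the paper's proof is simply a citation to Langer \cite[Theorem 3.2]{Langer1}, which is precisely the Bogomolov inequality for sheaves semistable with respect to a collection of nef divisors $(H,F)$ that you invoke for $H\Delta(\mathcal{E})\geq 0$, and which also yields $F\Delta(\mathcal{E})\geq 0$ directly. Your restriction-to-a-general-fiber argument for $F\Delta$ is a correct unpacking of that same result in the easier case, so the two routes coincide.
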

\begin{proof}
See \cite[Theorem 3.2]{Langer1}.
\end{proof}

Let $\beta$ be a real number. For brevity, we write $\ch^{\beta}$
for the twisted Chern character $\ch^{\beta H}$. A short calculation
shows
\begin{eqnarray*}
\Delta(\mathcal{E})&:=&(\ch_1(\mathcal{E}))^2-2\ch_0(\mathcal{E})\ch_2(\mathcal{E})\\
&=&(\ch^{\beta}_1(\mathcal{E}))^2-2\ch^{\beta}_0(\mathcal{E})\ch^{\beta}_2(\mathcal{E}).
\end{eqnarray*}

\begin{definition}
We define the generalized relative discriminants
$$\overline{\Delta}^{\beta
H}_{H,F}:=(HF\ch^{\beta}_1)^2-2H^{2}F\ch^{\beta}_0\cdot(F\ch^{\beta}_2)$$
and $$\widetilde{\Delta}^{\beta
H}_{H,F}:=(HF\ch_1^{\beta})(H^{2}\ch_1^{\beta})-H^{2}F\ch^{\beta}_0\cdot(H\ch^{\beta}_2).$$
\end{definition}
A short calculation shows $$\overline{\Delta}^{\beta
H}_{H,F}=(HF\ch_1)^2-2H^{2}F\ch_0\cdot(F\ch_2)=\overline{\Delta}_{H,F}.$$
Hence the first generalized relative discriminant
$\overline{\Delta}^{\beta H}_{H,F}$ is independent of $\beta$. In
general $\widetilde{\Delta}^{\beta H}_{H,F}$ is not independent of
$\beta$, but we have $$\widetilde{\Delta}^{\beta
H}_{H,F}(\mathcal{E}\otimes\mathcal{O}_{\mathcal{X}}(mF))=\widetilde{\Delta}^{\beta
H}_{H,F}(\mathcal{E}),$$ for any $\mathcal{E}\in\D^b(\mathcal{X})$
and $m\in\mathbb{Z}$.

\begin{theorem}\label{Bog1}
Assume that $\mathcal{E}$ is a $\mu_{H,F}$-semistable torsion free
sheaf on $\mathcal{X}$. Then we have $\overline{\Delta}^{\beta
H}_{H,F}(\mathcal{E})\geq0$ and $\widetilde{\Delta}^{\beta
H}_{H,F}(\mathcal{E})\geq0$.
\end{theorem}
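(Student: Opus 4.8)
The plan is to deduce the two inequalities $\overline{\Delta}^{\beta H}_{H,F}(\mathcal{E})\geq 0$ and $\widetilde{\Delta}^{\beta H}_{H,F}(\mathcal{E})\geq 0$ from the classical Bogomolov inequalities $F\Delta(\mathcal{E})\geq 0$ and $H\Delta(\mathcal{E})\geq 0$ of Theorem \ref{Bog}, by means of an elementary Hodge-index-type manipulation on the fiber class $F$ and the nef relative ample class $H$. First I would record that, since $\overline{\Delta}^{\beta H}_{H,F}=\overline{\Delta}_{H,F}$ is independent of $\beta$, it suffices to treat $\beta=0$; likewise, since $\widetilde{\Delta}^{\beta H}_{H,F}(\mathcal{E}\otimes\mathcal{O}_{\mathcal{X}}(mF))=\widetilde{\Delta}^{\beta H}_{H,F}(\mathcal{E})$ and tensoring by $\mathcal{O}_{\mathcal{X}}(mF)$ preserves $\mu_{H,F}$-semistability, one has room to normalize. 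The key algebraic input is that on a threefold the intersection form, restricted to the two-dimensional span relevant here, obeys a Hodge-index inequality: for the surface class given by a general fiber $F$ (where $H|_F$ is ample) one has, for any divisor class $D$,
\[
(H|_F\cdot D)^2\;\geq\;(H|_F^2)\,(D^2|_F),
\]
and similarly, using that $H$ is nef, $(H^2 F)(H \xi^2)\le (H^2\xi)^2$-type relations hold for the appropriate classes $\xi=\ch_1^{\beta}(\mathcal{E})$.

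The main steps, in order, are as follows. (1) Reduce to $\mathcal{E}$ $\mu_{H,F}$-semistable torsion free, and restrict to a general fiber: by Lemma \ref{f-stable}, $\mathcal{E}_s$ is $\mu_{H_s}$-semistable on $\mathcal{X}_s$ for $s$ in a dense open $U\subset C$, so the classical Bogomolov inequality on the (normal, hence with controlled singularities) fiber surface gives $(H_s\ch_1^{\beta}(\mathcal{E}_s))^2\geq (H_s^2)\cdot 2\ch_0(\mathcal{E}_s)\ch_2^{\beta}(\mathcal{E}_s)$; translating via $\ch_j^{\beta}(\mathcal{E}_s)=F\ch_{j}^{\beta}(\mathcal{E})$-type identities (Lemma \ref{Chern}) yields exactly $\overline{\Delta}^{\beta H}_{H,F}(\mathcal{E})\geq 0$. (2) For $\widetilde{\Delta}^{\beta H}_{H,F}$, set $\xi=\ch_1^{\beta}(\mathcal{E})\in\NS(\mathcal{X})_{\mathbb{R}}$ and $r=\ch_0(\mathcal{E})$; expand
\[
\widetilde{\Delta}^{\beta H}_{H,F}(\mathcal{E})=(HF\xi)(H^2\xi)-H^2F\,r\,(H\ch_2^{\beta}(\mathcal{E})),
\]
and rewrite $H\ch_2^{\beta}(\mathcal{E})=\tfrac12\big(H\xi^2-H\Delta(\mathcal{E})\big)/r$ when $r\neq 0$ (handling $r=0$ separately using the torsion-free hypothesis, which forces $r>0$). (3) Invoke $H\Delta(\mathcal{E})\geq 0$ from Theorem \ref{Bog} to discard that term, reducing the claim to the purely numerical inequality $(HF\xi)(H^2\xi)\geq \tfrac12(H^2F)(H\xi^2)$ — this is where the Hodge index theorem for the nef class $H$ enters, applied to the class $\xi$ against the pencil spanned by $H$ and $F$; I would phrase it as positivity of a $2\times2$ Gram-type determinant built from $H^3, H^2F, HF\xi, H^2\xi, H\xi^2$, using that $H$ is nef and $F$ is a nef fiber class with $F^2=0$.

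The main obstacle I anticipate is step (3): the naive Hodge-index estimate is not quite formatted for a mixed pair $(H,F)$ with $F^2=0$ and $HF$ only semipositive, so some care is needed to get the constant $\tfrac12\cdot\tfrac{H^3}{H^2F}$ (rather than a weaker one) and to ensure the inequality goes in the right direction; in particular one must combine the fiberwise Bogomolov bound (step 1) with the global $H$-Bogomolov bound, and possibly use the relative Harder-Narasimhan filtration to reduce to the semistable-on-the-generic-fiber case cleanly. A secondary technical point is the singular fibers: since only general fibers are smooth, the restriction argument must be run over the open locus $U$ where fibers are smooth (or where $\mathcal{E}_s$ remains semistable), which is exactly what Lemma \ref{f-stable} and the codimension-$\geq 2$ bound on $\Sing\mathcal{X}_s$ provide. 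Once the numerical inequality is in hand, both conclusions follow by direct substitution, and the $\beta$-independence / $mF$-twist invariance already noted let us transport the $\beta=0$ case to all $\beta$.
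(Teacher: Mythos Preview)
The paper does not give its own proof here: it simply cites \cite[Theorem 3.10]{Sun2}. So there is nothing to compare against directly, and your outline must be judged on its own merits. The strategy --- reduce each generalized discriminant to the classical Bogomolov inequality of Theorem~\ref{Bog} plus a purely numerical Hodge-index-type estimate --- is the right one and can be made to work, but as written both of your ``Bogomolov'' steps silently absorb a Hodge-index step, and your acknowledged obstacle (step~(3)) is left unresolved rather than proved.

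For $\overline{\Delta}_{H,F}$: the Bogomolov inequality on the smooth fiber $\mathcal{X}_s$ gives $\ch_1(\mathcal{E}_s)^2\geq 2\ch_0(\mathcal{E}_s)\ch_2(\mathcal{E}_s)$, i.e.\ $F\Delta(\mathcal{E})\geq 0$. It does \emph{not} directly give $(H_s\ch_1(\mathcal{E}_s))^2\geq H_s^2\cdot 2\ch_0\ch_2$. What you need is the additional Hodge-index inequality $(H_s\ch_1)^2\geq H_s^2\cdot \ch_1^2$ on the fiber; combining the two yields $\overline{\Delta}_{H,F}=(HF\ch_1)^2-(H^2F)(F\ch_1^2)+(H^2F)(F\Delta)\geq 0$. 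This is easy, but should be said.

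For $\widetilde{\Delta}^{\beta H}_{H,F}$: your reduction to $2(HF\xi)(H^2\xi)\geq (H^2F)(H\xi^2)$ is correct, but this is not of the form $(H^2\xi)^2\geq (H^2F)(H\xi^2)$ you allude to, and you stop short of proving it. Here is how to close the gap. Perturb $H$ to an ample class (the inequality is closed, so a limit recovers the nef case), take a smooth $Y\in|mH|$, and set $h=H|_Y$, $f=F|_Y$, $x=\xi|_Y$. Then $f^2=0$ and $hf>0$, so $\{h,f\}$ span a nondegenerate plane; write $x=\alpha h+\beta f+\gamma$ with $\gamma h=\gamma f=0$. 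A direct computation gives
\[
2(fx)(hx)-(hf)(x^2)=hf\bigl(\alpha^2 h^2-\gamma^2\bigr),
\]
and the right side is $\geq 0$ since $hf>0$, $h^2>0$, and $\gamma^2\leq 0$ by the Hodge index theorem on $Y$. Translating back via $D_1|_Y\cdot D_2|_Y=H D_1 D_2$ gives exactly the inequality you need. With this in hand, your step~(2) plus $H\Delta(\mathcal{E})\geq 0$ finishes the proof.
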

\begin{proof}
See \cite[Theorem 3.10]{Sun2}.
\end{proof}

\begin{definition}
Let $p$ be a point of $C$. We say the Bogomolov inequality holds on
the fiber $\mathcal{X}_p$, if
$$(H\ch_2(i_{p*}E))^2\geq2H^2\ch_1(i_{p*}E))\ch_3(i_{p*}E))$$ for any
$\mu_{H_p}$-semistable sheaf $E\in \Coh(\mathcal{X}_p)$.
\end{definition}
A short computation shows
\begin{eqnarray*}
&&(H\ch_2(i_{p*}E))^2-2H^2\ch_1(i_{p*}E))\ch_3(i_{p*}E))\\
&=&(H\ch^{\beta}_2(i_{p*}E))^2-2H^2\ch^{\beta}_1(i_{p*}E))\ch^{\beta}_3(i_{p*}E)).
\end{eqnarray*}
By Lemma \ref{Chern}, one sees
\begin{eqnarray*}
&&(H\ch_2(i_{p*}E))^2-2H^2\ch_1(i_{p*}E))\ch_3(i_{p*}E))\\
&=&(H_p\ch_{\mathcal{X}_p, 1}(E))^2-2H^2_p\ch_{\mathcal{X}_p,
0}(E)\ch_{\mathcal{X}_p, 2}(E)
\end{eqnarray*}
if $\mathcal{X}_p$ is smooth. Hence the Bogomolov inequality holds
on every smooth fiber of $f$.

Another important notion of stability for a sheaf on a fibration is
the stability introduced in \cite[Example 15.3]{BLMNPS}. We define
the slope $\mu_{C}$ of a coherent sheaf $\mathcal{E}\in
\Coh(\mathcal{X})$ by

\begin{eqnarray*}
\mu_{C}(\mathcal{E})= \left\{
\begin{array}{ll}
\frac{HF\ch_1(\mathcal{E})}{H^{2}F\ch_0(\mathcal{E})}, &\mbox{if}~\ch_0(\mathcal{E})\neq0,\\
\frac{H\ch_2(\mathcal{E})}{H^{2}\ch_1(\mathcal{E})},&\mbox{if}~\mathcal{E}_{K(C)}=0~\mbox{and}~H^{2}\ch_1(\mathcal{E})\neq0,\\
+\infty, &\mbox{otherwise}.
\end{array}\right.
\end{eqnarray*}
We can define $\mu_C$-stability as in Definition \ref{slope}.

\begin{definition}
A coherent sheaf $\mathcal{E}$ on $\mathcal{X}$ is
$\mu_{C}$-(semi)stable if, for all non-zero subsheaves
$\mathcal{F}\hookrightarrow \mathcal{E}$, we have
$$\mu_{C}(\mathcal{F})<(\leq)\mu_{C}(\mathcal{E}/\mathcal{F}).$$
\end{definition}

Like the slope-stability and the relative slope-stability, the
$\mu_{C}$-stability also satisfies the following weak see-saw
property and the Harder-Narasimhan property (see \cite[Proposition
16.6]{BLMNPS} and \cite[Proposition 2.6]{Sun2}).
\begin{proposition}
Let $\mathcal{E}\in\Coh(\mathcal{X})$ be a non-zero sheaf.
\begin{enumerate}
\item For any short exact sequence $$0\rightarrow \mathcal{F}\rightarrow \mathcal{E}\rightarrow
\mathcal{G}\rightarrow0$$ in $\Coh(\mathcal{X})$, we have
$$\mu_{C}(\mathcal{F})\leq\mu_{C}(\mathcal{E})\leq\mu_{C}(\mathcal{G})~\mbox{or}~\mu_{C}(\mathcal{F})\geq\mu_{C}(\mathcal{E})\geq\mu_{C}(\mathcal{G}).$$

\item There is a filtration (called Harder-Narasimhan filtration)
$$0=\mathcal{E}_0\subset \mathcal{E}_1\subset\cdots\subset \mathcal{E}_m=\mathcal{E}$$
such that: $\mathcal{G}_i:=\mathcal{E}_i/\mathcal{E}_{i-1}$ is
$\mu_{C}$-semistable, and
$\mu_{C}(\mathcal{G}_1)>\cdots>\mu_{C}(\mathcal{G}_m)$. We write
$\mu^+_{C}(\mathcal{E}):=\mu_{C}(\mathcal{G}_1)$ and
$\mu^-_{C}(\mathcal{E}):=\mu_{C}(\mathcal{G}_m)$.
\end{enumerate}
\end{proposition}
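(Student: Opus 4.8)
The plan is to regard $\mu_C$ as a slope function obtained by gluing the relative slope $\mu_{H,F}$ (on sheaves of positive rank) with the ``fibrewise'' slope $H\ch_2/H^2\ch_1$ (on $C$-torsion sheaves) and to deduce both properties from the elementary behaviour of such slopes once the relevant positivity is in place — exactly the pattern of the proof for $\mu_{H,F}$ in \cite{Sun2}. The one genuinely geometric ingredient I would isolate first is a \emph{positivity hierarchy} for a sheaf $\mathcal{E}\in\Coh(\mathcal{X})$: one always has $H^2F\ch_0(\mathcal{E})\geq0$; if $\ch_0(\mathcal{E})=0$ then $\ch_1(\mathcal{E})$ is an effective divisor class so $HF\ch_1(\mathcal{E})\geq0$ (as $HF$ pairs non-negatively with effective divisors), and this vanishes on every $C$-torsion sheaf; if $\mathcal{E}$ is $C$-torsion then, writing it as an iterated extension of push-forwards from fibres and applying Lemma \ref{Chern}, one gets $H^2\ch_1(\mathcal{E})\geq0$, with equality exactly when $\dim\Supp\mathcal{E}\leq1$; and in the latter case $H\ch_2(\mathcal{E})\geq0$. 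For singular fibres one uses the Grothendieck--Riemann--Roch form of Lemma \ref{Chern}(2); only the inequalities are needed, and they survive the correction terms since $H_s$ is ample on each fibre $\mathcal{X}_s$. I would also record that the $C$-torsion sheaves form a Serre subcategory of $\Coh(\mathcal{X})$, and that inside it the sheaves of support dimension $\leq1$ do as well — both immediate from Lemma \ref{flat}(2) and the exactness of restriction to the generic fibre.

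For part (1), I would run a short case analysis on $0\to\mathcal{F}\to\mathcal{E}\to\mathcal{G}\to0$, using additivity of $\ch_0$, $HF\ch_1$, $H^2\ch_1$, $H\ch_2$ together with the hierarchy above. If $\mathcal{E}$ is $C$-torsion, so are $\mathcal{F}$ and $\mathcal{G}$, and on this subcategory $\mu_C$ is the slope of the pair $(H^2\ch_1,H\ch_2)$ whose first entry is $\geq0$ and whose second entry is $\geq0$ wherever the first vanishes; the weak see-saw is then the standard mediant inequality, the value $+\infty$ occurring for all three members only when $H^2\ch_1=H\ch_2=0$ throughout. If $\mathcal{E}$ is not $C$-torsion, then at most one of $\mathcal{F},\mathcal{G}$ is; when both are $C$-torsion free of positive rank one has $\mu_C=\mu_{H,F}$ everywhere and invokes the see-saw for $\mu_{H,F}$; every remaining configuration involves a sub- or quotient sheaf of rank $0$, which has $\mu_C=+\infty$, and then the inequality $HF\ch_1\geq0$ for that sheaf forces $\mu_{H,F}(\mathcal{E})$ to lie on the correct side of $\mu_{H,F}$ of the positive-rank member, so the chain holds (with equality in the degenerate sub-cases).

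For part (2), I would invoke the standard Harder--Narasimhan machinery: on the Noetherian abelian category $\Coh(\mathcal{X})$, a slope function satisfying the weak see-saw property admits unique Harder--Narasimhan filtrations once, for each fixed $\mathcal{E}$, the slopes $\mu_C(\mathcal{F})$ over nonzero subsheaves $\mathcal{F}\subseteq\mathcal{E}$ are bounded above and admit no infinite strictly increasing chain — precisely the input used for $\mu_{H,F}$ and for $\mu_{H_s}$. This boundedness is inherited from the two pieces: for $C$-torsion subsheaves of $\mathcal{E}$ (all contained in the maximal $C$-torsion subsheaf of $\mathcal{E}$) it is the fibrewise boundedness of $H\ch_2$ for bounded $H^2\ch_1$, and for positive-rank quotients it is Grothendieck's boundedness of slopes; the usual construction of the maximal destabilizing subsheaf then produces $\mathcal{E}_1$, and one concludes by induction on length.

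The main obstacle is the first paragraph: establishing the positivity hierarchy cleanly, in particular handling singular fibres via Lemma \ref{Chern} and checking that $H^2\ch_1$ vanishes on a $C$-torsion sheaf exactly in support dimension $\leq1$, since this is the only place geometry enters; after that the see-saw is pure bookkeeping and the Harder--Narasimhan property is formal. A secondary nuisance is that $\mu_C$ is \emph{not} a lexicographic refinement of a single slope — a $C$-torsion sheaf and a positive-rank sheaf can share the same finite $\mu_C$-value — so one cannot simply peel off the torsion part, and the positivity of $HF\ch_1$ must be kept in play in the mixed cases of the see-saw.
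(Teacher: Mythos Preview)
The paper does not prove this proposition at all: it simply records the statement and cites \cite[Proposition~16.6]{BLMNPS} and \cite[Proposition~2.6]{Sun2}. So by necessity your route is different --- you are reconstructing by hand what the paper imports from the general machinery of stability in families.

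Your positivity hierarchy and the case analysis for part~(1) are essentially correct, but one sentence is wrong as written: in the mixed case ($\mathcal{E}$ not $C$-torsion, one of $\mathcal{F},\mathcal{G}$ of rank~$0$) you assert that the rank-$0$ piece ``has $\mu_C=+\infty$''. That fails precisely for a $C$-torsion sub- or quotient sheaf with $H^2\ch_1>0$, which has finite $\mu_C$ --- the very phenomenon you flag in your last paragraph. The repair is immediate (and you almost say it): such a sheaf has $HF\ch_1=0$, so the two positive-rank members of the sequence share the same $\mu_{H,F}$-value, whence $\mu_C(\mathcal{E})$ equals $\mu_C$ of the other piece and the chain holds with equality on one side regardless of the finite value of $\mu_C$ on the $C$-torsion piece.

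For part~(2) there is a more genuine gap. Your appeal to ``standard Harder--Narasimhan machinery'' presupposes that $\mu_C$ behaves like a weak stability function in the sense of \cite{BMT,BMS}: the objects with slope $+\infty$ should form a torsion class, so that one may pass to the quotient and run the usual maximal-destabiliser argument there. Here they do not. For instance, $\mathcal{O}_S$ for a horizontal surface $S\subset\mathcal{X}$ has $\mu_C=+\infty$, yet it can surject onto a $C$-torsion sheaf supported on an entire fibre, which has finite $\mu_C$; and a direct sum of such an $\mathcal{O}_S$ with a structure sheaf of a fibre has $\mu_C=+\infty$ but is not $\mu_C$-semistable. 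In particular, ``take the maximal subsheaf achieving the supremum of $\mu_C$'' does not produce a semistable subobject, and boundedness of slopes over subsheaves (your stated hypothesis) is not enough by itself. The BLMNPS framework handles this via their Harder--Narasimhan structures over a base, which is why the paper is content to cite. If you want a self-contained argument, the cleanest route is: first take the $C$-torsion subsheaf $\mathcal{T}\subset\mathcal{E}$, build the HN filtration of $\mathcal{T}$ using the fibrewise slope $H\ch_2/H^2\ch_1$, then build the HN filtration of the $C$-flat quotient $\mathcal{E}/\mathcal{T}$ (where the relevant destabilisers are controlled by $\mu_{H,F}$ on the generic fibre together with the condition in Proposition~\ref{pro2.11}), and finally verify that splicing the two filtrations at $\mathcal{T}$ gives strictly decreasing $\mu_C$ --- which uses that the last factor of the first filtration is $C$-torsion while the first factor of the second is $C$-torsion free of the maximal $\mu_{H,F}$-slope. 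This is exactly the structure of the argument in \cite[\S16]{BLMNPS}, specialised to $\Coh(\mathcal{X})$.
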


Lemma \ref{f-stable} says that the usual notion of relative
slope-stability for a torsion free sheaf is equivalent to the
slope-stability of the general fiber of the sheaf. In contrast,
$\mu_C$-stability requires stability for all fibers:
\begin{proposition}\label{pro2.11}
Let $\mathcal{E}$ be a $C$-torsion free sheaf on $\mathcal{X}$. Then
$\mathcal{E}$ is $\mu_C$-semistable if and only if $\mathcal{E}$ is
$\mu_{H,F}$-semistable and for any closed point $p\in C$ and any
quotient $\mathcal{E}_p\twoheadrightarrow \mathcal{Q}$ in
$\Coh(\mathcal{X}_p)$ we have
$\mu_{H_p}(\mathcal{E}_p)\leq\mu_{H_p}(\mathcal{Q})$.
\end{proposition}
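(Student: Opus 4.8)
\textit{Proof plan.} The plan is to establish the two implications directly from the definitions, the one genuinely non-formal step being a filtration argument in a single case. I first record a computation: for a closed point $p\in C$ and $\mathcal{Q}\in\Coh(\mathcal{X}_p)$, Lemma \ref{Chern}(2) gives $\ch_0(i_{p*}\mathcal{Q})=0$, $(i_{p*}\mathcal{Q})_{K(C)}=0$, $H^2\ch_1(i_{p*}\mathcal{Q})=H^2F\ch_0(\mathcal{Q})\ge0$ and $H\ch_2(i_{p*}\mathcal{Q})=H_p\ch_1(\mathcal{Q})$, whence $\mu_C(i_{p*}\mathcal{Q})=\mu_{H_p}(\mathcal{Q})$ if $\ch_0(\mathcal{Q})>0$ and $\mu_C(i_{p*}\mathcal{Q})=+\infty$ if $\ch_0(\mathcal{Q})=0$. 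One may assume $\ch_0(\mathcal{E})>0$: if $\ch_0(\mathcal{E})=0$ then $\mathcal{E}$, being $C$-torsion free of rank $0$, is supported on a horizontal curve, and both conditions in the statement hold automatically (all the slopes that occur equal $+\infty$). Under this assumption a nonzero horizontal torsion subsheaf of $\mathcal{E}$ would contradict $\mu_C$- (resp. $\mu_{H,F}$-) semistability, so in either direction $\mathcal{E}$ is torsion free; hence every nonzero subsheaf $\mathcal{F}\subseteq\mathcal{E}$ has positive rank with $\mu_C(\mathcal{F})=\mu_{H,F}(\mathcal{F})$, $\mu_C(\mathcal{E})=\mu_{H,F}(\mathcal{E})$, and $\mu_C(\mathcal{G})=\mu_{H,F}(\mathcal{G})$ for every quotient $\mathcal{G}$ with $\ch_0(\mathcal{G})>0$. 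I also use repeatedly the identity $\mu_{H,F}(\mathcal{E})=\mu_{H_p}(\mathcal{E}_p)$ recorded above.

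Suppose $\mathcal{E}$ is $\mu_C$-semistable. For a nonzero $\mathcal{F}\subset\mathcal{E}$, combining $\mu_C(\mathcal{F})\le\mu_C(\mathcal{E}/\mathcal{F})$ with the weak see-saw property yields $\mu_C(\mathcal{F})\le\mu_C(\mathcal{E})$, i.e. $\mu_{H,F}(\mathcal{F})\le\mu_{H,F}(\mathcal{E})$; thus $\mathcal{E}$ is $\mu_{H,F}$-semistable. For the fibre condition, fix $p$ and a quotient $\mathcal{E}_p\twoheadrightarrow\mathcal{Q}$, which we may assume has $\ch_0(\mathcal{Q})>0$. Composing the canonical surjection $\mathcal{E}\twoheadrightarrow\mathcal{E}/\mathcal{I}_{\mathcal{X}_p}\mathcal{E}=i_{p*}\mathcal{E}_p$ with the pushforward of $\mathcal{E}_p\twoheadrightarrow\mathcal{Q}$ exhibits $i_{p*}\mathcal{Q}$ as a quotient of $\mathcal{E}$; applying $\mu_C$-semistability and the weak see-saw to the resulting short exact sequence gives $\mu_C(\mathcal{E})\le\mu_C(i_{p*}\mathcal{Q})$, which by the first paragraph reads $\mu_{H_p}(\mathcal{E}_p)\le\mu_{H_p}(\mathcal{Q})$.

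Conversely, suppose $\mathcal{E}$ is $\mu_{H,F}$-semistable and satisfies the fibre condition, and let $\mathcal{F}\subset\mathcal{E}$ be nonzero with $\mathcal{G}=\mathcal{E}/\mathcal{F}$. If $\ch_0(\mathcal{G})>0$ then $\mu_C(\mathcal{F})=\mu_{H,F}(\mathcal{F})\le\mu_{H,F}(\mathcal{G})=\mu_C(\mathcal{G})$. If $\ch_0(\mathcal{G})=0$ and $\mathcal{G}_{K(C)}\neq0$ then $\mu_C(\mathcal{G})=+\infty$. In the remaining case $\mathcal{G}$ is $C$-torsion by Lemma \ref{flat}(2); its first Chern class is then a sum of fibre classes, so $HF\ch_1(\mathcal{G})=0$ (using $F^{2}=0$) and hence $\mu_C(\mathcal{F})=\mu_{H,F}(\mathcal{F})=\mu_{H,F}(\mathcal{E})$, so it remains to prove $\mu_C(\mathcal{G})\ge\mu_{H,F}(\mathcal{E})$. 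We may assume $H^2\ch_1(\mathcal{G})>0$, else $\mu_C(\mathcal{G})=+\infty$. Let $W=(\Supp\mathcal{G})\cap C$ and choose $N$ with $\mathcal{I}_{\mathcal{X}_W}^{N}\mathcal{G}=0$. The key point is that $\mathcal{X}_W=f^{*}W$ is a reduced effective Cartier divisor with trivial normal bundle, so $\mathcal{I}_{\mathcal{X}_W}^{j}/\mathcal{I}_{\mathcal{X}_W}^{j+1}\cong\mathcal{O}_{\mathcal{X}_W}$ for all $j$, and since $\mathcal{E}$ is $C$-torsion free the multiplication $\mathcal{I}_{\mathcal{X}_W}^{j}\otimes\mathcal{E}\to\mathcal{E}$ is injective; therefore $\mathcal{I}_{\mathcal{X}_W}^{j}\mathcal{E}/\mathcal{I}_{\mathcal{X}_W}^{j+1}\mathcal{E}\cong i_{W*}\mathcal{E}_W=\bigoplus_{p\in W}i_{p*}\mathcal{E}_p$. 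As $\mathcal{E}\twoheadrightarrow\mathcal{G}$ induces surjections on the graded pieces of the $\mathcal{I}_{\mathcal{X}_W}$-adic filtrations, $\mathcal{G}$ acquires a finite filtration whose subquotients are of the form $i_{p*}\mathcal{Q}$ with $p\in W$ and $\mathcal{Q}$ a quotient of $\mathcal{E}_p$ in $\Coh(\mathcal{X}_p)$. Each such subquotient satisfies $H\ch_2(i_{p*}\mathcal{Q})\ge\mu_{H,F}(\mathcal{E})\,H^2\ch_1(i_{p*}\mathcal{Q})$: when $\ch_0(\mathcal{Q})>0$ the fibre hypothesis gives $\mu_{H_p}(\mathcal{Q})\ge\mu_{H_p}(\mathcal{E}_p)=\mu_{H,F}(\mathcal{E})$, and when $\ch_0(\mathcal{Q})=0$ the right-hand side is $0$ while $H\ch_2(i_{p*}\mathcal{Q})=H_p\ch_1(\mathcal{Q})\ge0$ because $H_p$ is ample on $\mathcal{X}_p$ and $\ch_1(\mathcal{Q})$ is effective. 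Summing over the subquotients gives $H\ch_2(\mathcal{G})\ge\mu_{H,F}(\mathcal{E})\,H^2\ch_1(\mathcal{G})$, and dividing by $H^2\ch_1(\mathcal{G})>0$ yields $\mu_C(\mathcal{G})\ge\mu_{H,F}(\mathcal{E})=\mu_C(\mathcal{F})$. Hence $\mathcal{E}$ is $\mu_C$-semistable.

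The only step that is not bookkeeping is the last case above: a $C$-torsion quotient $\mathcal{G}$ is in general not the pushforward of a sheaf from a single fibre, so the fibre hypothesis cannot be applied to it directly. The isomorphism $\mathcal{I}_{\mathcal{X}_W}^{j}\mathcal{E}/\mathcal{I}_{\mathcal{X}_W}^{j+1}\mathcal{E}\cong i_{W*}\mathcal{E}_W$, which depends on the fibres having trivial normal bundle and on $\mathcal{E}$ being $C$-torsion free, is precisely what lets one reduce an arbitrary vertical quotient to quotients of the individual restrictions $\mathcal{E}_p$; the rest reduces to the weak see-saw property and additivity of Chern characters along filtrations.
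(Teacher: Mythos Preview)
The paper does not prove this proposition itself; its entire proof is the citation ``See \cite[Lemma 15.7]{BLMNPS}.'' You have instead written out a correct self-contained argument directly from the definitions of $\mu_C$ and $\mu_{H,F}$, so there is nothing to compare beyond noting that you supply what the paper outsources.

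Your proof is sound. The forward direction is bookkeeping via the weak see-saw property, and in the backward direction the only nontrivial case is a $C$-torsion quotient $\mathcal{G}$; your filtration of $\mathcal{G}$ by the $\mathcal{I}_{\mathcal{X}_W}$-adic filtration, together with the identification $\mathcal{I}_{\mathcal{X}_W}^{j}\mathcal{E}/\mathcal{I}_{\mathcal{X}_W}^{j+1}\mathcal{E}\cong i_{W*}\mathcal{E}_W$ (using that $\mathcal{E}$ is $C$-flat and that fibres have trivial normal bundle), correctly reduces to quotients of single fibre restrictions where the hypothesis applies. One cosmetic imprecision: in the reduction to $\ch_0(\mathcal{E})>0$ you say a $C$-torsion free sheaf of rank zero is ``supported on a horizontal curve,'' but its support may well be a horizontal surface. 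The conclusion that both conditions hold automatically is still correct (each $\mathcal{E}_p$ has rank zero, so every relevant slope is $+\infty$; for a $C$-torsion quotient one can invoke the same filtration argument to see $H^2\ch_1(\mathcal{G})=0$), so this does not affect the validity of the proof.
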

\begin{proof}
See \cite[Lemma 15.7]{BLMNPS}.
\end{proof}

\subsection{Relative tilt-stability} Let $\beta$ be a real
number and $\alpha$ be a positive real number. There exists a
\emph{torsion pair} $(\mathcal{T}_{\beta H},\mathcal{F}_{\beta H})$
in $\Coh(\mathcal{X})$ defined as follows:
\begin{eqnarray*}
\mathcal{T}_{\beta H}&=&\{\mathcal{E}\in\Coh(\mathcal{X}):\mu^-_{C}(\mathcal{E})>\beta \}\\
\mathcal{F}_{\beta
H}&=&\{\mathcal{E}\in\Coh(\mathcal{X}):\mu^+_{C}(\mathcal{E})\leq\beta
\}.
\end{eqnarray*}
Equivalently, $\mathcal{T}_{\beta H}$ and $\mathcal{F}_{\beta H}$
are the extension-closed subcategories of $\Coh(\mathcal{X})$
generated by $\mu_{C}$-stable sheaves with $\mu_{C}$-slope $>\beta$
and $\leq\beta$, respectively.

\begin{definition}\label{def3.3}
We let $\Coh_C^{\beta H}(\mathcal{X})\subset \D^b(\mathcal{X})$ be
the extension-closure
$$\Coh_C^{\beta H}(\mathcal{X})=\langle\mathcal{T}_{\beta H}, \mathcal{F}_{
\beta H}[1]\rangle.$$
\end{definition}

By the general theory of torsion pairs and tilting \cite{HRS},
$\Coh_C^{\beta H}(\mathcal{X})$ is the heart of a bounded
t-structure on $\D^b(\mathcal{X})$; in particular, it is an abelian
category. For any point $s\in C$ such that $\mathcal{X}_s$ is
smooth, similar as Definition \ref{def3.3}, one can define the
subcategory
$$\Coh^{\beta H_s}(\mathcal{X}_s)=\langle\mathcal{T}_{\beta H_s}, \mathcal{F}_{
\beta H_s}[1]\rangle\subset\D^b(\mathcal{X}_s)$$  via the
$\mu_{H_s}$-stability (see \cite[Section 14.2]{BLMNPS}). For any
$\mathcal{E}\in \Coh_C^{ \beta H}(\mathcal{X})$, its relative
tilt-slope $\nu_{H,F}^{\alpha, \beta}$ is defined by
\begin{eqnarray*}
\nu_{H,F}^{\alpha, \beta}(\mathcal{E})= \left\{
\begin{array}{lcl}
+\infty,  & &\mbox{if}~FH\ch^{\beta}_1(\mathcal{E})=0,\\
&&\\
\frac{F\ch_2^{\beta}(\mathcal{E})-\frac{1}{2}\alpha^2FH^{2}\ch^{\beta}_0(\mathcal{E})}{HF\ch^{\beta}_1(\mathcal{E})},
& &\mbox{otherwise}.
\end{array}\right.
\end{eqnarray*}

\begin{definition}
An object $\mathcal{E}\in\Coh_C^{ \beta H}(\mathcal{X})$ is
$\nu_{H,F}^{\alpha,\beta}$-(semi)stable (or relative
tilt-(semi)stable) if, for all non-zero subobjects
$\mathcal{F}\hookrightarrow \mathcal{E}$, we have
$$\nu_{H,F}^{\alpha,\beta}(\mathcal{F})<(\leq)\nu_{H,F}^{\alpha,\beta}(\mathcal{E}/\mathcal{F}).$$
\end{definition}
We can also consider the tilt-stability on the smooth fibers of $f$.
For any point $s\in C$ such that $\mathcal{X}_s$ is smooth, the
tilt-slope $\nu_{s}^{\alpha, \beta}$ of an object $\mathcal{G}\in
\Coh^{ \beta H_s}(\mathcal{X}_s)$ is defined by
\begin{eqnarray*}
\nu_{s}^{\alpha, \beta}(\mathcal{G})= \left\{
\begin{array}{lcl}
+\infty,  & &\mbox{if}~H_s\ch_{\mathcal{X}_s,1}^{\beta}(\mathcal{G})=0,\\
&&\\
\frac{\ch_{\mathcal{X}_s,2}^{\beta}(\mathcal{G})-\frac{1}{2}\alpha^2H_s^{2}\ch_{\mathcal{X}_s,0}^{\beta}(\mathcal{G})}{H_s\ch_{\mathcal{X}_s,1}^{\beta}(\mathcal{G})},
& &\mbox{otherwise}.
\end{array}\right.
\end{eqnarray*}
This gives the tilt-stability condition on $\mathcal{X}_s$ defined
in \cite{BMT, BMS}. One sees
\begin{eqnarray*}
\nu_s^{\alpha,
\beta}(\mathcal{E}_s)=\nu_{H,F}^{\alpha,\beta}(\mathcal{E}).
\end{eqnarray*}

Relative tilt-stability gives a notion of stability, in the sense
that Harder-Narasimhan filtrations exist:
\begin{lemma}\label{HN}
Let $\mathcal{E}$ be an object in $\Coh_C^{ \beta H}(\mathcal{X})$.
Then there is a filtration
$$0=\mathcal{E}_0\subset \mathcal{E}_1\subset\cdots\subset \mathcal{E}_m=\mathcal{E}$$
such that: $\mathcal{G}_i:=\mathcal{E}_i/\mathcal{E}_{i-1}$ is
$\nu_{H,F}^{\alpha,\beta}$-semistable, and
$\nu_{H,F}^{\alpha,\beta}(\mathcal{G}_1)>\cdots>\nu_{H,F}^{\alpha,\beta}(\mathcal{G}_m)$.
We write
$\nu_{H,F}^{\alpha,\beta,+}(\mathcal{E}):=\nu_{H,F}^{\alpha,\beta}(\mathcal{G}_1)$
and
$\nu_{H,F}^{\alpha,\beta,-}(\mathcal{E}):=\nu_{H,F}^{\alpha,\beta}(\mathcal{G}_m)$.
\end{lemma}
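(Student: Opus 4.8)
The plan is to present $\nu_{H,F}^{\alpha,\beta}$-stability as the slope stability attached to a weak stability function on the abelian category $\mathcal{A}:=\Coh_C^{\beta H}(\mathcal{X})$, and then to apply the standard criterion guaranteeing Harder--Narasimhan filtrations for such functions. Concretely, one considers the additive homomorphism $Z:=Z^{\alpha,\beta}:\K(\mathcal{A})\to\mathbb{C}$ given by
\[
Z(\mathcal{E})=-\Big(F\ch_2^{\beta}(\mathcal{E})-\tfrac{\alpha^2}{2}H^2F\ch_0^{\beta}(\mathcal{E})\Big)+i\,HF\ch_1^{\beta}(\mathcal{E}),
\]
so that $\nu_{H,F}^{\alpha,\beta}=-\Re Z/\Im Z$ and the $\nu_{H,F}^{\alpha,\beta}$-(semi)stable objects are precisely the $Z$-(semi)stable ones. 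The proof then reduces to two verifications: that $Z$ is a weak stability function on $\mathcal{A}$, and that $\mathcal{A}$ is noetherian, together with a mild discreteness statement on the values of $\Im Z$.

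For the first point, I would check that $\Im Z(\mathcal{E})=HF\ch_1^{\beta}(\mathcal{E})\ge 0$ for every $0\ne\mathcal{E}\in\mathcal{A}$: writing $\mathcal{E}$ as an extension of an object of $\mathcal{T}_{\beta H}$ by a shift of an object of $\mathcal{F}_{\beta H}$ and decomposing each into $\mu_C$-HN factors, every factor of positive rank contributes $HF\ch_1-\beta H^2F\ch_0$ with the correct sign (by the defining inequalities $\mu_C>\beta$, resp. $\le\beta$), torsion factors contribute a nonnegative term, and the shift flips both; this gives $\Im Z(\mathcal{E})\ge 0$. When $\Im Z(\mathcal{E})=0$ one must show $\Re Z(\mathcal{E})\le 0$, i.e.\ $F\ch_2^{\beta}(\mathcal{E})-\tfrac{\alpha^2}{2}H^2F\ch_0^{\beta}(\mathcal{E})\ge 0$; the vanishing of $HF\ch_1^{\beta}$ forces $\ch_0(\mathcal{E})=0$ and pins $\mathcal{E}$ down to objects supported in small dimension (plus shifts of sheaves all of whose relevant degrees vanish), and the inequality then reduces, via $\nu_s^{\alpha,\beta}(\mathcal{E}_s)=\nu_{H,F}^{\alpha,\beta}(\mathcal{E})$ on a general fiber and the relative Bogomolov inequality of Theorem \ref{Bog1}, to the classical Bogomolov inequality; this is the content of the analysis in \cite{Sun2}.

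The main obstacle is noetherianity of the tilted heart $\mathcal{A}$, which is not automatic from noetherianity of $\Coh(\mathcal{X})$. I would argue that along any ascending chain $\mathcal{E}_1\subset\mathcal{E}_2\subset\cdots$ in $\mathcal{A}$ the successive quotients lie in $\mathcal{A}$, so the genuinely discrete lattice invariants $\ch_0$ and $HF\ch_1$ (taking values in $\tfrac1N\mathbb{Z}$ once $N$ clears the denominators of $H$) are monotone and bounded along the chain; hence from some index on, the quotients $\mathcal{E}_{i+1}/\mathcal{E}_i$ satisfy $\ch_0=0$ and $HF\ch_1^{\beta}=0$, and such objects form a noetherian subcategory of $\mathcal{A}$ (being governed by $C$-torsion sheaves of low-dimensional support together with shifts of sheaves with vanishing degrees, all living inside the noetherian $\Coh(\mathcal{X})$), which forces the chain to stabilize. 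The cleanest implementation is to quote the corresponding noetherianity statement for the relative tilted heart already established in \cite{BLMNPS} and \cite{Sun2}; the irrational-$\beta$ case, where the values of $\Im Z$ can be dense, is exactly why the chain argument must be phrased in terms of $\ch_0$ and $HF\ch_1$ rather than $\Im Z$ itself.

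With $\mathcal{A}$ noetherian, $Z$ a weak stability function, and the set of attained values of $\Im Z$ admitting no infinite bounded strictly decreasing sequence, the existence and uniqueness of the Harder--Narasimhan filtration follow from the standard argument (as in \cite[Proposition 2.4]{Bri1}, in the weak-stability form used in \cite{BMS, BMSZ} and \cite{Sun2}): were some $\mathcal{E}$ without such a filtration, one would build an infinite ascending chain of subobjects of strictly increasing slope, contradicting the previous step. The strict inequalities $\nu_{H,F}^{\alpha,\beta}(\mathcal{G}_1)>\cdots>\nu_{H,F}^{\alpha,\beta}(\mathcal{G}_m)$ and the well-definedness of $\nu_{H,F}^{\alpha,\beta,\pm}$ are then formal.
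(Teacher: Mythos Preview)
Your approach is genuinely different from the paper's, and it has a real gap at the noetherianity step.

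The paper does \emph{not} argue via a weak stability function plus noetherianity. Instead it exploits the fibration: it first observes (via \cite[Lemma 6.17]{BLMNPS}) that $\Coh_C^{\beta H}(\mathcal{X})$ has a $C$-torsion theory, so it suffices to produce an HN filtration for the $C$-torsion free quotient $\mathcal{E}_{C\text{-tf}}$. On the generic fiber $\mathcal{X}_{K(C)}$ the ordinary tilt-HN filtration of $(\mathcal{E}_{C\text{-tf}})_{K(C)}$ exists by \cite[Appendix]{BMS}; this filtration is then lifted to $\Coh_C^{\beta H}(\mathcal{X})$ using \cite[Lemma 4.16]{BLMNPS}, the lifts are adjusted so that the graded pieces are $C$-torsion free, and Lemma \ref{K-stable} (relative tilt-semistability $\Leftrightarrow$ generic-fiber tilt-semistability for $C$-torsion free objects) finishes the job. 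This works uniformly for all real $\beta$ and never invokes noetherianity of the tilted heart.

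Your route would be fine in principle, but the noetherianity you need is precisely the nontrivial content of Proposition \ref{Noether}, which is proved \emph{after} this lemma, uses the $C$-torsion theory established in this lemma's proof, and---crucially---is only stated for $\beta\in\mathbb{Q}$ (the discreteness of $H\ch_2^{\beta}$ is used there). Your sketch does not fill this gap: along an ascending chain in the tilted heart neither $\ch_0$ nor $HF\ch_1$ is individually monotone or bounded, so you cannot simply say they stabilise; and the claim that objects with $HF\ch_1^{\beta}=0$ form a noetherian subcategory ``because they live inside the noetherian $\Coh(\mathcal{X})$'' is exactly what fails for tilted hearts and is the whole point of the delicate argument with reflexive hulls on fibers in the proof of Proposition \ref{Noether}. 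As written, your argument at best covers $\beta\in\mathbb{Q}$, whereas the lemma is asserted for all $\beta\in\mathbb{R}$.

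A minor slip, not fatal: when $\Im Z(\mathcal{E})=0$ you assert $\ch_0(\mathcal{E})=0$, but Lemma \ref{ch2} only gives $\ch_0(\mathcal{E})\le 0$. The desired inequality $\Re Z(\mathcal{E})\le 0$ still follows from $F\ch_2^{\beta}(\mathcal{E})\ge 0$ and $\ch_0(\mathcal{E})\le 0$, so this does not affect the weak-stability-function verification.
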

\begin{proof}
Since one has the standard exact sequence
$\mathcal{H}^{-1}(\mathcal{E})[1]\hookrightarrow\mathcal{E}\twoheadrightarrow\mathcal{H}^{0}(\mathcal{E})$
in $\Coh_C^{ \beta H}(\mathcal{X})$, by \cite[Lemma 6.17]{BLMNPS},
one sees that $\mathcal{E}$ admit a unique maximal $C$-torsion
subobject, namely $\Coh_C^{ \beta H}(\mathcal{X})$ has a $C$-torsion
theory. We denote by $\mathcal{E}_{C\text{-tor}}$ the maximal
$C$-torsion subobject of $\mathcal{E}$ and
$\mathcal{E}_{C\text{-tf}}=\mathcal{E}/\mathcal{E}_{C\text{-tor}}$
the $C$-torsion free quotient of $\mathcal{E}$. It will be enough to
show that the Harder-Narasimhan filtration exists for
$\mathcal{E}_{C\text{-tf}}$.

By \cite[Appendix 2]{BMS}, one sees that the Harder-Narasimhan
filtration exists for $(\mathcal{E}_{C\text{-tf}})_{K(C)}$ with
respect to $\nu_{K(C)}^{\alpha,\beta}$-stability. We denote it by
$$0=\widetilde{\mathcal{E}}_0\subset
\widetilde{\mathcal{E}}_1\subset\cdots\subset\widetilde{\mathcal{E}}_m=(\mathcal{E}_{C\text{-tf}})_{K(C)}.$$
One can lift it to a filtration in $\Coh_C^{ \beta H}(\mathcal{X})$
$$0=\mathcal{E}_0\subset \mathcal{E}_1\subset\cdots\subset \mathcal{E}_m=\mathcal{E}_{C\text{-tf}}$$
from \cite[Lemma 4.16.(3)]{BLMNPS}. Replacing $\mathcal{E}_j$ by the
kernel of
$\mathcal{E}_{j+1}\rightarrow(\mathcal{E}_{j+1}/\mathcal{E}_{j})_{C\text{-tf}}$,
one can assume that $\mathcal{E}_{j+1}/\mathcal{E}_{j}$ is
$C$-torsion free. Hence $\mathcal{E}_{j+1}/\mathcal{E}_{j}$ is
$\nu_{H,F}^{\alpha,\beta}$-semistable by the following lemma and
$$\nu_{H,F}^{\alpha,\beta}(\mathcal{E}_{j}/\mathcal{E}_{j-1})=\nu_{K(C)}^{\alpha,\beta}(\widetilde{\mathcal{E}}_{j}/\widetilde{\mathcal{E}}_{j-1})>
\nu_{K(C)}^{\alpha,\beta}(\widetilde{\mathcal{E}}_{j+1}/\widetilde{\mathcal{E}}_{j})=\nu_{H,F}^{\alpha,\beta}(\mathcal{E}_{j+1}/\mathcal{E}_{j}).$$
This completes the proof.
\end{proof}

\begin{lemma}\label{K-stable}
Let $\mathcal{E}\in \Coh_C^{ \beta H}(\mathcal{X})$ be a $C$-torsion
free object. Then the following conditions are equivalent:
\begin{enumerate}
\item $\mathcal{E}$ is $\nu_{H,F}^{\alpha, \beta}$-(semi)stable;

\item $\mathcal{E}_{K(C)}$ is $\nu_{K(C)}^{\alpha,
\beta}$-(semi)stable.
\end{enumerate}
\end{lemma}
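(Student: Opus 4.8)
The plan is to leverage the general machinery of $C$-local $t$-structures and generic flatness from \cite{BLMNPS} rather than arguing directly with Chern characters. The key point is that for a $C$-torsion free object $\mathcal{E}\in\Coh_C^{\beta H}(\mathcal{X})$, every subobject and quotient in $\Coh_C^{\beta H}(\mathcal{X})$ interacts well with restriction to the generic point, and the relative tilt-slope $\nu_{H,F}^{\alpha,\beta}$ is computed from intersection numbers that restrict compatibly, via the identity $\nu_s^{\alpha,\beta}(\mathcal{E}_s)=\nu_{H,F}^{\alpha,\beta}(\mathcal{E})$ recorded just before Lemma \ref{HN} (and its evident analogue over $K(C)$). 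So first I would record that restriction to the generic fiber $(-)_{K(C)}$ is exact on $\Coh_C^{\beta H}(\mathcal{X})$ and sends it into $\Coh^{\beta H_{K(C)}}(\mathcal{X}_{K(C)})$; this is part of the $C$-local $t$-structure formalism (\cite[Theorem 5.3, Lemma 4.16]{BLMNPS}), together with the fact that the tilting torsion pair $(\mathcal{T}_{\beta H},\mathcal{F}_{\beta H})$ is cut out by $\mu_C$-stability, which by Proposition \ref{pro2.11} and the genericity statement of Lemma \ref{f-stable} is detected on the generic fiber.

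For the implication (1) $\Rightarrow$ (2): suppose $\mathcal{E}$ is $\nu_{H,F}^{\alpha,\beta}$-(semi)stable but $\mathcal{E}_{K(C)}$ is not $\nu_{K(C)}^{\alpha,\beta}$-(semi)stable. Take a destabilizing subobject $\mathcal{G}\hookrightarrow\mathcal{E}_{K(C)}$ in $\Coh^{\beta H_{K(C)}}(\mathcal{X}_{K(C)})$. Using \cite[Lemma 4.16.(3)]{BLMNPS} (the same lifting statement already invoked in the proof of Lemma \ref{HN}), I would spread $\mathcal{G}$ out to a subobject $\widetilde{\mathcal{G}}\hookrightarrow\mathcal{E}$ in $\Coh_C^{\beta H}(\mathcal{X})$ with $\widetilde{\mathcal{G}}_{K(C)}=\mathcal{G}$, and after replacing $\widetilde{\mathcal{G}}$ by the kernel of $\mathcal{E}\to(\mathcal{E}/\widetilde{\mathcal{G}})_{C\text{-tf}}$ I may assume $\mathcal{E}/\widetilde{\mathcal{G}}$ is $C$-torsion free as well. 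Then $\nu_{H,F}^{\alpha,\beta}(\widetilde{\mathcal{G}})=\nu_{K(C)}^{\alpha,\beta}(\mathcal{G})$ and likewise for the quotient, because both slopes are read off from the generic fiber; this contradicts (semi)stability of $\mathcal{E}$.

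For (2) $\Rightarrow$ (1): suppose $\mathcal{E}_{K(C)}$ is $\nu_{K(C)}^{\alpha,\beta}$-(semi)stable, and let $\mathcal{F}\hookrightarrow\mathcal{E}$ be a nonzero subobject in $\Coh_C^{\beta H}(\mathcal{X})$, with quotient $\mathcal{Q}$. Restricting to $K(C)$ is exact, so $\mathcal{F}_{K(C)}\hookrightarrow\mathcal{E}_{K(C)}\twoheadrightarrow\mathcal{Q}_{K(C)}$. If $\mathcal{F}_{K(C)}\neq0$ and $\mathcal{Q}_{K(C)}\neq 0$, then (semi)stability over $K(C)$ plus the slope-comparison identity gives the desired inequality $\nu_{H,F}^{\alpha,\beta}(\mathcal{F})<(\leq)\nu_{H,F}^{\alpha,\beta}(\mathcal{Q})$ directly. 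The remaining cases are where $\mathcal{F}$ or $\mathcal{Q}$ is $C$-torsion (equivalently has vanishing generic fiber, by Lemma \ref{flat}(2)); since $\mathcal{E}$ is $C$-torsion free it has no nonzero $C$-torsion subobject, so $\mathcal{F}$ is not $C$-torsion, and I must rule out, or handle, the case that $\mathcal{Q}$ is $C$-torsion (i.e. $\mathcal{F}_{K(C)}=\mathcal{E}_{K(C)}$). Here I would argue that a $C$-torsion object $\mathcal{Q}$, being a pushforward from a proper closed subscheme of $C$, has $FH\ch_1^\beta(\mathcal{Q})=0$ by Lemma \ref{Chern}(1), hence $\nu_{H,F}^{\alpha,\beta}(\mathcal{Q})=+\infty$, so the required inequality holds automatically.

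I expect the main obstacle to be the bookkeeping in the $C$-torsion case of (2) $\Rightarrow$ (1) — precisely checking that the slope of a $C$-torsion quotient is $+\infty$ and that no subtlety arises when $\mathcal{F}_{K(C)}$ equals all of $\mathcal{E}_{K(C)}$ but $\mathcal{F}\neq\mathcal{E}$ — together with verifying that the lifting of subobjects from the generic fiber (via \cite[Lemma 4.16.(3)]{BLMNPS}) is compatible with the $C$-torsion-free truncation so that slopes are genuinely preserved; the generic-fiber slope identities themselves are formal once the intersection-number restrictions of Lemma \ref{Chern} are in hand.
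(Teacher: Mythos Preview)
Your proposal is correct and follows essentially the same approach as the paper (which simply refers to \cite[Lemma 3.15]{Sun2}): lift destabilizing subobjects from the generic fiber via \cite[Lemma 4.16(3)]{BLMNPS}, use the $C$-torsion theory established in the proof of Lemma \ref{HN}, and compare slopes using the fact that $\nu_{H,F}^{\alpha,\beta}$ involves only $F$-intersection numbers, which vanish on $C$-torsion objects and hence are determined by the generic fiber. Your handling of the edge case where the quotient $\mathcal{Q}$ is $C$-torsion (forcing $HF\ch_1^\beta(\mathcal{Q})=0$ and $\nu_{H,F}^{\alpha,\beta}(\mathcal{Q})=+\infty$) is exactly the right observation, and the concern you flag about $\mathcal{F}_{K(C)}=\mathcal{E}_{K(C)}$ with $\mathcal{F}\neq\mathcal{E}$ is not an actual obstruction, since in that case the quotient has infinite slope and the inequality is automatic.
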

\begin{proof}
The proof is the same as that of \cite[Lemma 3.15]{Sun2}.
\end{proof}

\begin{proposition}\label{Noether}
Assume that $\beta\in \mathbb{Q}$. Then the category $\Coh_C^{ \beta
H}(\mathcal{X})$ is noetherian.
\end{proposition}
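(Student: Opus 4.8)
The plan is to deduce noetherianity of the tilted heart $\Coh_C^{\beta H}(\mathcal{X})$ from noetherianity of $\Coh(\mathcal{X})$ together with the construction of $\Coh_C^{\beta H}(\mathcal{X})$ as a tilt at the torsion pair $(\mathcal{T}_{\beta H},\mathcal{F}_{\beta H})$. Recall the abstract criterion from the theory of tilting (as used in \cite{BMT}, \cite{BMS}, \cite{BLMNPS}): if $\mathcal{A}$ is a noetherian abelian category and $(\mathcal{T},\mathcal{F})$ is a torsion pair such that $\mathcal{F}$ is closed under subobjects (automatic) and, crucially, such that there is no infinite strictly ascending chain whose successive quotients alternate between $\mathcal{T}$ and $\mathcal{F}[1]$, then the tilted heart $\langle\mathcal{T},\mathcal{F}[1]\rangle$ is noetherian. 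Concretely, for an ascending chain $\mathcal{E}_1\subset\mathcal{E}_2\subset\cdots$ in $\Coh_C^{\beta H}(\mathcal{X})$, each $\mathcal{E}_i$ sits in a short exact sequence $\mathcal{H}^{-1}(\mathcal{E}_i)[1]\hookrightarrow\mathcal{E}_i\twoheadrightarrow\mathcal{H}^{0}(\mathcal{E}_i)$ with $\mathcal{H}^{-1}(\mathcal{E}_i)\in\mathcal{F}_{\beta H}$ and $\mathcal{H}^0(\mathcal{E}_i)\in\mathcal{T}_{\beta H}$; the cohomology sheaves $\mathcal{H}^{-1}(\mathcal{E}_i)$ form a \emph{descending} chain in $\Coh(\mathcal{X})$ and the $\mathcal{H}^0(\mathcal{E}_i)$ form an \emph{ascending} chain, so I want to stabilize both.

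First I would set up the long exact cohomology sequence associated to $\mathcal{E}_i\subset\mathcal{E}_{i+1}$ in $\Coh_C^{\beta H}(\mathcal{X})$, giving $0\to\mathcal{H}^{-1}(\mathcal{E}_i)\to\mathcal{H}^{-1}(\mathcal{E}_{i+1})\to\mathcal{H}^{-1}(\mathcal{E}_{i+1}/\mathcal{E}_i)\to\mathcal{H}^0(\mathcal{E}_i)\to\mathcal{H}^0(\mathcal{E}_{i+1})\to\cdots$. From this, $\mathcal{H}^{-1}(\mathcal{E}_i)$ is an ascending chain of subsheaves of $\mathcal{H}^{-1}(\mathcal{E})$ where $\mathcal{E}=\varinjlim$ exists a priori only formally — better: since $\mathcal{H}^{-1}$ is left exact, the $\mathcal{H}^{-1}(\mathcal{E}_i)$ form an ascending chain of subsheaves of a fixed sheaf once I bound them, which requires a rank/degree argument. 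The key numerical input is that for objects in $\mathcal{F}_{\beta H}$, the quantity $HF\ch_1^{\beta}$ is $\le 0$ and bounded, while for objects in $\mathcal{T}_{\beta H}$ it is $\ge 0$; combined with the discreteness of the image of $(\ch_0, HF\ch_1, F\ch_2)$ on $\mathbb{Z}$-valued lattices (here I use $\beta\in\mathbb{Q}$, so that $\ch^{\beta}$ lands in a discrete subgroup of $\NS(\mathcal{X})_{\mathbb{Q}}$ after clearing denominators), I get that $HF\ch_1^{\beta}(\mathcal{H}^0(\mathcal{E}_i))$ is nondecreasing in a discrete set bounded above, hence eventually constant; similarly for $\mathcal{H}^{-1}$.

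The standard way to finish, following \cite[Lemma 3.2.4 and its proof]{BMT} or the noetherianity arguments in \cite[Section 14]{BLMNPS}: reduce to the generic fiber using the $C$-torsion theory established in Lemma \ref{HN}, i.e., an ascending chain in $\Coh_C^{\beta H}(\mathcal{X})$ induces an ascending chain in $\Coh^{\beta H_{K(C)}}(\mathcal{X}_{K(C)})$ on the generic fiber together with $C$-torsion data; noetherianity on the generic fiber (a surface case, where $\Coh^{\beta H_{K(C)}}$ is known to be noetherian by \cite[Lemma 14.13]{BLMNPS} or the classical two-dimensional statement) handles the generic part, and the $C$-torsion part is controlled because $C$-torsion objects are pushforwards from proper closed subschemes $W\subsetneq C$, over which one inducts on $\dim W$ — ultimately landing on fibers where noetherianity of $\Coh(\mathcal{X}_p)$ or its tilt applies. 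I expect the main obstacle to be the bookkeeping at the singular fibers: the $C$-torsion subobjects can be supported on fibers $\mathcal{X}_p$ that are singular, so I cannot directly invoke smooth-surface tilt noetherianity there, and must instead argue via the weak Riemann–Roch of Lemma \ref{Chern} and the Bogomolov-type bounds of Theorem \ref{Bog1} that the Chern characters of such torsion pieces lie in a discrete bounded set, forcing stabilization. Handling this uniformly over all (possibly singular) fibers, rather than fiber by fiber, is the delicate point; everything else is the routine tilting-noetherianity machinery.
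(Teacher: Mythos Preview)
Your overall strategy---reduce to the generic fiber via the $C$-torsion theory, invoke surface-tilt noetherianity there, then handle the $C$-torsion remainder---is exactly the paper's approach, and you correctly flag the singular-fiber torsion pieces as the crux. But the tools you propose for that crux do not do the job. ``Induction on $\dim W$'' is empty: $W$ is a proper closed subscheme of the curve $C$, so $\dim W=0$ always; there is nothing to induct on. And Theorem~\ref{Bog1} concerns $\mu_{H,F}$-semistable torsion-free sheaves on $\mathcal{X}$; it says nothing about bounding $C$-torsion subobjects and will not give you a finite set of possible Chern characters for the torsion pieces. Discreteness alone (via $\beta\in\mathbb{Q}$) bounds nothing without an accompanying positivity/boundedness statement, and that is precisely what is missing from your sketch.

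What the paper actually does for the $C$-torsion part, working with a chain of surjections and their kernels $\mathcal{K}_i\subset\mathcal{E}_{1,C\text{-tor}}$: after stabilizing $\mathcal{H}^{-1}(\mathcal{K}_i)$ and $\mathcal{H}^0(\mathcal{E}_{1,C\text{-tor}}/\mathcal{K}_i)$ by noetherianity of $\Coh(\mathcal{X})$, it uses the positivity $H\ch_2^\beta\ge 0$ on $C$-torsion objects in the tilt together with discreteness (here $\beta\in\mathbb{Q}$ enters) to force $H\ch_2^\beta(\mathcal{K}_i)=0$. Lemma~\ref{ch2} then gives that $\mathcal{H}^0(\mathcal{K}_i)$ is supported in dimension zero. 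The decisive geometric input, which your proposal does not mention, is the \emph{normality} of the fibers $\mathcal{X}_p$: writing the relevant quotient $V=\mathcal{H}^{-1}(\mathcal{E}_{1,C\text{-tor}})/\mathcal{H}^{-1}(\mathcal{K}_i)$ as $i_{p*}V'$ with $V'$ torsion-free on $\mathcal{X}_p$, normality makes the double dual $(V')^{\vee\vee}$ reflexive with $(V')^{\vee\vee}/V'$ zero-dimensional of fixed finite length, and each $\mathcal{H}^0(\mathcal{K}_i)$ embeds into this fixed sheaf. That is what bounds the lengths and terminates the chain. Without this reflexive-hull argument (or a genuine substitute) you have no bound on the zero-dimensional pieces, and the proof does not close.
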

\begin{proof}
The conclusion was showed in the proof of \cite[Theorem 3.11]{Sun2}
in the case that $f$ is smooth.

For our case, let
$\mathcal{E}_1\twoheadrightarrow\mathcal{E}_2\twoheadrightarrow\cdots$
be an infinite sequence of surjections in $\Coh_C^{ \beta
H}(\mathcal{X})$. By the proof of Lemma \ref{HN}, one sees that
$\Coh_C^{ \beta H}(\mathcal{X})$ has a $C$-torsion theory. Hence the
induced sequence of surjections
$(\mathcal{E}_1)_{K(C)}\twoheadrightarrow(\mathcal{E}_2)_{K(C)}\twoheadrightarrow\cdots$
stabilizes by the proof of \cite[Lemma 3.2.4]{BMT}, in other words
we may assume that the kernel $\mathcal{K}_i$ of every surjection
$\mathcal{E}_1\twoheadrightarrow\mathcal{E}_{i}$ is $C$-torsion.
Then we obtains a chain of injections
\begin{equation}\label{2.1}
0=\mathcal{K}_1\subset\mathcal{K}_2\subset\cdots\subset\mathcal{E}_{1,C\text{-tor}},
\end{equation}
where $\mathcal{E}_{1, C\text{-tor}}$ is the maximal $C$-torsion
subobject of $\mathcal{E}_1$. It induces the chain of injections
$$0\subset\mathcal{H}^{-1}(\mathcal{K}_2)\subset\mathcal{H}^{-1}(\mathcal{K}_3)\subset\cdots\subset\mathcal{H}^{-1}(\mathcal{E}_{1,C\text{-tor}}).$$
Thus we can assume that
$\mathcal{H}^{-1}(\mathcal{K}_2)=\mathcal{H}^{-1}(\mathcal{K}_i)$
for $i\geq2$ as $\Coh(\mathcal{X})$ is noetherian.

Let us consider the sequence of surjections
$$\mathcal{E}_{1,C\text{-tor}}\twoheadrightarrow\mathcal{E}_{1,C\text{-tor}}/\mathcal{K}_2\twoheadrightarrow\cdots
\twoheadrightarrow\mathcal{E}_{1,C\text{-tor}}/\mathcal{K}_i\twoheadrightarrow\cdots.$$
Then we have the chain of surjections
$$\mathcal{H}^0(\mathcal{E}_{1,C\text{-tor}})\twoheadrightarrow\mathcal{H}^0(\mathcal{E}_{1,C\text{-tor}}/\mathcal{K}_2)\twoheadrightarrow\cdots\twoheadrightarrow
\mathcal{H}^0(\mathcal{E}_{1,C\text{-tor}}/\mathcal{K}_i)\twoheadrightarrow.$$
So we may assume that
$\mathcal{H}^0(\mathcal{E}_{1,C\text{-tor}})=\mathcal{H}^0(\mathcal{E}_{1,C\text{-tor}}/\mathcal{K}_i)$
for $i\geq1$. By the construction of $\Coh_C^{ \beta
H}(\mathcal{X})$, one sees
$H\ch^{\beta}_2(\mathcal{E}_{1,C\text{-tor}}/\mathcal{K}_i)\geq0$.
Thus we may also assume that
$H\ch^{\beta}_2(\mathcal{E}_{1,C\text{-tor}}/\mathcal{K}_i)$ is
independent of $i$ from the discreteness of $H\ch^{\beta}_2$. This
implies that $H\ch^{\beta}_2(\mathcal{K}_i)=0$ for any $i\geq1$. By
the following lemma, one sees $\mathcal{H}^0(\mathcal{K}_i)$ is a
torsion sheaf supported in dimension zero. By setting
$V=\mathcal{H}^{-1}(\mathcal{E}_{1,C\text{-tor}})/\mathcal{H}^{-1}(\mathcal{K}_i)$,
we have the exact sequence
$$0\rightarrow
V\rightarrow\mathcal{H}^{-1}(\mathcal{E}_{1,C\text{-tor}}/\mathcal{K}_i)\rightarrow\mathcal{H}^0(\mathcal{K}_i)\rightarrow0.$$
Let us assume without loss of generality that $V=i_{p*}V^{\prime}$,
$\mathcal{H}^{-1}(\mathcal{E}_{1,C\text{-tor}}/\mathcal{K}_i)=i_{p*}E_i^{\prime}$
and $\mathcal{H}^0(\mathcal{K}_i)=i_{p*}Q_i^{\prime}$ for
$V^{\prime}, E_i^{\prime}, Q_i^{\prime}\in\Coh(\mathcal{X}_p)$. By
the construction of $\Coh_C^{ \beta H}(\mathcal{X})$, one sees that
$E_i^{\prime}$ is torsion free. So is $V^{\prime}$. The integrality
of $\mathcal{X}_p$ implies that $V^{\prime}$ is a subsheaf of
$$(V^{\prime})^{\vee\vee}:=\mathcal{H}om(\mathcal{H}om(V^{\prime},\mathcal{O}_{\mathcal{X}_p}),
\mathcal{O}_{\mathcal{X}_p}).$$ By the normality of $\mathcal{X}_p$,
one sees that $(V^{\prime})^{\vee\vee}$ is reflexive and the
quotient $(V^{\prime})^{\vee\vee}/V^{\prime}$ is supported on
$\mathcal{X}_p$ in dimension zero. It follows that $Q_i^{\prime}$ is
a subsheaf of $(V^{\prime})^{\vee\vee}/V^{\prime}$. In particular
the length of $\mathcal{H}^0(\mathcal{K}_i)$ is bounded. Therefore
$\mathcal{H}^0(\mathcal{K}_i)=\mathcal{H}^0(\mathcal{K}_{i+1})$ for
large $i$, and the sequence (\ref{2.1}) terminates. This completes
the proof.
\end{proof}

\begin{lemma}\label{ch2}
Let $\mathcal{E}$ be an object in $\Coh_C^{\beta H}(\mathcal{X})$.
\begin{enumerate}
\item We have $HF\ch^{\beta}_1(\mathcal{E})\geq0$.
\item If $HF\ch^{\beta}_1(\mathcal{E})=0$, then one has
$H\ch^{\beta}_2(\mathcal{E})\geq0$,
$F\ch^{\beta}_2(\mathcal{E})\geq0$ and $\ch_0(\mathcal{E})\leq0$.
\item If $HF\ch^{\beta}_1(\mathcal{E})=\ch_0(\mathcal{E})=H\ch^{\beta}_2(\mathcal{E})=0$, then
$\mathcal{H}^0(\mathcal{E})\in\Coh_{\leq 0}(\mathcal{X})$,
$\mathcal{H}^{-1}(\mathcal{E})$ is a $C$-torsion $\mu_C$-semistable
sheaf with $\mu_C(\mathcal{H}^{-1}(\mathcal{E}))=\beta$. Moreover,
if the Bogomolov inequality holds on every fiber of $f$, then
$\ch^{\beta}_3(\mathcal{E})\geq0$.
\end{enumerate}
\end{lemma}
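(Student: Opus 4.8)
The plan is to reduce everything to the $\mu_C$‑stable generators of the torsion pair $(\mathcal{T}_{\beta H},\mathcal{F}_{\beta H})$, using the canonical sequence
$$\mathcal{H}^{-1}(\mathcal{E})[1]\hookrightarrow\mathcal{E}\twoheadrightarrow\mathcal{H}^{0}(\mathcal{E})$$
in $\Coh_C^{\beta H}(\mathcal{X})$, where $\mathcal{H}^{0}(\mathcal{E})\in\mathcal{T}_{\beta H}$, $\mathcal{H}^{-1}(\mathcal{E})\in\mathcal{F}_{\beta H}$ and $\ch^{\beta}(\mathcal{E})=\ch^{\beta}(\mathcal{H}^{0}(\mathcal{E}))-\ch^{\beta}(\mathcal{H}^{-1}(\mathcal{E}))$. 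Since $HF\ch_1^{\beta}$, $H\ch_2^{\beta}$ and $F\ch_2^{\beta}$ are additive on short exact sequences, it suffices to control each on a $\mu_C$‑stable sheaf $G$ of $\mu_C$‑slope $>\beta$ (resp. $\le\beta$). If $\ch_0(G)>0$ then $HF\ch_1^{\beta}(G)=H^{2}F\ch_0(G)\,(\mu_C(G)-\beta)$ with $H^{2}F\ch_0(G)>0$; if $\ch_0(G)=0$ then, the fibers being integral, every codimension‑one component of $\Supp G$ lying in a fiber is that whole fiber, so $\ch_1(G)$ is numerically $mF$ plus a sum of horizontal divisors and $HF\ch_1^{\beta}(G)=HF\ch_1(G)\ge 0$ (zero unless $G$ is not $C$‑torsion and has a horizontal divisor in its support). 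Hence the generators of $\mathcal{T}_{\beta H}$ have $HF\ch_1^{\beta}\ge 0$, those of $\mathcal{F}_{\beta H}$ have $HF\ch_1^{\beta}\le 0$, and additivity gives (1).

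For (2), $HF\ch_1^{\beta}(\mathcal{E})=0$ makes $HF\ch_1^{\beta}$ vanish on both $\mathcal{H}^{0}(\mathcal{E})$ and $\mathcal{H}^{-1}(\mathcal{E})$. A $\mu_C$‑stable generator of $\mathcal{T}_{\beta H}$ of positive rank contributes strictly positively, so $\mathcal{H}^{0}(\mathcal{E})$ is torsion; none of its (torsion) generators has a horizontal divisor in its support, so $\ch_1(\mathcal{H}^{0}(\mathcal{E}))\equiv mF$ and $F\ch_2^{\beta}(\mathcal{H}^{0}(\mathcal{E}))=F\ch_2(\mathcal{H}^{0}(\mathcal{E}))\ge 0$ ($F$ is nef and the fibrewise part of the support meets no general fiber). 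For $H\ch_2^{\beta}(\mathcal{H}^{0}(\mathcal{E}))$ one argues per generator $G$ (a $\mu_C$‑stable torsion sheaf of slope $>\beta$ with $HF\ch_1(G)=0$): if $\ch_1(G)=0$ then $H\ch_2^{\beta}(G)=H\ch_2(G)\ge 0$; if $\ch_1(G)\equiv m'F$ with $m'>0$ then $G$ must be $C$‑torsion --- otherwise its maximal $C$‑torsion subsheaf is a proper nonzero subsheaf (nonzero because a $C$‑torsion‑free sheaf carries no fibrewise part in $\ch_1$), and the corresponding proper nonzero quotient has $\mu_C=+\infty$, contradicting stability --- and then $\mu_C(G)>\beta$ gives $H\ch_2^{\beta}(G)>0$. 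On the other side, a $\mu_C$‑stable generator $G$ of $\mathcal{F}_{\beta H}$ with $HF\ch_1^{\beta}(G)=0$ is either $C$‑torsion with finite $\mu_C(G)\le\beta$, whence $H\ch_2^{\beta}(G)\le 0$ and $F\ch_2^{\beta}(G)=F\ch_2(G)=0$; or of positive rank with $\mu_C(G)=\beta$, in which case $G$ is torsion free (a torsion subsheaf would have infinite or equal $\mu_C$‑slope, both impossible), so Theorem \ref{Bog1} applies and, via $HF\ch_1^{\beta}(G)=0$, yields $H\ch_2^{\beta}(G)\le 0$ and $F\ch_2^{\beta}(G)\le 0$. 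Summing gives $H\ch_2^{\beta}(\mathcal{H}^{-1}(\mathcal{E}))\le 0$ and $F\ch_2^{\beta}(\mathcal{H}^{-1}(\mathcal{E}))\le 0$; subtracting from the non‑negative contributions of $\mathcal{H}^{0}(\mathcal{E})$ gives the two inequalities of (2), and $\ch_0(\mathcal{E})=-\ch_0(\mathcal{H}^{-1}(\mathcal{E}))\le 0$.

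For (3) the extra hypothesis $\ch_0(\mathcal{E})=0$ forces $\ch_0(\mathcal{H}^{-1}(\mathcal{E}))=0$, so $\mathcal{H}^{-1}(\mathcal{E})$ is torsion and, lying in $\mathcal{F}_{\beta H}$, has no subsheaf of infinite $\mu_C$‑slope, so $\mathcal{H}^{-1}(\mathcal{E})_{K(C)}=0$ and it is $C$‑torsion; all its $\mu_C$‑stable generators $G_k$ then have $\ch_0(G_k)=0$ and finite slope $\le\beta$, hence are $C$‑torsion with $H\ch_2^{\beta}(G_k)\le 0$. Since $H\ch_2^{\beta}(\mathcal{E})=0$ gives $H\ch_2^{\beta}(\mathcal{H}^{-1}(\mathcal{E}))=H\ch_2^{\beta}(\mathcal{H}^{0}(\mathcal{E}))\ge 0$ by (2), while $H\ch_2^{\beta}(\mathcal{H}^{-1}(\mathcal{E}))=\sum_k H\ch_2^{\beta}(G_k)\le 0$, we get $H\ch_2^{\beta}(\mathcal{H}^{-1}(\mathcal{E}))=0$, each $\mu_C(G_k)=\beta$, so $\mathcal{H}^{-1}(\mathcal{E})$ is $\mu_C$‑semistable of slope $\beta$, and $H\ch_2^{\beta}(\mathcal{H}^{0}(\mathcal{E}))=0$. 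Then $\mathcal{H}^{0}(\mathcal{E})$ is torsion, in $\mathcal{T}_{\beta H}$, with $\ch_1\equiv mF$ and $H\ch_2^{\beta}=0$: a $\mu_C$‑stable generator with $\ch_1\equiv m'F$, $m'>0$, would (by the impossibility used above) be $C$‑torsion, forcing $\mu_C=H\ch_2/(m'H^{2}F)=\beta$ and contradicting membership in $\mathcal{T}_{\beta H}$; a generator with $\ch_1=0$ has $H\ch_2=0$, so by the positivity of $H$ its one‑dimensional support is empty. Hence $\mathcal{H}^{0}(\mathcal{E})\in\Coh_{\le 0}(\mathcal{X})$.

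Finally, for $\ch_3^{\beta}(\mathcal{E})\ge 0$ when the Bogomolov inequality holds on every fiber: the finite‑length sheaf $\mathcal{H}^{0}(\mathcal{E})$ has $\ch_3^{\beta}(\mathcal{H}^{0}(\mathcal{E}))=\ch_3(\mathcal{H}^{0}(\mathcal{E}))\ge 0$, so it remains to show $\ch_3^{\beta}(\mathcal{H}^{-1}(\mathcal{E}))\le 0$. I would split the $C$‑torsion $\mu_C$‑semistable sheaf $\mathcal{H}^{-1}(\mathcal{E})$ according to the finitely many points of its support in $C$ and filter each piece by powers of the maximal ideal of $C$ at that point, reducing by Lemma \ref{Chern} to the claim that $\ch_3^{\beta}(i_{p*}G)\le 0$ for a $\mu_{H_p}$‑semistable $G\in\Coh(\mathcal{X}_p)$ with $\mu_{H_p}(G)=\beta$. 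For such $G$, Lemma \ref{Chern} gives $H\ch_2^{\beta}(i_{p*}G)=H_p\ch_1(G)-\beta H_p^{2}\ch_0(G)=0$ and $H^{2}\ch_1^{\beta}(i_{p*}G)=\ch_0(G)\,H^{2}F>0$, so the inequality $(H\ch_2^{\beta}(i_{p*}G))^{2}\ge 2H^{2}\ch_1^{\beta}(i_{p*}G)\,\ch_3^{\beta}(i_{p*}G)$ furnished by the Bogomolov inequality on $\mathcal{X}_p$ reads $0\ge 2H^{2}\ch_1^{\beta}(i_{p*}G)\,\ch_3^{\beta}(i_{p*}G)$ and forces $\ch_3^{\beta}(i_{p*}G)\le 0$. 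Summing gives $\ch_3^{\beta}(\mathcal{H}^{-1}(\mathcal{E}))\le 0$ and $\ch_3^{\beta}(\mathcal{E})\ge 0$. I expect the main difficulty to lie in part (3): extracting from the numerical vanishings that $\mathcal{H}^{0}(\mathcal{E})$ is zero‑dimensional --- where integrality of the fibers and the see‑saw property of $\mu_C$‑stability are indispensable --- and carrying out the reduction of $\ch_3^{\beta}(\mathcal{H}^{-1}(\mathcal{E}))$ to the per‑fiber statement rigorously when $\mathcal{H}^{-1}(\mathcal{E})$ meets a fiber in a non‑reduced thickening or a singular fiber, so that the filtration really produces $\mu_{H_p}$‑semistable sheaves of slope $\beta$ on the fibers.
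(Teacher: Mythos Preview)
Your overall strategy --- splitting $\mathcal{E}$ into $\mathcal{H}^{-1}(\mathcal{E})$ and $\mathcal{H}^0(\mathcal{E})$ and analyzing the $\mu_C$-(semi)stable constituents of each --- is the standard one and essentially what the cited reference \cite[Lemma 3.14]{Sun2} does. But two of your key steps are not justified as written.

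In part (2), your claim that a $\mu_C$-stable generator $G$ of $\mathcal{F}_{\beta H}$ of positive rank is torsion free is incorrect: such $G$ may carry $C$-torsion. A $C$-torsion subsheaf $T$ with $H^{2}\ch_1(T)>0$ has \emph{finite} $\mu_C(T)$, and since $HF\ch_1(T)=0$ one gets $\mu_C(G/T)=\mu_C(G)=\beta$, so stability only forces $\mu_C(T)<\beta$, not a contradiction. The fix is straightforward: this torsion contributes $\le 0$ to $H\ch_2^{\beta}$ and $0$ to $F\ch_2^{\beta}$, while the torsion-free quotient $G/T$ is $\mu_{H,F}$-semistable with $HF\ch_1^{\beta}(G/T)=0$, so Theorem \ref{Bog1} applied to $G/T$ gives the required signs.

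In part (3), both of your ``contradicting stability'' arguments are backwards: stability demands $\mu_C(\text{sub})<\mu_C(\text{quotient})$, so a quotient of slope $+\infty$ is never an obstruction. What actually rules out a non-$C$-torsion $G$ with $\ch_1\equiv m'F$ is that $G/T_C$ then sits on a horizontal curve and admits a proper nonzero subsheaf $\mathcal{F}'$; the corresponding $\mathcal{F}\supsetneq T_C$ has $\mu_C(\mathcal{F})=+\infty=\mu_C(G/\mathcal{F})$, and \emph{that} violates stability. More seriously, your final step invokes ``positivity of $H$'' to exclude one-dimensional support, but $H$ is only assumed nef and relatively ample, so horizontal curves $C'$ with $H\cdot C'=0$ can exist. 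Concretely, on $\mathbb{P}^2\times C$ with $H$ the pullback of $\mathcal{O}_{\mathbb{P}^2}(1)$, the sheaf $G=\mathcal{O}_{\{\mathrm{pt}\}\times C}$ lies in $\mathcal{T}_{\beta H}$, has $\ch_0=HF\ch_1^{\beta}=H\ch_2^{\beta}=0$, yet $\mathcal{H}^0(G)=G$ is one-dimensional. So the assertion $\mathcal{H}^0(\mathcal{E})\in\Coh_{\le 0}(\mathcal{X})$ genuinely fails unless one assumes $H$ ample or restricts to $C$-torsion $\mathcal{E}$ (which is how it is used in Proposition \ref{Noether}). The crucial inequality $\ch_3^{\beta}(\mathcal{E})\ge 0$ may still survive, but not via the route you outline.
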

\begin{proof}
The proof is the same as that of \cite[Lemma 3.14]{Sun2}.
\end{proof}

\subsection{Properties of relative tilt-stability}
The relative tilt-stability is unchanged under tensoring with line
bundles $\mathcal{O}_{\mathcal{X}}(aF)$ for any $a\in\mathbb{Z}$:
\begin{lemma}\label{tensor}
If $\mathcal{E}\in\Coh_C^{\beta H}(\mathcal{X})$ is
$\nu_{H,F}^{\alpha,\beta}$-(semi)stable, then
$\mathcal{E}(aF)\in\Coh_C^{\beta H}(\mathcal{X})$ and
$\mathcal{E}(aF)$ is also $\nu_{H,F}^{\alpha,\beta}$-(semi)stable
for any $a\in \mathbb{Z}$.
\end{lemma}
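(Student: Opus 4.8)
The plan is to reduce the statement about relative tilt-stability of $\mathcal{E}(aF)$ to the corresponding statement on the generic fiber, where the analogous fact for the Bayer--Macr\`i--Toda tilt-stability is available. First I would check that tensoring by $\mathcal{O}_{\mathcal{X}}(aF)$ preserves the heart $\Coh_C^{\beta H}(\mathcal{X})$. Since $aF=f^*(ap)$ for a point $p\in C$, the line bundle $\mathcal{O}_{\mathcal{X}}(aF)$ is pulled back from $C$, so $\ch_1(\mathcal{O}_{\mathcal{X}}(aF))=aF$ has zero intersection with both $H^2$ and $HF$; consequently $\mu_C(\mathcal{E}(aF))=\mu_C(\mathcal{E})$ for every sheaf with $\ch_0\neq 0$, and likewise on the $C$-torsion part the slope is unchanged because $H^2\ch_1$ and $H\ch_2$ are unaffected by twisting with $\mathcal{O}_{\mathcal{X}}(aF)$ (this is exactly the invariance of $\widetilde{\Delta}^{\beta H}_{H,F}$ under such twists noted in the excerpt, applied at the level of the numerator and denominator of $\mu_C$). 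Hence $\otimes\,\mathcal{O}_{\mathcal{X}}(aF)$ is an exact autoequivalence of $\Coh(\mathcal{X})$ sending $\mathcal{T}_{\beta H}$ to $\mathcal{T}_{\beta H}$ and $\mathcal{F}_{\beta H}$ to $\mathcal{F}_{\beta H}$, so it preserves $\Coh_C^{\beta H}(\mathcal{X})=\langle\mathcal{T}_{\beta H},\mathcal{F}_{\beta H}[1]\rangle$.

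Next I would verify that the relative tilt-slope $\nu_{H,F}^{\alpha,\beta}$ is invariant under the twist. A direct computation with $\ch^{\beta}(\mathcal{E}(aF))$ shows that $HF\ch_1^{\beta}(\mathcal{E}(aF))=HF\ch_1^{\beta}(\mathcal{E})$ (since $HF\cdot F=0$), that $FH^2\ch_0$ is unchanged, and that $F\ch_2^{\beta}(\mathcal{E}(aF))=F\ch_2^{\beta}(\mathcal{E})+aF\cdot F\cdot HF\ch_1^\beta(\mathcal E)/(HF)=F\ch_2^\beta(\mathcal E)$ because $F^2=0$ on $\mathcal{X}$. Therefore both numerator and denominator of $\nu_{H,F}^{\alpha,\beta}$ are unchanged, so $\nu_{H,F}^{\alpha,\beta}(\mathcal{E}(aF))=\nu_{H,F}^{\alpha,\beta}(\mathcal{E})$, and the same holds for every subobject. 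Since $\otimes\,\mathcal{O}_{\mathcal{X}}(aF)$ is an exact equivalence of the abelian category $\Coh_C^{\beta H}(\mathcal{X})$, it induces a bijection on subobjects preserving slopes, and (semi)stability is immediate from the definition.

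Alternatively — and this is the cleaner route if one wants to avoid the intersection-number bookkeeping — I would invoke Lemma \ref{K-stable}: after splitting off the $C$-torsion part (which is itself preserved by the twist, being the pushforward from a proper closed subscheme of $C$), it suffices to treat $C$-torsion free $\mathcal{E}$, where $\nu_{H,F}^{\alpha,\beta}$-(semi)stability is equivalent to $\nu_{K(C)}^{\alpha,\beta}$-(semi)stability of $\mathcal{E}_{K(C)}$. But $\mathcal{O}_{\mathcal{X}}(aF)$ restricts to the trivial bundle on the generic fiber $\mathcal{X}_{K(C)}$, so $(\mathcal{E}(aF))_{K(C)}\cong\mathcal{E}_{K(C)}$, and the claim follows at once. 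I expect the main point requiring care is simply the first step — confirming that the twist preserves the torsion pair $(\mathcal{T}_{\beta H},\mathcal{F}_{\beta H})$, equivalently that it preserves $\mu_C$-semistability and $\mu_C$-slopes — since everything after that is either a formal consequence of exactness of the equivalence or a restriction-to-generic-fiber argument; there is no genuine obstacle here, only the need to keep the intersection products straight.
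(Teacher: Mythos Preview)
Your proposal is correct and follows essentially the same approach as the paper: the paper's proof is a single sentence stating that the conclusion follows from the definition of $\Coh_C^{\beta H}(\mathcal{X})$ together with the equality $\nu_{H,F}^{\alpha,\beta}(\mathcal{E})=\nu_{H,F}^{\alpha,\beta}(\mathcal{E}(aF))$, and your main argument simply unpacks these two ingredients (preservation of the torsion pair via invariance of $\mu_C$, and invariance of the relative tilt-slope via $F^2=0$). Your alternative route through Lemma~\ref{K-stable} and restriction to the generic fiber is an extra observation not in the paper, but it is not needed.
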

\begin{proof}
The conclusion follows directly from the definition of
$\Coh_C^{\beta H}(\mathcal{X})$ and the equality
$\nu_{H,F}^{\alpha,\beta}(\mathcal{E})=\nu_{H,F}^{\alpha,\beta}(\mathcal{E}(aF))$
for any $a\in\mathbb{Z}$.
\end{proof}

The Bogomolov type inequality holds for relative tilt-stable
complexes:

\begin{theorem}\label{Bog-tilt}
If $\mathcal{E}\in\Coh_C^{\beta H}(\mathcal{X})$ is
$\nu_{H,F}^{\alpha,\beta}$-semistable, then
$$\overline{\Delta}_{H,F}(\mathcal{E})\geq0.$$
\end{theorem}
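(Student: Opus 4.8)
The plan is to reduce the statement about relative tilt-semistable objects to the already-established Bogomolov-type inequality for $\mu_{H,F}$-semistable torsion free sheaves (Theorem \ref{Bog1}), following the classical strategy from \cite{BMT, BMS}. First I would observe that $\overline{\Delta}_{H,F}$ depends only on $\ch_0$, $HF\ch_1$, and $F\ch_2$, and that it is invariant under tensoring by $\mathcal{O}_{\mathcal{X}}(aF)$ (Lemma \ref{tensor}) and under the twist $\ch \mapsto \ch^{\beta}$; so the claim is a statement purely about these three numerical invariants. The key point is that these are exactly the data that control $\nu_{H,F}^{\alpha,\beta}$-stability, so one can run a wall-and-chamber argument in the $(\alpha,\beta)$-upper half plane.

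Next I would set up the standard continuity/limiting argument. Fix the numerical class $v = \ch(\mathcal{E})$ and consider the locus of $(\alpha,\beta)$ where $\mathcal{E}$ is $\nu_{H,F}^{\alpha,\beta}$-semistable. Using Lemma \ref{K-stable} one passes to the generic fiber, where $\nu_{K(C)}^{\alpha,\beta}$-stability is the usual tilt-stability on a surface-like situation over $K(C)$; there the theory of walls is locally finite, so if $\overline{\Delta}_{H,F}(\mathcal{E}) < 0$ one can deform $(\alpha,\beta)$ — in particular let $\alpha \to \infty$ or move $\beta$ to a wall — to reach a chamber where $\mathcal{E}$ destabilizes, or alternatively where its Harder-Narasimhan/Jordan-Hölder factors are themselves $\nu$-semistable objects whose classes have strictly larger discriminant. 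Iterating, one arrives at semistable objects with $HF\ch_1^{\beta} = 0$, handled by Lemma \ref{ch2}(1)--(2) (which forces $F\ch_2^\beta \geq 0$ and $\ch_0 \leq 0$, giving $\overline{\Delta}_{H,F} \geq 0$ directly), or at the large-volume limit where $\nu_{H,F}^{\alpha,\beta}$-semistable objects are, up to shift, $\mu_{H,F}$-semistable torsion free sheaves, so Theorem \ref{Bog1} applies.

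A cleaner packaging, which I would probably adopt, is: if $\overline{\Delta}_{H,F}(\mathcal{E}) < 0$ then the function $\nu_{H,F}^{\alpha,\beta}$ of $\mathcal{E}$ compared with that of a potential subobject produces a \emph{numerical wall} that is a semicircle in the $(\alpha,\beta)$-plane, and the negativity of the discriminant forces this wall to be empty or nested inside all others in a way incompatible with $\mathcal{E}$ staying semistable across it; more precisely, the set of actual walls for $v$ is locally finite (on the generic fiber, by \cite{BMS} Appendix), so $\mathcal{E}$ remains semistable on an unbounded chamber, and there the structure theorem for tilt-semistable objects in the large-$\alpha$ limit reduces us to Theorem \ref{Bog1}. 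The main obstacle will be making the wall-crossing and the ``large volume'' limit rigorous in the relative setting: one needs that $\Coh_C^{\beta H}(\mathcal{X})$ behaves well under varying $\beta$ (which requires $\beta \in \mathbb{Q}$ and the noetherianity of Proposition \ref{Noether}), that $C$-torsion subobjects can be split off (the $C$-torsion theory from the proof of Lemma \ref{HN}), and that the Harder-Narasimhan factors obtained after deformation stay in a controlled bounded family so the induction on $\overline{\Delta}_{H,F}$ terminates. Handling the singular fibers of $f$ requires care, but since $\overline{\Delta}_{H,F}$ and $\mu_{H,F}$-stability only see the generic fiber (Lemma \ref{f-stable}, Lemma \ref{K-stable}), one can work over $K(C)$ throughout and invoke the surface case there.
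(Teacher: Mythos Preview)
Your final observation---work over $K(C)$ and invoke the surface case---is exactly the paper's proof, but you are doing far more work than needed. The paper's argument is a two-line citation: by Lemma \ref{K-stable}, $\mathcal{E}_{K(C)}$ is $\nu_{K(C)}^{\alpha,\beta}$-semistable on the surface $\mathcal{X}_{K(C)}$, and then \cite[Theorem 3.5]{BMS} (the Bogomolov inequality for tilt-semistable objects) gives $\overline{\Delta}_{H,F}(\mathcal{E})\geq 0$ directly, since $\overline{\Delta}_{H,F}$ is computed entirely from intersections with $F$ and hence equals the discriminant of $\mathcal{E}_{K(C)}$. The $C$-torsion part of $\mathcal{E}$ contributes nothing to $\overline{\Delta}_{H,F}$, so there is no need to split it off explicitly.

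All of your wall-crossing, large-volume, and induction-on-discriminant machinery is the \emph{content} of \cite[Theorem 3.5]{BMS} (or \cite[Corollary 7.3.2]{BMT}), not something to be reproved here. Your sketch of that argument is broadly correct, but there is no reason to unpack it: once you have passed to the generic fiber you are in the standard surface setting and can simply cite the result. The concerns you raise about noetherianity, singular fibers, and controlling Harder--Narasimhan factors in the relative setting are all bypassed by this reduction.
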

\begin{proof}
The result follows from \cite[Theorem 3.5]{BMS} and Lemma
\ref{K-stable}.
\end{proof}

Let $v:\K(\D^b(\mathcal{X}))\rightarrow\Lambda:=
\mathbb{Z}\oplus\mathbb{Z}\oplus\frac{1}{2}\mathbb{Z}$ be the map
given by $$v(\mathcal{E})=(H^2F\ch_0(\mathcal{E}),
HF\ch_1(\mathcal{E}), F\ch_2(\mathcal{E})).$$ Notice that
$\nu_{H,F}^{\alpha,\beta}$ factors through $v$. A numerical wall in
relative tilt-stability with respect to a class $u\in \Lambda$ is a
non trivial proper subset $W$ of the $(\alpha, \beta)$-plane
$\mathbb{R}_{>0}\times\mathbb{R}$ given by an equation of the form
$\nu_{H,F}^{\alpha,\beta}(u)=\nu_{H,F}^{\alpha,\beta}(w)$ for
another class $w\in \Lambda$. By Lemma \ref{K-stable}, one can
translate some basic properties of the structure of walls in
tilt-stability into relative tilt-stability. Part (1)-(3) is usually
called ``Bertram's Nested Wall Theorem'' and proved in \cite{Bri2},
\cite{Maci} and \cite[Lemma 4.3]{BMS}, while part (4), (5), (6) and
(7) are in Appendix 1 and Lemma 2.7 of \cite{BMS}.

\begin{proposition}\label{Wall}
Let $\mathcal{E}$ be an object in $\D^b(\mathcal{X})$ and $u\in
\Lambda$ a fixed class.
\begin{enumerate}
\item Numerical walls with respect to $u$ are either semicircles
with centers on the $\beta$-axis or rays parallel to the
$\alpha$-axis.

\item If two numerical walls intersect, then the two walls are
completely identical.
\item If $\mathcal{E}$ is
$\nu_{H,F}^{\alpha,\beta}$-semistable with
$\nu_{H,F}^{\alpha,\beta}(\mathcal{E})\neq+\infty$, then the object
$\mathcal{E}$ is $\nu_{H,F}^{\alpha,\beta}$-semistable along the
semicircle $\mathcal{C}_{\alpha,\beta}(\mathcal{E})$ in the
$(\alpha, \beta)$-plane with center $(0,
\beta+\nu_{H,F}^{\alpha,\beta}(\mathcal{E}))$ and radius
$\sqrt{\alpha^2+(\nu_{H,F}^{\alpha,\beta}(\mathcal{E}))^2}$.

\item If there is an exact sequence of $\nu_{H, F}^{\alpha,
\beta}$-semistable objects
$\mathcal{F}\hookrightarrow\mathcal{E}\twoheadrightarrow\mathcal{G}$
such that $\nu_{H, F}^{\alpha, \beta}(\mathcal{F})=\nu_{H,
F}^{\alpha, \beta}(\mathcal{G})$, then
$\overline{\Delta}_{H,F}(\mathcal{F})+\overline{\Delta}_{H,F}(\mathcal{G})\leq\overline{\Delta}_{H,F}(\mathcal{E})$.
Moreover, equality holds if and only if either $v(\mathcal{F})=0$,
$v(\mathcal{G})=0$, or both $\overline{\Delta}_{H,F}(\mathcal{E})=0$
and $v(\mathcal{F})$, $v(\mathcal{G})$ and $v(\mathcal{E})$ are all
proportional.
\item If $\overline{\Delta}_{H,F}(\mathcal{E})=0$ for a tilt
semistable object $\mathcal{E}$, then $\mathcal{E}$ can only be
destabilized at the unique numerical vertical wall.

\item Let $\mathcal{F}$ be a $\mu_{C}$-stable locally free sheaf on
$\mathcal{X}$ with $\overline{\Delta}_{H,F}(\mathcal{F})=0$. Then
$\mathcal{F}$ or $\mathcal{F}[1]$ is a $\nu_{H,F}^{\alpha,
\beta}$-stable object in $\Coh_C^{ \beta H}(\mathcal{X})$.

\item Let $\mathcal{F}$ be a $\mu_{H,F}$-stable torsion free sheaf on
$\mathcal{X}$. If $\mu^-_C(\mathcal{F})>\beta$, then
$\mathcal{F}\in\Coh_C^{\beta H}(\mathcal{X})$ and it is
$\nu_{H,F}^{\alpha,\beta}$-stable for $\alpha\gg0$.
\end{enumerate}
\end{proposition}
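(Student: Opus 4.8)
The plan is to deduce every assertion from the corresponding statement for ordinary tilt-stability on the (smooth, since we are in characteristic zero) surface $\mathcal{X}_{K(C)}$ over the non-algebraically-closed field $K(C)$, using Lemma~\ref{K-stable} as the translation device and then quoting the surface results of \cite{Bri2}, \cite{Maci} and Appendices~1--2 of \cite{BMS}. One first records the elementary but essential dictionary: by Lemma~\ref{Chern} (equivalently, because $F$ meets any fibre-supported cycle trivially), the degrees $H^2F\ch_i(\mathcal{E})$, $HF\ch_i(\mathcal{E})$, $F\ch_i(\mathcal{E})$, and hence $v(\mathcal{E})$, $\overline{\Delta}_{H,F}(\mathcal{E})$ and $\nu^{\alpha,\beta}_{H,F}(\mathcal{E})$, are all computed by the ordinary surface invariants of $\mathcal{E}_{K(C)}$ on $(\mathcal{X}_{K(C)},H_{K(C)})$; in particular any $C$-torsion object has $v=0$, $\overline{\Delta}_{H,F}=0$ and $\nu^{\alpha,\beta}_{H,F}=+\infty$.

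Items (1) and (2) involve no objects: writing $u=(u_0,u_1,u_2)\in\Lambda$ one has
$$\nu^{\alpha,\beta}_{H,F}(u)=\frac{u_2-\beta u_1+\tfrac12(\beta^2-\alpha^2)u_0}{u_1-\beta u_0},$$
which is formally the tilt-slope attached to the rank-three lattice of a polarised surface, so the set of numerical walls with respect to $u$ is literally the one analysed in \cite{Maci}, and (1)--(2) follow verbatim. For (3)--(5) the key preliminary remark is that a relative tilt-semistable object with $\nu^{\alpha,\beta}_{H,F}\neq+\infty$ is automatically $C$-torsion free: a non-zero $C$-torsion subobject would have $HF\ch^{\beta}_1=0$, hence slope $+\infty$, contradicting semistability of finite slope. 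Exactness of $(-)_{K(C)}$ then turns the short exact sequence in (4) into an exact sequence of tilt-semistable objects of equal slope on $\mathcal{X}_{K(C)}$; after base-changing to $\overline{K(C)}$ one applies \cite[Appendix~1]{BMS} for (4) and (5), and \cite[Lemma~4.3]{BMS} (combined with the surface form of the semicircle statement) for (3), and transports the conclusions back via Lemma~\ref{K-stable}. In (3) one must also check that $\mathcal{E}$ remains in $\Coh_C^{\beta'H}(\mathcal{X})$ as $\beta'$ runs along $\mathcal{C}_{\alpha,\beta}(\mathcal{E})$; this is the same heart-stability argument as on surfaces, using Lemma~\ref{ch2}. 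The objects with $\nu^{\alpha,\beta}_{H,F}=+\infty$ (all $C$-torsion objects among them) are dispatched separately and sit on the unique vertical wall.

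For (6) and (7) the only genuinely new ingredient is the interplay with $\mu_C$-stability, which controls membership in the tilted heart. If $\mathcal{F}$ is $\mu_C$-stable and torsion free of positive rank, then it is $\mu_{H,F}$-stable (a subsheaf equalising the relative slope would equalise the $\mu_C$-slope, contradicting $\mu_C$-stability) and $C$-torsion free, so $\mathcal{F}_{K(C)}$ is $\mu_{H_{K(C)}}$-stable, with $\Delta=0$ in case (6). According to whether $\mu_C(\mathcal{F})=\mu_{H_{K(C)}}(\mathcal{F}_{K(C)})$ is $>\beta$ or $\leq\beta$, exactly one of $\mathcal{F},\mathcal{F}[1]$ lies in $\Coh_C^{\beta H}(\mathcal{X})$ (the hypothesis $\mu_C^-(\mathcal{F})>\beta$ in (7) being precisely what puts $\mathcal{F}\in\mathcal{T}_{\beta H}$), and the surface statement \cite[Lemma~2.7]{BMS}---a $\mu$-stable sheaf with $\Delta=0$ is $\nu_{\alpha,\beta}$-stable for all $\alpha$, an arbitrary $\mu$-stable sheaf for $\alpha\gg0$---together with Lemma~\ref{K-stable} yields (6) and (7) respectively. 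The main obstacle I anticipate is bookkeeping rather than conceptual: making this dictionary between $\mathcal{X}$ and its generic fibre completely airtight---verifying that the surface tilt-stability package (nested walls, the discriminant inequality with its equality cases, large-volume stability) is available over $K(C)$ and $\overline{K(C)}$, and that heart membership together with the degenerate ``$v=0$''/vertical-wall cases are matched consistently under $(-)_{K(C)}$.
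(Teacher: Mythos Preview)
Your proposal is correct and follows precisely the route the paper itself indicates: the paper does not give a standalone proof but simply remarks, just before the proposition, that ``By Lemma~\ref{K-stable}, one can translate some basic properties of the structure of walls in tilt-stability into relative tilt-stability,'' and then cites \cite{Bri2}, \cite{Maci} and \cite[Lemma~4.3, Appendix~1, Lemma~2.7]{BMS} for parts (1)--(7). Your outline is a faithful (and more detailed) execution of exactly this strategy---reducing to the generic-fibre surface via Lemma~\ref{K-stable} and the numerical dictionary $v(\mathcal{E})\leftrightarrow(\ch_0,H_{K(C)}\ch_1,\ch_2)(\mathcal{E}_{K(C)})$---and the bookkeeping points you flag (heart membership along the semicircle, the $v=0$ degenerate cases, working over $K(C)$ versus $\overline{K(C)}$) are the right ones to check but present no real difficulty.
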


In order to reduce the relative tilt-stability to small $\alpha$, we
need the relative $\overline{\beta}$-stability which is a relative
version of $\overline{\beta}$-stability in \cite[Section 5]{BMS}.

\begin{definition}
For any $\mathcal{E}\in \Coh_C^{ \beta H}(\mathcal{X})$, we define
\begin{eqnarray*}
\overline{\beta}(\mathcal{E})= \left\{
\begin{array}{lcl}
\frac{HF\ch_1(\mathcal{E})-\sqrt{\overline{\Delta}_{H,F}(\mathcal{E})}}{H^2F\ch_0(\mathcal{E})},  & &\mbox{if}~\ch_0(\mathcal{E})\neq0,\\
&&\\
\frac{F\ch_2(\mathcal{E})}{HF\ch_1(\mathcal{E})}, &
&\mbox{otherwise}.
\end{array}\right.
\end{eqnarray*}
Moreover, we say that $\mathcal{E}$ is relative
$\overline{\beta}$-(semi)stable, if it is (semi)stable in a
neighborhood of $(0, \overline{\beta}(\mathcal{E}))$.
\end{definition}

By this definition we have
$F\ch^{\overline{\beta}(\mathcal{E})}_2(\mathcal{E})=0$.

\section{Conjectures and constructions}\label{S3}
In this section, we recall the definition of stability conditions on
triangulated category introduced by Bridgeland in \cite{Bri1} and
generalize \cite[Conjecture 5.2]{Sun2} to arbitrary relative
tilt-semistable objects with more precise form. Using this
conjecture, we then give a construction of stability conditions on
$\mathcal{X}$.

\subsection{Stability conditions}
Let $\mathcal{D}$ be a triangulated category, for which we fix a
finitely generated free abelian group $\Lambda$ and a group
homomorphism $v:\K(\mathcal{D})\rightarrow\Lambda$.
\begin{definition}\label{pre}
A stability condition on $\mathcal{D}$ is a pair $\sigma=(Z,
\mathcal{A})$, where $\mathcal{A}$ is the heart of a bounded
t-structure on $\mathcal{D}$, and $Z:\Lambda\rightarrow \mathbb{C}$
is a group homomorphism (called central charge) such that
\begin{enumerate}
\item $Z$ satisfies the following positivity property for any $0\neq\mathcal{E}\in
\mathcal{A}$:
$$Z(v(\mathcal{E}))\in\{re^{i\pi\phi}: r>0, 0<\phi\leq1\}.$$
\item $(Z, \mathcal{A})$ satisfies the Harder-Narasimhan
property: every object of $\mathcal{A}$ has a Harder-Narasimhan
filtration in $\mathcal{A}$ with respect to
$\nu_{\sigma}$-stability, here the slope $\nu_{\sigma}$ of an object
$\mathcal{E}\in \mathcal{A}$ is defined by
\begin{eqnarray*}
\nu_{\sigma}(\mathcal{E})= \left\{
\begin{array}{lcl}
+\infty,  & &\mbox{if}~\Im Z(v(\mathcal{E}))=0,\\
&&\\
-\frac{\Re Z(v(\mathcal{E}))}{\Im Z(v(\mathcal{E}))}, &
&\mbox{otherwise}.
\end{array}\right.
\end{eqnarray*}
\end{enumerate}
\end{definition}

We say $\mathcal{E}\in\mathcal{A}$ is $\nu_{\sigma}$-(semi)stable if
for any non-zero subobject $\mathcal{F}\subset \mathcal{E}$ in
$\mathcal{A}$, we have
$$\nu_{\sigma}(\mathcal{F})<(\leq)\nu_{\sigma}(\mathcal{E}/\mathcal{F}).$$
The Harder-Narasimhan filtration of an object $\mathcal{E}\in
\mathcal{A}$ is a chain of subobjects
$$0=\mathcal{E}_0\subset \mathcal{E}_1\subset\cdots\subset \mathcal{E}_m=\mathcal{E}$$ in $\mathcal{A}$ such
that $\mathcal{G}_i:=\mathcal{E}_i/\mathcal{E}_{i-1}$ is
$\nu_{\sigma}$-semistable and
$\nu_{\sigma}(\mathcal{G}_1)>\cdots>\nu_{\sigma}(\mathcal{G}_m)$. We
set $\nu_{\sigma}^+(\mathcal{E}):=\nu_{\sigma}(\mathcal{G}_1)$ and
$\nu_{\sigma}^-(\mathcal{E}):=\nu_{\sigma}(\mathcal{G}_m)$.

\begin{definition}\label{slicing}
For a stability condition $(Z, \mathcal{A})$ on $\mathcal{D}$ and
for $0<\phi\leq1$, we define the subcategory
$\mathcal{P}(\phi)\subset\mathcal{D}$ to be the category of
$\nu_{\sigma}$-semistable objects $\mathcal{E}\in\mathcal{A}$
satisfying $\tan(\pi\phi)=-1/\nu_{\sigma}(\mathcal{E})$. For other
$\phi\in \mathbb{R}$ the subcategory $\mathcal{P}(\phi)$ is defined
by the rule:
$$\mathcal{P}(\phi+1)=\mathcal{P}(\phi)[1].$$ The objects in $\mathcal{P}(\phi)$ is still called $\nu_{\sigma}$-semistable
objects.
\end{definition}

For an interval $I=(a,b)\subset\mathbb{R}$, we denote by
$\mathcal{P}(I)$ the extension-closure of
$$\bigcup_{\phi\in I}\mathcal{P}(\phi)\subset\mathcal{D}.$$ $\mathcal{P}(I)$ is a
quasi-abelian category when $b-a<1$ (cf. \cite[Definition
4.1]{Bri1}). If we have a distinguished triangle
$$A_1\xrightarrow{h} A_2\xrightarrow{g} A_3\rightarrow A_1[1]$$ with
$A_1,A_2,A_3\in \mathcal{P}(I)$, we say $h$ is a strict monomorphism
and $g$ is a strict epimorphism. Then we say that $\mathcal{P}(I)$
is of finite length if $\mathcal{P}(I)$ is Noetherian and Artinian
with respect to strict epimorphisms and strict monomorphisms,
respectively.

\begin{definition}
A stability condition $\sigma=(Z,\mathcal{A})$ is called locally
finite if there exists $\varepsilon>0$ such that for any $\phi\in
\mathbb{R}$, the quasi-abelian category
$\mathcal{P}((\phi-\varepsilon, \phi+\varepsilon))$ is of finite
length.
\end{definition}

\begin{definition}\label{supp}
We say a stability condition $\sigma=(Z,\mathcal{A})$ satisfies the
support property if there is a quadratic form $Q$ on $\Lambda$
satisfying $Q(v(\mathcal{E}))\geq0$ for any
$\nu_{\sigma}$-semistable object $\mathcal{E}\in\mathcal{A}$, and
$Q|_{\ker Z}$ is negative definite.
\end{definition}

\begin{remark}\label{remark3.5}
The local finiteness condition automatically follows if the support
property is satisfied (cf. \cite[Section 1.2]{KS} and \cite[Lemma
4.5]{Bri2}).
\end{remark}

\subsection{Conjectures}
We now generalize \cite[Conjecture 5.2]{Sun2} to arbitrary relative
tilt-semistable objects:
\begin{conjecture}\label{conj1}
Assume that $\mathcal{E}\in\Coh_C^{ \beta H}(\mathcal{X})$ is
$\nu^{\alpha,\beta}_{H, F}$-semistable for some $(\alpha,\beta)$ in
$\mathbb{R}_{>0}\times\mathbb{R}$ with $\nu^{\alpha,\beta}_{H,
F}(\mathcal{E})\neq+\infty$. Then
\begin{eqnarray}\label{3.1}
&&\left(F\ch_2^{\beta}(\mathcal{E})-\frac{\alpha^2}{2}H^2F\ch_0(\mathcal{E})\right)\left(H\ch_2^{\beta}(\mathcal{E})-\frac{H^3}{3H^2F}F\ch_2^{\beta}(\mathcal{E})\right)\\
\nonumber
&\geq&\left(\ch_3^{\beta}(\mathcal{E})-\frac{\alpha^2}{2}H^2\ch_1^{\beta}(\mathcal{E})+\frac{\alpha^2H^3}{3H^2F}HF\ch_1^{\beta}(\mathcal{E})\right)HF\ch_1^{\beta}(\mathcal{E}).
\end{eqnarray}
\end{conjecture}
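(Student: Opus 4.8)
The plan is to prove Conjecture \ref{conj1} for $\mathcal{X}=\mathbb{P}(E)$ by reducing the statement in several stages to a situation where the classical Bogomolov inequality on fibres (which holds on all smooth fibres of $f$, hence on every fibre of $\mathbb{P}(E)\to C$ since these are all $\mathbb{P}^2$) does the work. First I would record the standard reduction steps available for tilt-stability via Proposition \ref{Wall}: using Lemma \ref{tensor} I may twist by $\mathcal{O}_{\mathcal{X}}(aF)$ freely, and using the wall-and-chamber structure (Proposition \ref{Wall}(1)--(3)) together with the semicircle $\mathcal{C}_{\alpha,\beta}(\mathcal{E})$ through a given stable object, I may move $(\alpha,\beta)$ along that semicircle. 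The key point is that the inequality \eqref{3.1} is (after clearing denominators) a quadratic expression in the numerical class $v(\mathcal{E})$ together with $\ch_3^\beta$, $H\ch_2^\beta$, $H^2\ch_1^\beta$, and one checks that its truth is invariant under the wall-crossing reductions — i.e.\ it behaves well with respect to the operations that preserve tilt-semistability along a wall. So as in \cite{BMS, BMSZ}, I would reduce to proving it for objects that are tilt-semistable for $\alpha\ll 1$ near the point $(0,\overline{\beta}(\mathcal{E}))$, i.e.\ for relative $\overline{\beta}$-stable objects, where by construction $F\ch_2^{\overline{\beta}}(\mathcal{E})=0$.

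At the point $\beta=\overline{\beta}(\mathcal{E})$ and $\alpha\to 0$ the left-hand side of \eqref{3.1} degenerates: the first factor becomes $-\tfrac{\alpha^2}{2}H^2F\ch_0$ and $F\ch_2^\beta=0$, so up to higher order in $\alpha$ the inequality asks for a lower bound involving $H\ch_2^\beta\cdot$(something) on the right. The strategy then is the classical one: an object with $F\ch_2^{\overline\beta}=0$ and $HF\ch_1^{\overline\beta}>0$ has, by Lemma \ref{ch2} and the structure of $\Coh_C^{\beta H}$, a cohomology sheaf description that lets me compare it with an honest $\mu_{H,F}$-semistable (or $\mu_C$-semistable) sheaf on $\mathcal{X}$, and then restrict to a general fibre $F\cong\mathbb{P}^2$. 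On $\mathbb{P}^2$ the relevant inequality between $\ch_0,\ch_1,\ch_2,\ch_3$ for a slope-semistable sheaf is exactly the classical Bogomolov/Hirzebruch–Riemann–Roch inequality (equivalently: on $\mathbb{P}^2$ tilt-stable objects satisfy the BG inequality, proved in \cite{BMT}). I would then use Lemma \ref{Chern} to translate between $\ch_{\mathcal{X}_p,\bullet}$ and the pushforward Chern classes $H\ch_2^\beta$, $\ch_3^\beta$, etc., and assemble the fibrewise inequality together with the global relative Bogomolov inequalities $\overline{\Delta}_{H,F}\geq 0$ and $\widetilde{\Delta}_{H,F}\geq 0$ (Theorem \ref{Bog1}, Theorem \ref{Bog-tilt}) to obtain \eqref{3.1}. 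The explicit ruled geometry enters here: for $\mathcal{X}=\mathbb{P}(E)$ one has $H^3$, $H^2F$, $HF^2=0$, $F^2=0$ related by $H^3 = \deg E + \ldots$ and $H^2F$ a positive integer, and the coefficient $\tfrac{H^3}{3H^2F}$ in \eqref{3.1} is precisely what makes the fibrewise $\mathbb{P}^2$-inequality glue with the base-curve contribution.

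Once Conjecture \ref{conj1} is established for $\mathbb{P}(E)$, the second assertion of Theorem \ref{thm1.1} — existence of stability conditions with the support property — follows from Theorem \ref{main} (quoted from the paper's Section \ref{S3}), since the classical Bogomolov inequality does hold on every fibre of $\mathbb{P}(E)\to C$; so that half is formal. Finally, Corollary \ref{cor1} should drop out by specializing Conjecture \ref{conj1} to the large-volume limit $\alpha\to\infty$ with $\mathcal{E}$ an honest $\mu_{H,F}$-semistable torsion free sheaf (using Proposition \ref{Wall}(7) to know such sheaves are tilt-stable for $\alpha\gg0$ after a suitable twist), then reading off the $\alpha^2$-coefficients of \eqref{3.1}; the $\beta$-independence of $\overline{\Delta}_{H,F}$ and the twist-invariance of $\widetilde{\Delta}_{H,F}^{\beta H}$ let one set $\beta=0$.

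\textbf{Main obstacle.} The hard part will be the wall-crossing reduction in the first paragraph: unlike the absolute case on a threefold, here one must control \emph{relative} tilt-stability, so one has to track the $C$-torsion subobjects (Lemma \ref{HN}, Proposition \ref{Noether}) through each wall-crossing and verify that \eqref{3.1} — which mixes the "vertical" data $F\ch_\bullet$ with the "horizontal" data $H\ch_\bullet$ and $\ch_3$ — is additive/convex in the right way across a short exact sequence of tilt-semistable objects of equal slope. Getting the numerical bookkeeping right so that the $\mathbb{P}^2$-fibre inequality, the two relative Bogomolov inequalities, and the intersection numbers on $\mathbb{P}(E)$ combine with the correct signs into \eqref{3.1} is where the real work lies; the fibrewise input itself is classical.
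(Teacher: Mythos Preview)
Your wall-crossing reduction to relative $\overline{\beta}$-semistable objects is correct and matches the paper's structure (via Theorem \ref{thm3.8} reducing to Conjecture \ref{conj2}, then induction on $\overline{\Delta}_{H,F}$ using Proposition \ref{Wall}(4)). The gap is in what you do \emph{at} $\overline{\beta}$. Restricting a $C$-torsion free object $\mathcal{E}$ to a fibre $\mathcal{X}_p\cong\mathbb{P}^2$ only sees $(\ch_0, HF\ch_1, F\ch_2)$-type data --- precisely $v(\mathcal{E})$ --- and the surface Bogomolov inequality for $\mathcal{E}_p$ returns nothing beyond $\overline{\Delta}_{H,F}(\mathcal{E})\geq 0$, which you already have from Theorem \ref{Bog-tilt}. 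Neither this nor $\widetilde{\Delta}_{H,F}\geq 0$ constrains $\ch_3(\mathcal{E})$ at all, and $\ch_3$ of a global object simply cannot be read off from its restrictions to fibres. (Your phrase ``inequality between $\ch_0,\ch_1,\ch_2,\ch_3$ on $\mathbb{P}^2$'' is a symptom of this confusion: a surface has no $\ch_3$.) So the fibrewise approach cannot supply the bound on $\ch_3^\beta$ that \eqref{3.2} demands.

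The paper's actual argument for the $\overline{\beta}$-semistable base case (Proposition \ref{bar}) is global and cohomological, not fibrewise. After normalising to $0\leq\overline{\beta}(\mathcal{E})<1$, one observes via Proposition \ref{Wall}(6) and slope comparison that $\mathcal{O}_{\mathcal{X}}(nH)$ and $\mathcal{O}_{\mathcal{X}}(K_{\mathcal{X}}+nH)[1]$ are $\nu$-stable with slopes strictly above and below that of $\mathcal{E}$ for $n=1,2$; this forces $\Hom(\mathcal{O}_{\mathcal{X}}(nH),\mathcal{E})=\Ext^2(\mathcal{O}_{\mathcal{X}}(nH),\mathcal{E})=0$, hence $\chi(\mathcal{O}_{\mathcal{X}}(nH),\mathcal{E})\leq 0$. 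Hirzebruch--Riemann--Roch on $\mathcal{X}$, with the explicit Todd class of $\mathbb{P}(E)$ from Lemma \ref{can}, converts these into two linear inequalities in $\ch_3, H\ch_2, F\ch_2, HF\ch_1$. The decisive trick is then Lemma \ref{tensor}: the same inequalities hold for $\mathcal{E}(aF)$ for \emph{every} $a\in\mathbb{Z}$, and letting $a$ vary forces $HF\ch_1=F\ch_2=H\ch_2=0$ and $\ch_3\leq 0$. So the ruled geometry enters through the Todd class and the twist by $F$, not through any fibrewise restriction. Your remarks on deducing Corollary \ref{cor1} and the existence of stability conditions once Conjecture \ref{conj1} is known are correct.
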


We will show that Conjecture \ref{conj1} follows from a more natural
and seemingly weaker statement:
\begin{conjecture}\label{conj2}
Assume that $\mathcal{E}\in\Coh_C^{ \beta H}(\mathcal{X})$ is
$\nu^{\alpha,\beta}_{H, F}$-semistable for some $(\alpha,\beta)$ in
$\mathbb{R}_{>0}\times\mathbb{R}$ with $\nu^{\alpha,\beta}_{H,
F}(\mathcal{E})=0$. Then
\begin{equation}\label{3.2}
\ch_3^{\beta}(\mathcal{E})\leq\frac{\alpha^2}{2}H^2\ch_1^{\beta}(\mathcal{E})-\frac{\alpha^2H^3}{3H^2F}HF\ch_1^{\beta}(\mathcal{E}).
\end{equation}
\end{conjecture}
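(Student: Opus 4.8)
The plan is to establish the bound \eqref{3.2} on $\ch_3^{\beta}(\mathcal{E})$ by reducing it, through the wall-and-chamber structure of relative tilt-stability, to the Bogomolov inequality on the $\mathbb{P}^2$-fibers of $f$, which holds on every fiber when $\mathcal{X}=\mathbb{P}(E)$. Note first that the hypothesis $\nu^{\alpha,\beta}_{H,F}(\mathcal{E})=0$ gives $F\ch_2^{\beta}(\mathcal{E})=\frac{\alpha^2}{2}H^2F\ch_0(\mathcal{E})$, while $\nu\ne+\infty$ forces $HF\ch_1^{\beta}(\mathcal{E})>0$ by Lemma \ref{ch2}; thus \eqref{3.2} is exactly the slope-zero specialization of \eqref{3.1}, and its content is a genuine upper bound for $\ch_3^{\beta}(\mathcal{E})$ not implied by any of the $\ch_2$-level inequalities already available.

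First I would reduce to small $\alpha$. By Proposition \ref{Wall}(3), $\mathcal{E}$ stays $\nu_{H,F}$-semistable along the semicircular wall $\mathcal{C}_{\alpha,\beta}(\mathcal{E})$ centred on the $\beta$-axis; together with the nested-wall structure of Proposition \ref{Wall}(1)--(2) and the inequality $\overline{\Delta}_{H,F}(\mathcal{E})\ge 0$ of Theorem \ref{Bog-tilt}, this allows me to deform $(\alpha,\beta)$ toward the $\beta$-axis and reduce \eqref{3.2} to the case where $\mathcal{E}$ is relative $\overline{\beta}$-semistable, i.e. semistable in a neighbourhood of $(0,\overline{\beta}(\mathcal{E}))$, where $F\ch_2^{\overline{\beta}}(\mathcal{E})=0$. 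In the extremal case $\overline{\Delta}_{H,F}(\mathcal{E})=0$, Proposition \ref{Wall}(5) confines any remaining destabilization to the vertical wall, so it is enough to treat objects that remain genuinely semistable down to this limit.

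Next I would pass to the fibers. By Lemma \ref{K-stable}, $\mathcal{E}_{K(C)}$ is $\nu^{\alpha,\beta}_{K(C)}$-semistable on the generic fiber, and after analyzing the cohomology sheaves of $\mathcal{E}$ via Lemma \ref{ch2} and the construction of $\Coh_C^{\beta H}(\mathcal{X})$, the relevant pieces restrict, for $p$ in a dense open subset of $C$, to $\mu_{H_p}$-semistable sheaves on the $\mathbb{P}^2$-fiber $\mathcal{X}_p$ (using Lemma \ref{f-stable}). On each such fiber the Bogomolov inequality $(H\ch_2(i_{p*}E))^2\ge 2H^2\ch_1(i_{p*}E)\,\ch_3(i_{p*}E)$ holds. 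Using Lemma \ref{Chern} to rewrite the fibral invariants $H\ch_2$, $H^2\ch_1$ and $\ch_3$ of the pushforwards in terms of $F\ch_2^{\beta}$, $HF\ch_1^{\beta}$ and $\ch_3^{\beta}$ of $\mathcal{E}$ on the total space, and assembling the fiberwise estimates through relative Riemann--Roch along $f$, I would convert the $\mathbb{P}^2$-Bogomolov inequality into \eqref{3.2}; here the relative inequalities $\overline{\Delta}_{H,F}(\mathcal{E})\ge 0$ and $\widetilde{\Delta}^{\beta H}_{H,F}(\mathcal{E})\ge 0$ of Theorem \ref{Bog1} supply the remaining $\ch_2$-level terms, and the slope-zero normalization $F\ch_2^{\beta}=\frac{\alpha^2}{2}H^2F\ch_0$ matches the $\alpha^2$-coefficients on the right-hand side.

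I expect the decisive difficulty to be the control of $\ch_3^{\beta}(\mathcal{E})$ itself. The classical, relative and tilt Bogomolov inequalities (Theorems \ref{Bog}, \ref{Bog1}, \ref{Bog-tilt}) only bound the second Chern character, so a genuinely new ingredient is unavoidable: the fibral Bogomolov inequality on $\mathbb{P}^2$, together with the explicit Chern data of the tautological bundle $\mathcal{O}_{\mathbb{P}(E)}(1)$ and the relative Euler sequence for $\mathbb{P}(E)\to C$. The subtle point is to carry these fibral bounds back to the total space uniformly in $p$ while controlling the variation of $\mathcal{E}|_{\mathcal{X}_p}$ as $p$ moves along $C$, and to rule out, at the vertical wall reached in the small-$\alpha$ limit, a destabilizing subobject that would otherwise obstruct the passage from fibral to total-space invariants.
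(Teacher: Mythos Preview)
Your approach has a genuine gap at the core step ``pass to the fibers''. The restriction $\mathcal{E}_p=i_p^*\mathcal{E}$ carries only the Chern data $\ch_0(\mathcal{E})$, $HF\ch_1(\mathcal{E})$, $F\ch_2(\mathcal{E})$; indeed Lemma~\ref{Chern}(1) gives $\ch_j(i_{p*}\mathcal{E}_p)=F\ch_{j-1}(\mathcal{E})$, so in particular $\ch_3(i_{p*}\mathcal{E}_p)=F\ch_2(\mathcal{E})$, \emph{not} $\ch_3(\mathcal{E})$. Hence the fibral Bogomolov inequality $(H\ch_2(i_{p*}\mathcal{E}_p))^2\ge 2H^2\ch_1(i_{p*}\mathcal{E}_p)\,\ch_3(i_{p*}\mathcal{E}_p)$ is literally $(HF\ch_1)^2\ge 2H^2F\ch_0\cdot F\ch_2$, i.e.\ $\overline{\Delta}_{H,F}(\mathcal{E})\ge 0$, which is already Theorem~\ref{Bog-tilt} and says nothing about $\ch_3(\mathcal{E})$. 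No amount of relative Riemann--Roch along $f$ recovers $\ch_3(\mathcal{E})$ from fibral invariants alone, because $\ch_3$ genuinely encodes variation along $C$. The ``Bogomolov inequality on fibers'' in the paper's sense is a statement about sheaves \emph{supported on} a fiber (i.e.\ $C$-torsion objects), not about restrictions of $C$-flat objects; it is used only in Lemma~\ref{ch2}(3).

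The paper's argument is entirely different and takes place on the total space. For a relative $\overline{\beta}$-semistable $\mathcal{E}$ one compares slopes with the $\nu_{H,F}^{\alpha,\beta}$-stable line bundles $\mathcal{O}_{\mathcal{X}}(nH)$ and $\mathcal{O}_{\mathcal{X}}(K_{\mathcal{X}}+nH)[1]$ for $n=1,2$ (Proposition~\ref{Wall}(6)), obtains $\Hom(\mathcal{O}(nH),\mathcal{E})=\Ext^2(\mathcal{O}(nH),\mathcal{E})=0$ by stability and Serre duality, and hence $\chi(\mathcal{O}(nH),\mathcal{E})\le 0$. Hirzebruch--Riemann--Roch on $\mathcal{X}$ (using the explicit Chern classes of Lemma~\ref{can}) turns these into two linear inequalities in $\ch_3$, $H\ch_2$, $F\ch_2$, $HF\ch_1$; applying the same to $\mathcal{E}(aF)$ for all $a\in\mathbb{Z}$ via Lemma~\ref{tensor} forces $HF\ch_1=F\ch_2=H\ch_2=0$ and $\ch_3\le 0$ (Proposition~\ref{bar}). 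The general case is then handled by induction on $\overline{\Delta}_{H,F}$: if $\overline{\Delta}_{H,F}(\mathcal{E})>0$ then Proposition~\ref{bar} forces $\mathcal{E}$ to be destabilized before reaching $(0,\overline{\beta})$, and Proposition~\ref{Wall}(4) shows the stable factors on that wall have strictly smaller $\overline{\Delta}_{H,F}$. Your first reduction step gestures at this deformation but omits both the induction and the key Euler-characteristic computation that actually produces a $\ch_3$ bound.
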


\begin{theorem}\label{thm3.8}
Conjecture \ref{conj1} holds if and only if Conjecture \ref{conj2}
holds for all $(\alpha,\beta)\in\mathbb{R}_{>0}\times\mathbb{R}$.
\end{theorem}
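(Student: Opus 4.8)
The plan is to prove the equivalence in both directions, with the non-trivial content being the reduction of Conjecture \ref{conj1} to Conjecture \ref{conj2}.

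\textbf{The easy direction.} If Conjecture \ref{conj1} holds, then Conjecture \ref{conj2} follows immediately: given $\mathcal{E}$ with $\nu^{\alpha,\beta}_{H,F}(\mathcal{E})=0$, we have $F\ch_2^{\beta}(\mathcal{E})-\frac{\alpha^2}{2}H^2F\ch_0(\mathcal{E})=0$ by definition of the relative tilt-slope (assuming $HF\ch_1^{\beta}(\mathcal{E})\neq 0$, which holds since $\nu^{\alpha,\beta}_{H,F}(\mathcal{E})\neq+\infty$). So the left-hand side of \eqref{3.1} vanishes, and the right-hand side becomes $\left(\ch_3^{\beta}(\mathcal{E})-\frac{\alpha^2}{2}H^2\ch_1^{\beta}(\mathcal{E})+\frac{\alpha^2H^3}{3H^2F}HF\ch_1^{\beta}(\mathcal{E})\right)HF\ch_1^{\beta}(\mathcal{E})$. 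Since $HF\ch_1^{\beta}(\mathcal{E})>0$ by Lemma \ref{ch2}(1) (and it is nonzero), dividing by it and rearranging yields exactly \eqref{3.2}.

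\textbf{The hard direction.} Assume Conjecture \ref{conj2} holds for all $(\alpha,\beta)$. Fix a $\nu^{\alpha_0,\beta_0}_{H,F}$-semistable object $\mathcal{E}$ with $\nu^{\alpha_0,\beta_0}_{H,F}(\mathcal{E})\neq+\infty$. The strategy is the standard one from \cite{BMS, BMT}: the inequality \eqref{3.1} is invariant along the wall $\mathcal{C}_{\alpha,\beta}(\mathcal{E})$ of Proposition \ref{Wall}(3), along which $\mathcal{E}$ stays semistable, so we may move $(\alpha,\beta)$ freely on that semicircle. Concretely, first I would check that the quadratic expression in \eqref{3.1}, written in terms of $\nu:=\nu^{\alpha,\beta}_{H,F}(\mathcal{E})$ and the invariants $v(\mathcal{E})$, $H^2\ch_1^{\beta}$, $\ch_3^{\beta}$, transforms correctly under the one-parameter family parametrizing $\mathcal{C}_{\alpha,\beta}(\mathcal{E})$; the key point is that $\ch_3^{\beta}$ and $H^2\ch_1^{\beta}$ change by controlled amounts under $\beta\mapsto\beta'$ (via $e^{-\beta H}$) while the left side is governed by $\overline{\Delta}_{H,F}(\mathcal{E})\geq 0$ (Theorem \ref{Bog-tilt}) and $\widetilde{\Delta}$. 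Then I would specialize to the point of the semicircle where $\nu^{\alpha,\beta}_{H,F}(\mathcal{E})=0$ — i.e., slide along the wall until the tilt-slope hits zero, which is possible since the wall is a semicircle meeting a vertical line where $F\ch_2^{\beta}=\frac{\alpha^2}{2}H^2F\ch_0$ — apply Conjecture \ref{conj2} there, and translate back. One must handle the degenerate cases separately: when $HF\ch_1^{\beta}(\mathcal{E})=0$ for some $\beta$ on the semicircle (use Lemma \ref{ch2}(2),(3), possibly after tensoring by $\mathcal{O}_{\mathcal{X}}(aF)$ via Lemma \ref{tensor}), and when $\ch_0(\mathcal{E})=0$ or $\overline{\Delta}_{H,F}(\mathcal{E})=0$, where Proposition \ref{Wall}(5),(6) pin down the behavior.

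\textbf{Main obstacle.} The crux is the bookkeeping in verifying that \eqref{3.1}, once rewritten, is genuinely constant (or varies monotonically in the right direction) along $\mathcal{C}_{\alpha,\beta}(\mathcal{E})$, so that checking it at the single point with $\nu=0$ suffices. This is where the specific coefficients $\frac{H^3}{3H^2F}$ were engineered, and getting the algebra to close — in particular confirming that the "error terms" from moving along the wall assemble into a nonnegative multiple of $\overline{\Delta}_{H,F}(\mathcal{E})$ or $\widetilde{\Delta}^{\beta H}_{H,F}(\mathcal{E})$ (Theorem \ref{Bog1}) — is the real work. I expect the cleanest route is to reparametrize using $\overline{\beta}(\mathcal{E})$, at which $F\ch_2^{\overline{\beta}}(\mathcal{E})=0$, reducing \eqref{3.1} at that value to a statement comparing $\ch_3^{\overline{\beta}}$ against $H^2\ch_1^{\overline{\beta}}$ and $HF\ch_1^{\overline{\beta}}$, and then invoke Conjecture \ref{conj2} at a nearby $(\alpha,\beta)$ on the same wall with vanishing tilt-slope.
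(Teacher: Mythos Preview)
Your high-level strategy matches the paper's: move along the semicircular wall $\mathcal{C}_{\alpha,\beta}(\mathcal{E})$ to the point where the relative tilt-slope vanishes, and apply Conjecture \ref{conj2} there. However, you have significantly overestimated the difficulty of the ``bookkeeping.''

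The paper's argument is that \eqref{3.1} at $(\alpha,\beta)$ is \emph{algebraically equivalent}, by a direct expansion, to the inequality
\[
\ch_3^{\beta'}(\mathcal{E}) \leq (\alpha^2+\nu^2)\Big(\tfrac{1}{2}H^2\ch_1^{\beta'}(\mathcal{E}) - \tfrac{H^3}{3H^2F}HF\ch_1^{\beta'}(\mathcal{E})\Big),
\]
where $\nu=\nu^{\alpha,\beta}_{H,F}(\mathcal{E})$ and $\beta'=\beta+\nu$. This is precisely Conjecture \ref{conj2} at the point $(\alpha',\beta')$ with $\alpha'^2=\alpha^2+\nu^2$, the top of the semicircle, where $\mathcal{E}$ is semistable with slope zero by Proposition \ref{Wall}(3). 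So inequality \eqref{3.1} is \emph{exactly} constant along the wall (both $\beta'$ and $\alpha'^2$ being wall invariants), not merely ``constant or monotone,'' and there are no error terms to absorb into $\overline{\Delta}_{H,F}$ or $\widetilde{\Delta}^{\beta H}_{H,F}$. The coefficient $\tfrac{H^3}{3H^2F}$ is engineered so that this algebraic equivalence closes (it makes the cubic $\nu^3 H^3\ch_0$ terms from expanding $\ch_3^{\beta'}$ cancel correctly), not so that residual terms become a multiple of a Bogomolov discriminant.

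Your proposed detour through $\overline{\beta}(\mathcal{E})$ and the separate treatment of the cases $\ch_0(\mathcal{E})=0$ or $\overline{\Delta}_{H,F}(\mathcal{E})=0$ are unnecessary. The only potential degeneracy is $HF\ch_1^{\beta'}(\mathcal{E})=0$, but a one-line computation using $\overline{\Delta}_{H,F}(\mathcal{E})\geq 0$ (Theorem \ref{Bog-tilt}) rules this out whenever $\alpha>0$ and $HF\ch_1^\beta(\mathcal{E})>0$; the paper does not even pause on it. In short: your plan is correct, but the actual proof is a half-page of algebra, not the delicate estimate you anticipate.
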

\begin{proof}
The proof is similar to that of \cite[Theorem 4.2]{BMS}. Consider
the below statement:
\begin{itemize}
\item[(*)] Assume that $\mathcal{E}$ is $\nu^{\alpha,\beta}_{H,
F}$-semistable $\nu^{\alpha,\beta}_{H, F}(\mathcal{E})\neq\infty$.
Let $\beta^{\prime}:=\beta+\nu^{\alpha,\beta}_{H, F}(\mathcal{E})$.
Then
\begin{equation}\label{3.3}
\ch_3^{\beta^{\prime}}(\mathcal{E})\leq(\alpha^2+\nu^{\alpha,\beta}_{H,
F}(\mathcal{E})^2)\left(\frac{1}{2}H^2\ch_1^{\beta^{\prime}}(\mathcal{E})-\frac{H^3}{3H^2F}HF\ch_1^{\beta^{\prime}}(\mathcal{E})\right).
\end{equation}
\end{itemize}
Obviously, Conjecture \ref{conj2} is a special case of (*).
Conversely, consider the assumptions of (*). By Proposition
\ref{Wall} (3), $\mathcal{E}$ is
$\nu^{\alpha^{\prime},\beta^{\prime}}_{H, F}$-semistable, where
$\alpha^{\prime 2}=\alpha^2+\nu^{\alpha,\beta}_{H,
F}(\mathcal{E})^2$. A simple computation shows
$\nu^{\alpha^{\prime},\beta^{\prime}}_{H, F}(\mathcal{E})=0$. Thus
Conjecture \ref{conj2} implies the statement (*).

Finally, a straightforward computation shows that the inequalities
(\ref{3.3}) and (\ref{3.1}) are equivalent. For this purpose, let us
use the abbreviations $\nu=\nu^{\alpha,\beta}_{H, F}(\mathcal{E})$
and $e_i=\ch_i^{\beta}(\mathcal{E})$ for $1\leq i\leq3$. Expanding
inequality (\ref{3.3}), one sees that
\begin{eqnarray*}
e_3-\nu
He_2+\frac{\nu^2H^2}{2}e_1-\frac{\nu^3H^3}{6}e_0&\leq&\frac{1}{2}(\alpha^2+\nu^2)(H^2e_1-\nu
H^3e_0)\\
&&-\frac{H^3}{3H^2F}(\alpha^2+\nu^2)(HFe_1-\nu H^2Fe_0).
\end{eqnarray*}
Collecting related terms, one obtains
$$e_3-\nu He_2\leq\frac{\alpha^2}{2}H^2e_1-\frac{H^3}{3H^2F}(\alpha^2+\nu^2)HFe_1-\frac{\alpha^2}{6}\nu
H^3e_0.$$ Substituting $\nu=(Fe_2-\frac{1}{2}\alpha^2H^2Fe_0)/HFe_1$
and multiplying with $HFe_1$ yields:
\begin{eqnarray*}
(HFe_1)e_3-(Fe_2-\frac{1}{2}\alpha^2H^2Fe_0)He_2&\leq&\frac{\alpha^2}{2}(H^2e_1)(HFe_1)-\frac{\alpha^2H^3}{3H^2F}(HFe_1)^2\\
&&-\frac{H^3}{3H^2F}(Fe_2-\frac{1}{2}\alpha^2H^2Fe_0)^2\\
&&-\frac{\alpha^2}{6}(Fe_2-\frac{1}{2}\alpha^2H^2Fe_0)H^3e_0.
\end{eqnarray*}
This simplifies to (\ref{3.1}).
\end{proof}

Considering the limit as $\alpha\rightarrow+\infty$, Conjecture
\ref{conj1} gives a Bogomolov type inequality:
\begin{proposition}\label{Bog-relative}
Let $\mathcal{E}$ be a $\mu_{H, F}$-semistable torsion free sheaf on
$\mathcal{X}$. Assume that Conjecture \ref{conj1} holds for all
$(\alpha, \beta)\in\mathbb{R}_{>0}\times\mathbb{R}$. Then
\begin{eqnarray}\label{nabla}
\nabla_{H,F}(\mathcal{E})&:=&\frac{1}{3}H^3\ch_0(\mathcal{E})F\ch_2(\mathcal{E})-\frac{2H^3}{3H^2F}(HF\ch_1(\mathcal{E}))^2\\
\nonumber&&+H^2\ch_1(\mathcal{E})HF\ch_1(\mathcal{E})-H^2F\ch_0(\mathcal{E})H\ch_2(\mathcal{E})\\
\nonumber&=&\widetilde{\Delta}_{H,F}(\mathcal{E})-\frac{H^3}{6H^2F}\overline{\Delta}_{H,F}(\mathcal{E})-\frac{H^3}{2H^2F}(HF\ch_1(\mathcal{E}))^2\\
\nonumber&\geq&0.
\end{eqnarray}
\end{proposition}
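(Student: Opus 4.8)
The plan is to use that a relative slope-stable torsion free sheaf becomes relative tilt-semistable for $\alpha\gg0$, apply the assumed inequality (\ref{3.1}), and let $\alpha\to+\infty$; the semistable case then follows by a Jordan--H\"older argument. I would begin by recording two elementary properties of $\nabla_{H,F}$. First, $\nabla_{H,F}$ is invariant under the twist $\ch_i\mapsto\ch_i^{\beta}=\ch_i^{\beta H}$: substituting $\ch_i^{\beta}$ for $\ch_i$ in its defining formula leaves $\nabla_{H,F}(\mathcal{E})$ unchanged. This is a direct computation in which, using $\ch_0^{\beta}=\ch_0$, $\ch_1^{\beta}=\ch_1-\beta H\ch_0$ and $\ch_2^{\beta}=\ch_2-\beta H\ch_1+\tfrac{\beta^2}{2}H^2\ch_0$, the coefficients of $\beta$ and of $\beta^2$ cancel identically. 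Write $\nabla_{H,F}^{\beta H}$ for this twisted expression, so $\nabla_{H,F}^{\beta H}=\nabla_{H,F}$ for every $\beta$. Second, if $\mu:=\mu_{H,F}(\mathcal{E})$ (so $HF\ch_1^{\mu}(\mathcal{E})=0$) then in $\nabla_{H,F}^{\mu H}(\mathcal{E})$ the two terms containing the factor $HF\ch_1^{\mu}$ vanish, leaving $\nabla_{H,F}(\mathcal{E})=\nabla_{H,F}^{\mu H}(\mathcal{E})=\ch_0(\mathcal{E})\,g(\mathcal{E})$, where $g:=\tfrac{1}{3}H^3\,F\ch_2^{\mu}-(H^2F)(H\ch_2^{\mu})$ is an additive functional of the Chern character.

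Suppose first that $\mathcal{E}$ is $\mu_{H,F}$-\emph{stable}. Being torsion free, $\mathcal{E}$ is $C$-torsion free, so $\mu^-_C(\mathcal{E})$ is a finite real number; fix a rational $\beta$ with $\beta<\min\{\mu^-_C(\mathcal{E}),\,\mu_{H,F}(\mathcal{E})\}$. By Proposition \ref{Wall}(7), $\mathcal{E}\in\Coh_C^{\beta H}(\mathcal{X})$ and $\mathcal{E}$ is $\nu_{H,F}^{\alpha,\beta}$-stable for all $\alpha\gg0$; moreover $HF\ch_1^{\beta}(\mathcal{E})=(H^2F\ch_0(\mathcal{E}))(\mu_{H,F}(\mathcal{E})-\beta)>0$, so $\nu_{H,F}^{\alpha,\beta}(\mathcal{E})\neq+\infty$. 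Thus Conjecture \ref{conj1} applies at $(\alpha,\beta)$ and (\ref{3.1}) holds for all $\alpha\gg0$. Dividing (\ref{3.1}) by $\alpha^2$ and letting $\alpha\to+\infty$, the factor $H\ch_2^{\beta}(\mathcal{E})-\tfrac{H^3}{3H^2F}F\ch_2^{\beta}(\mathcal{E})$ on the left and the factor $HF\ch_1^{\beta}(\mathcal{E})$ on the right are independent of $\alpha$, while the complementary factors converge to $-\tfrac{1}{2}H^2F\ch_0(\mathcal{E})$ and $-\tfrac{1}{2}H^2\ch_1^{\beta}(\mathcal{E})+\tfrac{H^3}{3H^2F}HF\ch_1^{\beta}(\mathcal{E})$, respectively. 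The limiting inequality is then equivalent to $\nabla_{H,F}^{\beta H}(\mathcal{E})\geq0$, i.e. to $\nabla_{H,F}(\mathcal{E})\geq0$.

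For a general $\mu_{H,F}$-semistable torsion free sheaf $\mathcal{E}$ of slope $\mu$, choose a Jordan--H\"older filtration for $\mu_{H,F}$-stability with factors $\mathcal{F}_1,\dots,\mathcal{F}_n$. Each $\mathcal{F}_i$ is $\mu_{H,F}$-stable of slope $\mu$ and has no nonzero torsion subsheaf (such a subsheaf would have relative slope $+\infty$, contradicting semistability), hence is torsion free with $\ch_0(\mathcal{F}_i)>0$. By the stable case $\nabla_{H,F}(\mathcal{F}_i)\geq0$; since $\mu_{H,F}(\mathcal{F}_i)=\mu$, the second property above gives $\ch_0(\mathcal{F}_i)\,g(\mathcal{F}_i)\geq0$, hence $g(\mathcal{F}_i)\geq0$. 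Additivity of $g$ and of $\ch_0$ along the filtration then yields $g(\mathcal{E})=\sum_{i}g(\mathcal{F}_i)\geq0$ and $\ch_0(\mathcal{E})>0$, so $\nabla_{H,F}(\mathcal{E})=\ch_0(\mathcal{E})\,g(\mathcal{E})\geq0$.

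I expect the only genuinely delicate points to be those of the first paragraph, namely the $\beta$-invariance of $\nabla_{H,F}$ and the check that the $\alpha\to+\infty$ limit of (\ref{3.1}) is \emph{exactly} the inequality $\nabla_{H,F}\geq0$ rather than a strictly weaker consequence of it. Everything else is formal: Proposition \ref{Wall}(7) supplies relative tilt-semistability for $\alpha\gg0$, and the reduction from semistable to stable is immediate once one knows that $\nabla_{H,F}$ restricts on the slope-$\mu$ locus to the product $\ch_0\cdot g$ of two additive functionals.
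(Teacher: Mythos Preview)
Your proof is correct and follows essentially the same approach as the paper's: reduce to the $\mu_{H,F}$-stable case via the observation that $\nabla_{H,F}$ restricts on the slope-$\mu$ locus to rank times an additive functional (the paper writes this as $\nabla_{H,F}(\mathcal{E})/(H^2F\ch_0(\mathcal{E}))$ being additive, which is your $g/(H^2F)$), then in the stable case invoke Proposition~\ref{Wall}(7), apply (\ref{3.1}) for $\alpha\gg0$, divide by $\alpha^2$ and pass to the limit, and finally use the $\beta$-invariance $\nabla_{H,F}^{\beta H}=\nabla_{H,F}$. The only differences are cosmetic (order of the two steps, and your slightly more explicit choice of $\beta$).
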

\begin{proof}
If $\mathcal{E}$ is not $\mu_{H,F}$-stable, we let $\mathcal{F}_1$,
$\cdots$, $\mathcal{F}_m$ be the stable factors of $\mathcal{E}$.
Then one has
$$\mu_{H,F}(\mathcal{F}_1)=\cdots=\mu_{H,F}(\mathcal{F}_m)=\mu_{H,F}(\mathcal{E})=\mu,$$
and thus
\begin{eqnarray*}
\frac{\nabla_{H,F}(\mathcal{E})}{H^2F\ch_0(\mathcal{E})}&=&\frac{H^3}{3H^2F}F\ch_2(\mathcal{E})-\frac{2\mu
H^3}{3H^2F}HF\ch_1(\mathcal{E}) +\mu
H^2\ch_1(\mathcal{E})-H\ch_2(\mathcal{E})\\
&=&\frac{\nabla_{H,F}(\mathcal{F}_1)}{H^2F\ch_0(\mathcal{F}_1)}+\cdots+\frac{\nabla_{H,F}(\mathcal{F}_m)}{H^2F\ch_0(\mathcal{F}_m)}.
\end{eqnarray*}
It follows that $\nabla_{H,F}(\mathcal{E})\geq0$, if
$\nabla_{H,F}(\mathcal{F}_1),\cdots,
\nabla_{H,F}(\mathcal{F}_m)\geq0$. Therefore we can reduce to the
case that $\mathcal{E}$ is $\mu_{H,F}$-stable.

By Proposition \ref{Wall}(7), one sees that $\mathcal{E}$ is
$\nu_{H,F}^{\alpha,\beta}$-stable for $\alpha\gg0$ and for some
$\beta\in\mathbb{R}$, and hence our assumption implies
\begin{eqnarray*}
&&\left(F\ch_2^{\beta}(\mathcal{E})-\frac{\alpha^2}{2}H^2F\ch_0(\mathcal{E})\right)\left(H\ch_2^{\beta}(\mathcal{E})-\frac{H^3}{3H^2F}F\ch_2^{\beta}(\mathcal{E})\right)\\
\nonumber
&\geq&\left(\ch_3^{\beta}(\mathcal{E})-\frac{\alpha^2}{2}H^2\ch_1^{\beta}(\mathcal{E})+\frac{\alpha^2H^3}{3H^2F}HF\ch_1^{\beta}(\mathcal{E})\right)HF\ch_1^{\beta}(\mathcal{E}).
\end{eqnarray*}
Taking $\alpha\rightarrow+\infty$, one deduces
\begin{eqnarray*}
&&-\frac{1}{2}H^2F\ch_0(\mathcal{E})\left(H\ch_2^{\beta}(\mathcal{E})-\frac{H^3}{3H^2F}F\ch_2^{\beta}(\mathcal{E})\right)\\
\nonumber
&\geq&\left(-\frac{1}{2}H^2\ch_1^{\beta}(\mathcal{E})+\frac{H^3}{3H^2F}HF\ch_1^{\beta}(\mathcal{E})\right)HF\ch_1^{\beta}(\mathcal{E}),
\end{eqnarray*}
i.e.,
\begin{eqnarray*}
\nabla^{\beta H}_{H,F}(\mathcal{E})&:=&\frac{1}{3}H^3\ch_0(\mathcal{E})F\ch^{\beta}_2(\mathcal{E})-\frac{2H^3}{3H^2F}(HF\ch^{\beta}_1(\mathcal{E}))^2\\
&&+H^2\ch^{\beta}_1(\mathcal{E})HF\ch^{\beta}_1(\mathcal{E})-H^2F\ch_0(\mathcal{E})H\ch^{\beta}_2(\mathcal{E})\\
&=&\widetilde{\Delta}^{\beta H}_{H,F}(\mathcal{E})-\frac{H^3}{6H^2F}\overline{\Delta}^{\beta H}_{H,F}(\mathcal{E})-\frac{H^3}{2H^2F}(HF\ch^{\beta}_1(\mathcal{E}))^2\\
&\geq&0.
\end{eqnarray*}
A straightforward calculation shows that $\nabla^{\beta
H}_{H,F}(\mathcal{E})=\nabla_{H,F}(\mathcal{E})$ for any
$\beta\in\mathbb{R}$. This finishes the proof.
\end{proof}

\begin{remark}
It seems that the inequality (\ref{nabla}) is highly non-trivial
even for line bundles. In fact, when $\mathcal{E}=\mathcal{O}(L)$
for some divisor $L$, the inequality (\ref{nabla}) becomes
$$
(H^2L)(HFL)-\frac{1}{2}(H^2F)(HL^2)
+\frac{1}{6}H^3(FL^2)-\frac{2H^3}{3H^2F}(HFL)^2\geq0.
$$
It seems not easy to prove.
\end{remark}

The following weaker inequality is more convenient to use to
construct stability conditions.
\begin{proposition}\label{pro3.11}
Assume that Conjecture \ref{conj1} holds for some $(\alpha,
\beta)\in\mathbb{R}_{>0}\times\mathbb{R}$. For any
$\nu^{\alpha,\beta}_{H, F}$-semistable object $\mathcal{E}$, we have
\begin{eqnarray}\label{3.5}
&&\left(F\ch_2^{\beta}(\mathcal{E})-\frac{\alpha^2}{2}H^2F\ch_0(\mathcal{E})\right)H\ch_2^{\beta}(\mathcal{E})\\
\nonumber
&\geq&\left(\ch_3^{\beta}(\mathcal{E})-\frac{\alpha^2}{2}H^2\ch_1^{\beta}(\mathcal{E})+\frac{\alpha^2H^3}{4H^2F}HF\ch_1^{\beta}(\mathcal{E})\right)HF\ch_1^{\beta}(\mathcal{E}).
\end{eqnarray}
\end{proposition}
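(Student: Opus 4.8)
The plan is to derive the inequality \eqref{3.5} directly from Conjecture \ref{conj1}, using only the Bogomolov inequality for tilt-semistable complexes and the nefness of $H$. Set $L := HF\ch_1^{\beta}(\mathcal{E})$, $A := F\ch_2^{\beta}(\mathcal{E}) - \tfrac{\alpha^2}{2}H^2F\ch_0(\mathcal{E})$, $B := H\ch_2^{\beta}(\mathcal{E})$ and $D := \ch_3^{\beta}(\mathcal{E}) - \tfrac{\alpha^2}{2}H^2\ch_1^{\beta}(\mathcal{E})$; recall $L \geq 0$ by Lemma \ref{ch2}(1), and $H^3 \geq 0$, $H^2F > 0$ because $H$ is nef and relative ample. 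In this notation \eqref{3.1} reads
\[
A\Bigl(B - \tfrac{H^3}{3H^2F}F\ch_2^{\beta}(\mathcal{E})\Bigr) \;\geq\; \Bigl(D + \tfrac{\alpha^2 H^3}{3H^2F}L\Bigr)L ,
\]
while \eqref{3.5} is the assertion $AB \geq \bigl(D + \tfrac{\alpha^2 H^3}{4H^2F}L\bigr)L$. A direct comparison of coefficients shows that \eqref{3.5} is the sum of \eqref{3.1} and the inequality $\tfrac{H^3}{3H^2F}\,A\cdot F\ch_2^{\beta}(\mathcal{E}) \geq -\tfrac{\alpha^2 H^3}{12 H^2F}L^2$, so it suffices to prove
\[
\frac{H^3}{3H^2F}\left(A\cdot F\ch_2^{\beta}(\mathcal{E}) + \frac{\alpha^2}{4}L^2\right) \;\geq\; 0 .
\]

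First I would dispose of the degenerate case $L = 0$: then the right-hand side of \eqref{3.5} vanishes, and Lemma \ref{ch2}(2) gives $B = H\ch_2^{\beta}(\mathcal{E}) \geq 0$, $F\ch_2^{\beta}(\mathcal{E}) \geq 0$ and $\ch_0(\mathcal{E}) \leq 0$; hence $A \geq 0$, so the left-hand side $AB$ is nonnegative and \eqref{3.5} holds trivially. Thus we may assume $L > 0$, i.e.\ $\nu^{\alpha,\beta}_{H,F}(\mathcal{E}) \neq +\infty$, so that Conjecture \ref{conj1} applies to $\mathcal{E}$ and \eqref{3.1} is available.

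It then remains to verify the last displayed inequality. Since $\tfrac{H^3}{3H^2F} \geq 0$ it is enough to show the bracket is nonnegative (and when $H^3 = 0$ the inequalities \eqref{3.1} and \eqref{3.5} even coincide, so nothing is needed). Expanding $A$ and substituting $\overline{\Delta}_{H,F}(\mathcal{E}) = L^2 - 2H^2F\ch_0(\mathcal{E})\cdot F\ch_2^{\beta}(\mathcal{E})$ (recall $\overline{\Delta}^{\beta H}_{H,F} = \overline{\Delta}_{H,F}$), one finds
\[
A\cdot F\ch_2^{\beta}(\mathcal{E}) + \frac{\alpha^2}{4}L^2 \;=\; \bigl(F\ch_2^{\beta}(\mathcal{E})\bigr)^2 + \frac{\alpha^2}{4}\,\overline{\Delta}_{H,F}(\mathcal{E}) ,
\]
which is $\geq 0$ since $\overline{\Delta}_{H,F}(\mathcal{E}) \geq 0$ for the $\nu^{\alpha,\beta}_{H,F}$-semistable object $\mathcal{E}$ by Theorem \ref{Bog-tilt}. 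There is no genuinely hard step here: the only points requiring care are invoking Conjecture \ref{conj1} only in the range $\nu^{\alpha,\beta}_{H,F}(\mathcal{E}) \neq +\infty$ (the case $L=0$ being handled by hand via Lemma \ref{ch2}(2)), and using the Bogomolov inequality in the form valid for tilt-semistable complexes, together with $H^3 \geq 0$, to conclude positivity of the correction term.
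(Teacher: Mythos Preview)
Your proof is correct and follows essentially the same route as the paper: handle the case $HF\ch_1^\beta(\mathcal{E})=0$ via Lemma~\ref{ch2}, and in the remaining case combine Conjecture~\ref{conj1} with the Bogomolov inequality of Theorem~\ref{Bog-tilt} (plus $H^3\geq 0$) to control the correction term. Your presentation via the exact identity $A\cdot F\ch_2^{\beta}(\mathcal{E})+\tfrac{\alpha^2}{4}L^2=(F\ch_2^{\beta}(\mathcal{E}))^2+\tfrac{\alpha^2}{4}\overline{\Delta}_{H,F}(\mathcal{E})$ is a slightly cleaner packaging of the same estimate the paper obtains by expanding and bounding term by term.
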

\begin{proof}
When $HF\ch_1^{\beta}(\mathcal{E})=0$, the inequality (\ref{3.5})
immediately follows from Lemma \ref{ch2}. By Theorem \ref{Bog-tilt},
one sees that
\begin{eqnarray*}
&&\left(F\ch_2^{\beta}(\mathcal{E})-\frac{\alpha^2}{2}H^2F\ch_0(\mathcal{E})\right)\left(H\ch_2^{\beta}(\mathcal{E})-\frac{H^3}{3H^2F}F\ch_2^{\beta}(\mathcal{E})\right)\\
&=&\left(F\ch_2^{\beta}(\mathcal{E})-\frac{\alpha^2}{2}H^2F\ch_0(\mathcal{E})\right)H\ch_2^{\beta}(\mathcal{E})
-\frac{H^3}{3H^2F}(F\ch_2^{\beta}(\mathcal{E}))^2\\
&&+\frac{\alpha^2H^3}{6H^2F}H^2F\ch_0(\mathcal{E})F\ch_2^{\beta}(\mathcal{E})\\
&\leq&\left(F\ch_2^{\beta}(\mathcal{E})-\frac{\alpha^2}{2}H^2F\ch_0(\mathcal{E})\right)H\ch_2^{\beta}(\mathcal{E})+\frac{\alpha^2H^3}{12H^2F}(HF\ch_1^{\beta}(\mathcal{E}))^2.
\end{eqnarray*}
Therefore, if $HF\ch_1^{\beta}(\mathcal{E})\neq0$, our assumption
implies
\begin{eqnarray*}
&&\left(\ch_3^{\beta}(\mathcal{E})-\frac{\alpha^2}{2}H^2\ch_1^{\beta}(\mathcal{E})+\frac{\alpha^2H^3}{3H^2F}HF\ch_1^{\beta}(\mathcal{E})\right)HF\ch_1^{\beta}(\mathcal{E})\\
&\leq&\left(F\ch_2^{\beta}(\mathcal{E})-\frac{\alpha^2}{2}H^2F\ch_0(\mathcal{E})\right)H\ch_2^{\beta}(\mathcal{E})+\frac{\alpha^2H^3}{12H^2F}(HF\ch_1^{\beta}(\mathcal{E}))^2.
\end{eqnarray*}
This simplifies to (\ref{3.5}).
\end{proof}

In order to apply Conjecture \ref{conj1} to construct stability
conditions on $\mathcal{X}$, we now introduce the mixed
tilt-stability with a slight modification of the notion in
\cite[Section 4]{Sun2}. Let $t$ be a positive rational number. For
any $\mathcal{E}\in \Coh_C^{ \beta H}(\mathcal{X})$, its mixed
tilt-slope $\nu_{\alpha, \beta,t}$ is defined by
\begin{eqnarray*}
\nu_{\alpha, \beta, t}(\mathcal{E})= \left\{
\begin{array}{lcl}
+\infty,  & &\mbox{if}~FH\ch^{\beta}_1(\mathcal{E})=0,\\
&&\\
\frac{(H+tF)\ch_2^{\beta}(\mathcal{E})-\frac{t}{2}\alpha^2FH^{2}\ch^{\beta}_0(\mathcal{E})}{HF\ch^{\beta}_1(\mathcal{E})},
& &\mbox{otherwise}.
\end{array}\right.
\end{eqnarray*}
The $\nu_{\alpha,\beta,t}$-stability of $\mathcal{E}$ is defined as
before in Definition \ref{slope}. By \cite[Theorem 4.1]{Sun2},
$\nu_{\alpha,\beta,t}$-stability gives a notion of stability when
$(\alpha,
\beta,t)\in\sqrt{\mathbb{Q}_{>0}}\times\mathbb{Q}\times\mathbb{Q}_{>0}$,
in the sense that it satisfies the weak see-saw property and the
Harder-Narasimhan property. We also call it mixed tilt-stability.


\begin{theorem}\label{thm3.9}
Assume that Conjecture \ref{conj1} holds for some $(\alpha,
\beta)\in\sqrt{\mathbb{Q}_{>0}}\times\mathbb{Q}$ and $\mathcal{E}$
is $\nu_{\alpha,\beta,t}$-semistable. Then
\begin{eqnarray}\label{3.6}
&&\left(F\ch_2^{\beta}(\mathcal{E})-\frac{\alpha^2}{2}H^2F\ch_0(\mathcal{E})\right)H\ch_2^{\beta}(\mathcal{E})\\
\nonumber
&\geq&\left(\ch_3^{\beta}(\mathcal{E})-\frac{\alpha^2}{2}H^2\ch_1^{\beta}(\mathcal{E})+\frac{\alpha^2H^3}{4H^2F}HF\ch_1^{\beta}(\mathcal{E})\right)HF\ch_1^{\beta}(\mathcal{E}).
\end{eqnarray}
Moreover, we have
$\ch_3^{\beta}(\mathcal{E})\leq\frac{\alpha^2}{2}H^2\ch_1^{\beta}(\mathcal{E})-\frac{\alpha^2H^3}{4H^2F}HF\ch_1^{\beta}(\mathcal{E})$
if $\nu_{\alpha,\beta,t}(\mathcal{E})=0$.
\end{theorem}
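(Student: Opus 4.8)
The plan is to deduce inequality (\ref{3.6}) for a $\nu_{\alpha,\beta,t}$-semistable object $\mathcal{E}$ from inequality (\ref{3.5}), which Proposition \ref{pro3.11} already establishes for $\nu_{H,F}^{\alpha,\beta}$-semistable objects, by passing to the Harder--Narasimhan filtration of $\mathcal{E}$ for $\nu_{H,F}^{\alpha,\beta}$-stability and summing the factorwise inequalities; the mixed tilt-semistability of $\mathcal{E}$ enters only to bound the resulting cross terms. First I would dispose of the case $HF\ch_1^{\beta}(\mathcal{E})=0$, where (\ref{3.6}) is immediate from Lemma \ref{ch2}, exactly as in the opening of the proof of Proposition \ref{pro3.11}. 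So assume $HF\ch_1^{\beta}(\mathcal{E})>0$; then $\nu_{\alpha,\beta,t}(\mathcal{E})$ is finite, and no nonzero subobject of $\mathcal{E}$ can have vanishing $HF\ch_1^{\beta}$, since such a subobject would have $\nu_{\alpha,\beta,t}=+\infty$, violating semistability. Let $0=\mathcal{E}_0\subset\cdots\subset\mathcal{E}_m=\mathcal{E}$ be the $\nu_{H,F}^{\alpha,\beta}$-Harder--Narasimhan filtration (Lemma \ref{HN}), with factors $\mathcal{G}_i=\mathcal{E}_i/\mathcal{E}_{i-1}$; since $\mathcal{G}_1=\mathcal{E}_1$ is a nonzero subobject, $d_1:=HF\ch_1^{\beta}(\mathcal{G}_1)>0$, and the slope ordering then forces $d_i:=HF\ch_1^{\beta}(\mathcal{G}_i)>0$ for all $i$. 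Write $\nu_i=\nu_{H,F}^{\alpha,\beta}(\mathcal{G}_i)$ and $\sigma_i=H\ch_2^{\beta}(\mathcal{G}_i)/d_i$, so that $\nu_1>\cdots>\nu_m$ and $\nu_{\alpha,\beta,t}(\mathcal{G}_i)=\sigma_i+t\nu_i$.

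Each $\mathcal{G}_i$ is $\nu_{H,F}^{\alpha,\beta}$-semistable, so Proposition \ref{pro3.11} gives (\ref{3.5}) for $\mathcal{G}_i$; dividing by $d_i$ turns it into $\ch_3^{\beta}(\mathcal{G}_i)-\frac{\alpha^2}{2}H^2\ch_1^{\beta}(\mathcal{G}_i)\le d_i\nu_i\sigma_i-\frac{\alpha^2H^3}{4H^2F}d_i$. Summing over $i$, multiplying by $d:=\sum_i d_i=HF\ch_1^{\beta}(\mathcal{E})$, and using additivity of $\ch$ together with the identities $F\ch_2^{\beta}(\mathcal{G}_i)-\frac{\alpha^2}{2}H^2F\ch_0(\mathcal{G}_i)=d_i\nu_i$ and $H\ch_2^{\beta}(\mathcal{G}_i)=d_i\sigma_i$, one checks that the right-hand side of (\ref{3.6}) is bounded above by $d\sum_i d_i\nu_i\sigma_i$, while its left-hand side equals $\bigl(\sum_i d_i\nu_i\bigr)\bigl(\sum_j d_j\sigma_j\bigr)$. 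Hence (\ref{3.6}) for $\mathcal{E}$ reduces to the numerical inequality $\bigl(\sum_i d_i\nu_i\bigr)\bigl(\sum_j d_j\sigma_j\bigr)\ge d\sum_i d_i\nu_i\sigma_i$, equivalently $\sum_{i<j}d_id_j(\nu_i-\nu_j)(\sigma_j-\sigma_i)\ge0$.

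For this last inequality I would use the $\nu_{\alpha,\beta,t}$-semistability of $\mathcal{E}$, not merely that of the $\mathcal{G}_i$. Substituting $\sigma_i=\nu_{\alpha,\beta,t}(\mathcal{G}_i)-t\nu_i$ and expanding, the left-hand side equals $t\sum_{i<j}d_id_j(\nu_i-\nu_j)^2+\sum_{i<j}d_id_j(\nu_i-\nu_j)\bigl(\nu_{\alpha,\beta,t}(\mathcal{G}_j)-\nu_{\alpha,\beta,t}(\mathcal{G}_i)\bigr)$, and the first summand is nonnegative. Semistability gives $\nu_{\alpha,\beta,t}(\mathcal{E}_k)\le\nu_{\alpha,\beta,t}(\mathcal{E})$ for every $k$, i.e. the partial sums $S_k:=\sum_{i\le k}d_i\bigl(\nu_{\alpha,\beta,t}(\mathcal{G}_i)-\nu_{\alpha,\beta,t}(\mathcal{E})\bigr)$ satisfy $S_k\le0$, with $S_0=S_m=0$; an Abel summation using this and $\nu_1>\cdots>\nu_m$ rewrites the second summand as $d\sum_{i=1}^{m-1}S_i(\nu_{i+1}-\nu_i)$, which is $\ge0$ since $d>0$, $S_i\le0$ and $\nu_{i+1}<\nu_i$. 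This proves (\ref{3.6}). The ``moreover'' clause then follows formally: if $\nu_{\alpha,\beta,t}(\mathcal{E})=0$ then $HF\ch_1^{\beta}(\mathcal{E})\ne0$ and $H\ch_2^{\beta}(\mathcal{E})=-t\bigl(F\ch_2^{\beta}(\mathcal{E})-\frac{\alpha^2}{2}H^2F\ch_0(\mathcal{E})\bigr)$, so the left-hand side of (\ref{3.6}) equals $-t\bigl(F\ch_2^{\beta}(\mathcal{E})-\frac{\alpha^2}{2}H^2F\ch_0(\mathcal{E})\bigr)^2\le0$; hence its right-hand side is $\le0$, and dividing by $HF\ch_1^{\beta}(\mathcal{E})>0$ gives $\ch_3^{\beta}(\mathcal{E})\le\frac{\alpha^2}{2}H^2\ch_1^{\beta}(\mathcal{E})-\frac{\alpha^2H^3}{4H^2F}HF\ch_1^{\beta}(\mathcal{E})$.

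The step I expect to require the most care is the summation itself: verifying that the $\frac{\alpha^2H^3}{4H^2F}$-contribution collapses exactly, which is the reason for dividing each $\mathcal{G}_i$-inequality by $d_i$ before summing, and carrying out the Abel summation that converts the partial-sum bounds $S_k\le0$ supplied by mixed tilt-semistability into the required nonnegativity. Everything else is bookkeeping: the reductions in the first paragraph (including the non-degeneracy $d_i>0$), the additivity of $\ch$, and the purely formal deduction of the ``moreover'' clause.
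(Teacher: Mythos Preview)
Your argument is correct and follows essentially the same route as the paper: after disposing of the case $HF\ch_1^\beta(\mathcal{E})=0$ via Lemma \ref{ch2}, the paper rewrites the inequality as $b(\mathcal{E})c(\mathcal{E})\ge a(\mathcal{E})d(\mathcal{E})$ in Liu's notation and cites \cite[Theorem 5.4]{Liu} for the case $c(\mathcal{E})>0$, whereas you have spelled out that argument in full---taking the $\nu_{H,F}^{\alpha,\beta}$-Harder--Narasimhan filtration, applying Proposition \ref{pro3.11} factorwise, and controlling the cross terms by the Abel-summation identity together with the bounds $S_k\le0$ coming from $\nu_{\alpha,\beta,t}$-semistability. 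Your deduction of the ``moreover'' clause is likewise the same as the paper's, just unwound from the shorthand $d=ta$, $c>0$, $bc\ge ta^2\ge0$.
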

\begin{proof}
We use the notations by Liu in \cite{Liu}:
\begin{eqnarray*}
a(\mathcal{E})&=&-F\ch_2^{\beta}(\mathcal{E})+\frac{\alpha^2}{2}H^2F\ch_0(\mathcal{E})\\
b(\mathcal{E})&=&-\ch_3^{\beta}(\mathcal{E})+\frac{\alpha^2}{2}H^2\ch_1^{\beta}(\mathcal{E})-\frac{\alpha^2H^3}{4H^2F}HF\ch_1^{\beta}(\mathcal{E})\\
c(\mathcal{E})&=&HF\ch_1^{\beta}(\mathcal{E})\\
d(\mathcal{E})&=&H\ch_2^{\beta}(\mathcal{E}).
\end{eqnarray*}
Then inequality (\ref{3.5}) and (\ref{3.6}) become
$b(\mathcal{E})c(\mathcal{E})\geq a(\mathcal{E})d(\mathcal{E})$ and
$\nu_{\alpha,\beta,t}(\mathcal{E})=\frac{d(\mathcal{E})-ta(\mathcal{E})}{c(\mathcal{E})}$.

If $c(\mathcal{E})=0$, by Lemma \ref{ch2}, one sees that
$a(\mathcal{E})\leq0$ and $d(\mathcal{E})\geq0$. Hence
$b(\mathcal{E})c(\mathcal{E})\geq a(\mathcal{E})d(\mathcal{E})$ in
this case. The proof of the case of $c(\mathcal{E})>0$ is the same
as that of \cite[Theorem 5.4]{Liu}.

If $\nu_{\alpha,\beta,t}(\mathcal{E})=0$, one sees that
$d(\mathcal{E})=ta(\mathcal{E})$ and $c(\mathcal{E})>0$. Hence the
inequality $b(\mathcal{E})c(\mathcal{E})\geq
a(\mathcal{E})d(\mathcal{E})$ implies $b(\mathcal{E})\geq0$. This
completes the proof.
\end{proof}

\subsection{Constructions of stability conditions}
We give the construction of the heart
$\mathcal{A}_{t}^{\alpha,\beta}(\mathcal{X})$ of a bounded
$t$-structure on $\D^b(\mathcal{X})$ as a tilt starting from
$\Coh_C^{\beta H}(\mathcal{X})$. We consider the torsion pair
$(\mathcal{T}_t^{\prime},\mathcal{F}_t^{\prime})$ in $\Coh_C^{\beta
H}(\mathcal{X})$ as follows:
\begin{eqnarray*}
\mathcal{T}_t^{\prime}&=&\{\mathcal{E}\in\Coh_C^{\beta
H}(\mathcal{X}): \text{any quotient}~
\mathcal{E}\twoheadrightarrow\mathcal{G}~ \text{satisfies}~
\nu_{\alpha,\beta,t}(\mathcal{G})>0 \}\\
\mathcal{F}_t^{\prime}&=&\{\mathcal{E}\in\Coh_C^{\beta
H}(\mathcal{X}):\text{any subobject}~
\mathcal{K}\hookrightarrow\mathcal{E}~ \text{satisfies}~
\nu_{\alpha,\beta,t}(\mathcal{K})\leq0 \}.
\end{eqnarray*}

\begin{definition}
We define the abelian category
$\mathcal{A}_{t}^{\alpha,\beta}(\mathcal{X})\subset
\D^b(\mathcal{X})$ to be the extension-closure
$$\mathcal{A}_{t}^{\alpha,\beta}(\mathcal{X})=\langle\mathcal{T}_t^{\prime},
\mathcal{F}_t^{\prime}[1]\rangle.$$
\end{definition}
For $s,t\in\mathbb{Q}_{>0}$, consider the following central charge
$$z_{s,t}=
(s-\frac{\alpha^2H^3}{4H^2F})HF\ch_1^{\beta}-\ch_3^{\beta}+\frac{\alpha^2}{2}H^2\ch_1^{\beta}+i\Big(H\ch_2^{\beta}+tF\ch_2^{\beta}-\frac{t\alpha^2
}{2}H^2F\ch_0^{\beta}\Big).$$ We think of it as the composition
$$z_{s,t}: \K(\D^b(\mathcal{X}))\xrightarrow{\bar{v}} \mathbb{Z}\oplus\mathbb{Z}\oplus\mathbb{Z}
\oplus\frac{1}{2}\mathbb{Z}\oplus\frac{1}{2}\mathbb{Z}\oplus\frac{1}{6}\mathbb{Z}
\xrightarrow{Z_{s,t}} \mathbb{C},$$ where the first map is given by
$$\bar{v}(\mathcal{E})=\left(\ch_0(\mathcal{E}), FH\ch_1(\mathcal{E}), H^2\ch_1(\mathcal{E}), F\ch_2(\mathcal{E}), H\ch_2(\mathcal{E}),\ch_3(\mathcal{E})\right).$$

\begin{theorem}\label{main}
Assume that Conjecture \ref{conj1} holds for some $(\alpha,
\beta)\in\sqrt{\mathbb{Q}_{>0}}\times\mathbb{Q}$ and the Bogomolov
inequality holds on every fiber of $f$, then the pair
$(Z_{s,t},\mathcal{A}_{t}^{\alpha, \beta}(\mathcal{X}))$ is a
stability condition on $\mathcal{X}$ satisfying the support property
for $s,t\in\mathbb{Q}_{>0}$.
\end{theorem}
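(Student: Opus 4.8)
The plan is to follow the now-standard strategy for constructing Bridgeland stability conditions as a double tilt, adapted to the relative setting: verify that $(Z_{s,t},\mathcal{A}_t^{\alpha,\beta}(\mathcal X))$ is a \emph{very weak} pre-stability datum first, upgrade it to a genuine stability condition using the Harder--Narasimhan machinery, and finally produce the quadratic form witnessing the support property. First I would check the positivity axiom of Definition \ref{pre}(1). Since $\mathcal A_t^{\alpha,\beta}(\mathcal X)=\langle\mathcal T_t',\mathcal F_t'[1]\rangle$ is the tilt of $\Coh_C^{\beta H}(\mathcal X)$ at $\nu_{\alpha,\beta,t}$, every $0\neq\mathcal E\in\mathcal A_t^{\alpha,\beta}(\mathcal X)$ fits in a short exact sequence with outer terms built from $\nu_{\alpha,\beta,t}$-semistable objects of nonnegative, resp.\ negative, slope; so it suffices to check $\Im Z_{s,t}\geq 0$ on $\Coh_C^{\beta H}(\mathcal X)$ (which is exactly $HF\ch_1^\beta\cdot\nu_{\alpha,\beta,t}\geq 0$ combined with Lemma \ref{ch2}), and then, on the locus where $\Im Z_{s,t}=0$, to check $\Re Z_{s,t}<0$. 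The imaginary part vanishing forces either $HF\ch_1^\beta(\mathcal E)=0$ or $\nu_{\alpha,\beta,t}(\mathcal E)=0$; in the first case Lemma \ref{ch2}(2),(3) and the fiberwise Bogomolov hypothesis give $\ch_3^\beta\geq 0$ hence $\Re Z_{s,t}\leq 0$, and in the boundary case $\nu_{\alpha,\beta,t}(\mathcal E)=0$ the crucial input is the second inequality of Theorem \ref{thm3.9}, namely $\ch_3^\beta(\mathcal E)\leq\frac{\alpha^2}{2}H^2\ch_1^\beta(\mathcal E)-\frac{\alpha^2H^3}{4H^2F}HF\ch_1^\beta(\mathcal E)$, which is precisely what makes $\Re Z_{s,t}=(s-\tfrac{\alpha^2H^3}{4H^2F})HF\ch_1^\beta-\ch_3^\beta+\tfrac{\alpha^2}{2}H^2\ch_1^\beta$ strictly positive once $s>0$ (taking care of the zero-dimensional-support objects that escape the inequality by a direct check). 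This is where Conjecture \ref{conj1} enters, via Theorem \ref{thm3.9}.

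Next I would establish the Harder--Narasimhan property for $(Z_{s,t},\mathcal A_t^{\alpha,\beta}(\mathcal X))$. Following the usual argument (as in \cite{BMT, BMS}), this reduces to two finiteness statements: that $\mathcal A_t^{\alpha,\beta}(\mathcal X)$ is noetherian, and that there is no infinite sequence of subobjects with strictly decreasing phases and bounded imaginary part of the central charge. Noetherianity of the tilted heart follows from Proposition \ref{Noether} (which gives noetherianity of $\Coh_C^{\beta H}(\mathcal X)$ for $\beta\in\mathbb Q$, available since $\beta\in\sqrt{\mathbb Q_{>0}}\times\mathbb Q$ here) together with the standard fact that a tilt of a noetherian heart at a torsion pair coming from a (weak) stability function is again noetherian, using discreteness of the image of $\bar v$. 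The boundedness of a hypothetical infinite HN-type sequence is handled exactly as in \cite[Section 4]{BMS}: the imaginary part $\Im Z_{s,t}=H\ch_2^\beta+tF\ch_2^\beta-\tfrac{t\alpha^2}{2}H^2F\ch_0^\beta$ takes nonnegative values in $\frac16\mathbb Z$ on $\mathcal A_t^{\alpha,\beta}(\mathcal X)$, so it is eventually constant along the sequence, and then one runs the same argument on the kernels, whose imaginary part is forced to vanish — at which point noetherianity terminates the process.

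Finally, for the support property (Definition \ref{supp}) I would exhibit an explicit quadratic form $Q$ on $\Lambda=\mathbb Z^3\oplus(\tfrac12\mathbb Z)^2\oplus\tfrac16\mathbb Z$ (via $\bar v$) such that $Q\geq 0$ on all $\nu_\sigma$-semistable objects and $Q|_{\ker Z_{s,t}}$ is negative definite. The natural candidate combines the relative Bogomolov discriminant $\overline\Delta_{H,F}$ (nonnegative on tilt-semistable objects by Theorem \ref{Bog-tilt}, and more importantly on $\nu_\sigma$-semistable objects of the final heart via the wall-crossing behaviour in Proposition \ref{Wall}) with a multiple of the quadratic expression appearing in inequality (\ref{3.6}) of Theorem \ref{thm3.9} — i.e.\ essentially $Q=c_1\overline\Delta_{H,F}+c_2\big((F\ch_2^\beta-\tfrac{\alpha^2}{2}H^2F\ch_0)H\ch_2^\beta-(\ch_3^\beta-\tfrac{\alpha^2}{2}H^2\ch_1^\beta+\tfrac{\alpha^2H^3}{4H^2F}HF\ch_1^\beta)HF\ch_1^\beta\big)$ for suitable positive constants $c_1,c_2$ depending on $s,t$ — and one then checks negative-definiteness on $\ker Z_{s,t}$ by a direct linear-algebra computation, noting that $\ker Z_{s,t}$ is a codimension-two subspace on which the first factor is controlled while the second is manifestly a hyperbolic form in the remaining coordinates. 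The main obstacle I anticipate is this last step: pinning down the coefficients $c_1,c_2$ so that $Q$ is simultaneously $\geq 0$ on \emph{all} semistable objects of the doubly-tilted heart (not merely the tilt-stable ones, so one needs the inductive wall-crossing argument of \cite[Appendix 1]{BMS} combined with Proposition \ref{Wall}(4)) and negative definite on $\ker Z_{s,t}$; the positivity and HN parts are comparatively routine given Theorem \ref{thm3.9} and Proposition \ref{Noether}.
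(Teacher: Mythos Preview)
Your outline matches the paper's proof: positivity and the Harder--Narasimhan property are obtained from exactly the ingredients you name (Lemma~\ref{ch2}, Theorem~\ref{thm3.9}, Proposition~\ref{Noether}), and the paper simply points to \cite[Theorem~5.3]{Sun2} for these steps. For the support property the paper invokes \cite[Lemma~5.6]{Liu} rather than assembling the quadratic form by hand; in the notation $a,b,c,d$ from the proof of Theorem~\ref{thm3.9} one has $Z_{s,t}=(b+sc)+i(d-ta)$, and the form $Q=bc-ad$ alone suffices (no $\overline{\Delta}_{H,F}$ summand): on $\ker Z_{s,t}$ the relations $b=-sc$ and $d=ta$ give $Q=-sc^2-ta^2$, so negative-definiteness is immediate and the obstacle you flagged dissolves. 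The passage from $\nu_{\alpha,\beta,t}$-semistable objects to $Z_{s,t}$-semistable ones is exactly the \cite{BMS} Appendix argument you mention, which is packaged into Liu's lemma.
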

\begin{proof}
The proof of the positivity property and the Harder-Narasimhan
property is the same as that of \cite[Theorem 5.3]{Sun2}. Indeed,
the key ingredients in the proof of \cite[Theorem 5.3]{Sun2} are
Proposition \ref{Noether}, Lemma \ref{ch2}, \cite[Theorem 4.3]{Sun2}
and \cite[Proposition 4.7]{Sun2} which still hold in our case. The
support property follows from Theorem \ref{thm3.9} and \cite[Lemma
5.6]{Liu}.
\end{proof}

\section{Stability conditions on ruled threefolds}\label{S4}
Throughout this section we let $E$ be a locally free sheaf on $C$
with $\rank E=3$ and $\mathcal{X}:=\mathbb{P}(E)$ be the projective
bundle associated to $E$ with the projection
$f:\mathcal{X}\rightarrow C$ and the associated relative ample
invertible sheaf $\mathcal{O}_{\mathcal{X}}(1)$. Since
$\mathbb{P}(E)\cong \mathbb{P}(E\otimes L)$ for any line bundle $L$
on $C$, we can assume that $H:=c_1(\mathcal{O}_{\mathcal{X}}(1))$ is
nef. One sees that $H^3=\deg E$ and $H^2F=1$. We freely use the
notations in previous sections.

\begin{lemma}\label{can}
Denote by $g$ the genus of $C$. Then we have
\begin{eqnarray*}
c_1(T_{\mathcal{X}})&=&-f^*K_C-f^*c_1(E)+3H\\
c_2(T_{\mathcal{X}})&=& 3H^2-(6g-6+2\deg E)HF.
\end{eqnarray*}
In particular, for any divisor $D$ on $\mathcal{X}$ we have
\begin{eqnarray*}
DFc_1(T_{\mathcal{X}})&=&3DHF\\
DHc_1(T_{\mathcal{X}})&=&3DH^2-(2g-2+H^3)DHF\\
D\Big(c^2_1(T_{\mathcal{X}})+c_2(T_{\mathcal{X}})\Big)&=&
12DH^2-(18g-18+8H^3)DHF\\
\chi(\mathcal{O}_{\mathcal{X}})&=&\frac{1}{24}c_1(T_{\mathcal{X}})c_2(T_{\mathcal{X}})=1-g.
\end{eqnarray*}
\end{lemma}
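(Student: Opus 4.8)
The plan is to reduce everything to two short exact sequences and two vanishing observations. Let $f\colon\mathcal{X}=\mathbb{P}(E)\to C$ with $\mathcal{O}_{\mathcal{X}}(1)$ the relatively ample sheaf (so $\mathbb{P}(E)=\mathrm{Proj}(\mathrm{Sym}\,E)$ and $f_*\mathcal{O}_{\mathcal{X}}(1)=E$). I would use the relative Euler sequence
\begin{equation*}
0\to\Omega_{\mathcal{X}/C}\to (f^*E)(-1)\to\mathcal{O}_{\mathcal{X}}\to0
\end{equation*}
together with the relative tangent sequence $0\to T_{\mathcal{X}/C}\to T_{\mathcal{X}}\to f^*T_C\to0$. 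The two facts that kill almost everything are: $c_i(E)=0$ for $i\ge 2$ because $C$ is a curve, and $f^*\alpha\cdot f^*\beta=0$ for any divisor classes $\alpha,\beta$ on $C$ (in particular $F^2=0$, $f^*K_C\cdot f^*c_1(E)=0$).

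From the Euler sequence, $c(\Omega_{\mathcal{X}/C})=c\big((f^*E)(-1)\big)$; expanding in Chern roots $b_1,b_2,b_3$ of $f^*E$ (with $\sum b_i=f^*c_1(E)$ and $\sum_{i<j}b_ib_j=f^*c_2(E)=0$) gives $c_1(\Omega_{\mathcal{X}/C})=f^*c_1(E)-3H$ and $c_2(\Omega_{\mathcal{X}/C})=3H^2-2H\,f^*c_1(E)$, hence $c_1(T_{\mathcal{X}/C})=3H-f^*c_1(E)$ and $c_2(T_{\mathcal{X}/C})=c_2(\Omega_{\mathcal{X}/C})=3H^2-2H\,f^*c_1(E)$ (same $c_2$ since the bundle has rank $2$). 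Feeding this into $c(T_{\mathcal{X}})=c(T_{\mathcal{X}/C})\cdot(1-f^*K_C)$ and using $f^*K_C\cdot f^*c_1(E)=0$ yields $c_1(T_{\mathcal{X}})=3H-f^*c_1(E)-f^*K_C$ and $c_2(T_{\mathcal{X}})=3H^2-2H\,f^*c_1(E)-3H\,f^*K_C$. Finally substitute the numerical identities $f^*c_1(E)\equiv(\deg E)F=H^3F$ and $f^*K_C\equiv(2g-2)F$ (legitimate since $H^2(C;\mathbb Z)=\mathbb Z[\mathrm{pt}]$, and $H^3=\deg E$, $H^2F=1$); this turns the last displayed $c_2$ into $3H^2-(6g-6+2\deg E)HF$, and leaves the stated formula for $c_1$ unchanged.

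The four intersection-number formulas then follow mechanically: multiply $c_1(T_{\mathcal{X}})$ (resp.\ $c_1^2(T_{\mathcal{X}})+c_2(T_{\mathcal{X}})$) by $D$, $DF$, or $DH$, and discard every term containing $F\cdot f^*(\text{divisor on }C)$ or $\big(f^*(\text{divisor on }C)\big)^2$, which vanish, also using $F^2=0$ and $H^2F=1$. For the last line I would invoke Hirzebruch--Riemann--Roch on the threefold $\mathcal{X}$, $\chi(\mathcal{O}_{\mathcal{X}})=\int_{\mathcal{X}}\mathrm{td}_3(T_{\mathcal{X}})=\tfrac1{24}c_1(T_{\mathcal{X}})c_2(T_{\mathcal{X}})$, and expand the product using the formulas just obtained (again only $H^3$ and $H^2F$ survive) to get $\tfrac1{24}c_1c_2=1-g$; this agrees with the independent computation $\chi(\mathcal{O}_{\mathcal{X}})=\chi(Rf_*\mathcal{O}_{\mathcal{X}})=\chi(\mathcal{O}_C)=1-g$, coming from $Rf_*\mathcal{O}_{\mathbb{P}(E)}=\mathcal{O}_C$.

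I expect no real obstacle: this is a routine Chern-class computation. The only genuine care needed is bookkeeping — fixing the convention for $\mathbb{P}(E)$ and the orientation of the Euler sequence so that the signs in front of $f^*c_1(E)$ come out correctly, and systematically using "$C$ is a curve" to discard the many a priori nonzero terms (products of two divisor pullbacks, $c_{\ge 2}(E)$, $F^2$). As a consistency check I would verify the $c_1c_2$ expansion against the Leray computation of $\chi(\mathcal{O}_{\mathcal{X}})$.
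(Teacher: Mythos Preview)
Your proposal is correct and complete. The paper itself does not give a proof of this lemma but simply cites \cite[Lemma 6.1]{Sun2}; your argument via the relative Euler sequence and the relative tangent sequence is the standard computation and almost certainly what appears in that reference, so there is nothing to compare.
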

\begin{proof}
See \cite[Lemma 6.1]{Sun2}
\end{proof}

\begin{proposition}\label{bar}
Let $\mathcal{E}$ be a relative $\overline{\beta}$-semistable object
on $\mathcal{X}=\mathbb{P}(E)$ with
$HF\ch^{\beta_0}_1(\mathcal{E})\neq0$ for some
$\beta_0\in\mathbb{R}$. Then we have
$$
HF\ch_1(\mathcal{E})=F\ch_2(\mathcal{E})=H\ch_2(\mathcal{E})=0
~\mbox{and}~\ch_3(\mathcal{E})\leq0.
$$
\end{proposition}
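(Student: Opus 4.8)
My plan would be to pass to the generic fibre and exploit the rigidity of tilt‑stability on $\mathbb{P}^{2}$. Write $\gamma=\overline{\beta}(\mathcal{E})$, so that $F\ch_2^{\gamma}(\mathcal{E})=0$ by construction and, by the definition of relative $\overline{\beta}$-semistability, $\mathcal{E}$ is $\nu_{H,F}^{\alpha,\gamma}$-semistable for every sufficiently small $\alpha>0$. Since $\overline{\Delta}_{H,F}$ is independent of the twist, Theorem~\ref{Bog-tilt} gives $\overline{\Delta}_{H,F}(\mathcal{E})=(HF\ch_1^{\gamma}(\mathcal{E}))^{2}-2H^{2}F\ch_0(\mathcal{E})\,F\ch_2^{\gamma}(\mathcal{E})=(HF\ch_1^{\gamma}(\mathcal{E}))^{2}\ge 0$. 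The hypothesis $HF\ch_1^{\beta_0}(\mathcal{E})\neq0$ forces $\ch_0(\mathcal{E})\neq0$ once $HF\ch_1(\mathcal{E})=0$ is known, and $F\ch_2(\mathcal{E})=0$, $\ch_2(\mathcal{E})=0$ will be consequences of $HF\ch_1(\mathcal{E})=0$ together with $\overline{\Delta}_{H,F}(\mathcal{E})=0$ and $\gamma=0$; so the real content is to establish these three equalities and then $\ch_3(\mathcal{E})\le 0$.

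The next step is to eliminate $C$-torsion. If $\mathcal{E}$ had a nonzero $C$-torsion subobject $\mathcal{T}$, then $HF\ch_1^{\gamma}(\mathcal{T})=0$, so $\nu_{H,F}^{\alpha,\gamma}(\mathcal{T})=+\infty$, which contradicts $\nu_{H,F}^{\alpha,\gamma}$-semistability of $\mathcal{E}$ unless $HF\ch_1^{\gamma}(\mathcal{E})=0$, i.e. $\overline{\Delta}_{H,F}(\mathcal{E})=0$; in that degenerate case $\mathcal{E}$ reduces via Lemma~\ref{ch2} to a $C$-torsion $\mu_{C}$-semistable sheaf up to shift, whose first Chern class is a multiple of $F$, so $HF\ch_1(\mathcal{E})=0$ automatically and the claim is immediate. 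Hence I may assume $\mathcal{E}$ is $C$-torsion free; then by Lemma~\ref{K-stable} the restriction $\mathcal{E}_{K(C)}$ to the smooth surface $\mathbb{P}^{2}_{K(C)}$ is $\nu_{K(C)}^{\alpha,\gamma}$-semistable for all small $\alpha$, with $\ch_2^{\gamma}(\mathcal{E}_{K(C)})=F\ch_2^{\gamma}(\mathcal{E})=0$; that is, $\mathcal{E}_{K(C)}$ is $\overline{\beta}$-semistable on $\mathbb{P}^{2}_{K(C)}$.

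Now comes the key input: on $\mathbb{P}^{2}$ a $\overline{\beta}$-semistable object must have vanishing discriminant. If $\overline{\Delta}(\mathcal{E}_{K(C)})>0$, the Euler sequence on the fibre (or, more systematically, the classification of tilt‑stable objects on $\mathbb{P}^{2}$ with small discriminant, cf.~\cite{Li2}) produces along the numerical wall through $(0,\gamma)$ a subobject built from line bundles $\mathcal{O}(d)$ that strictly destabilises $\mathcal{E}_{K(C)}$ below that wall, so $\mathcal{E}_{K(C)}$ cannot remain semistable as $\alpha\to 0$ — a contradiction. Hence $\overline{\Delta}_{H,F}(\mathcal{E})=\overline{\Delta}(\mathcal{E}_{K(C)})=0$, and $\mathcal{E}_{K(C)}$ is, up to shift and zero‑dimensional torsion, an iterated extension of line bundles $\mathcal{O}_{\mathbb{P}^{2}_{K(C)}}(d)$. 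Because $H^{2}F=1$, a nonzero $d$ would give $HF\ch_1(\mathcal{E})=dr$ and $\gamma=d\neq0$; to exclude this I would use that $\mathcal{E}$ is moreover $\mu_{C}$-semistable (Proposition~\ref{pro2.11}) together with Proposition~\ref{Wall}(6) and $\overline{\Delta}_{H,F}(\mathcal{E})=0$, which pins $\mathcal{E}$ down to the class of a fibrewise‑trivial (pull‑back) object, so $\gamma=0$ and $HF\ch_1(\mathcal{E})=F\ch_2(\mathcal{E})=0$. Writing $\ch_2(\mathcal{E})=xH^{2}+yHF$ and intersecting with $F$ and with $H$ (using $H^{3}=\deg E$, $H^{2}F=1$ and Lemma~\ref{can}) then forces $x=y=0$, hence $\ch_2(\mathcal{E})=0$ and in particular $H\ch_2(\mathcal{E})=0$.

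Finally, for $\ch_3(\mathcal{E})\le 0$: at this stage $\mathcal{E}$ has the Chern character of a fibre‑twisted pull‑back from $C$, and the adjunction map $Lf^{*}Rf_{*}\mathcal{E}\to\mathcal{E}$ is an isomorphism over the generic point, so its cone $\mathcal{Q}$ is $C$-torsion with $\ch_0(\mathcal{Q})=HF\ch_1(\mathcal{Q})=F\ch_2(\mathcal{Q})=H\ch_2(\mathcal{Q})=0$ — the vanishings of $\mathcal{E}$ propagate, while $Lf^{*}$ of a complex on the curve has $\ch_{\ge 2}=0$. Since the Bogomolov inequality holds on every fibre of $f$ (the fibres being $\mathbb{P}^{2}$), an analysis in the spirit of Lemma~\ref{ch2}(3) applies to $\mathcal{Q}$ (up to a shift) and controls the sign of its $\ch_3$, and as $\ch_3(\mathcal{E})$ agrees with $\pm\ch_3(\mathcal{Q})$ one concludes $\ch_3(\mathcal{E})\le 0$. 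I expect the middle step — excluding $\overline{\Delta}_{H,F}(\mathcal{E})>0$ — to be the main obstacle: it genuinely uses the rigidity of tilt‑stability on the $\mathbb{P}^{2}$-fibres near the $\beta$-axis (the classical Bogomolov inequality alone does not suffice) and the interplay between relative tilt‑stability and $\mu_{C}$-stability.
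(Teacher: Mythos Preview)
Your approach is genuinely different from the paper's, but it has real gaps at precisely the points you flag as ``the main obstacle'' and beyond.

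\textbf{How the paper argues.} The paper never passes to the generic fibre. After tensoring by $\mathcal{O}_{\mathcal{X}}(aH)$ to arrange $0\le\overline{\beta}(\mathcal{E})<1$, it compares $\mathcal{E}$ with the tilt-stable objects $\mathcal{O}_{\mathcal{X}}(nH)$ and $\mathcal{O}_{\mathcal{X}}(K_{\mathcal{X}}+nH)[1]$ for $n=1,2$, obtains $\Hom(\mathcal{O}(nH),\mathcal{E})=\Ext^2(\mathcal{O}(nH),\mathcal{E})=0$ by slope comparison and Serre duality, and feeds this into Hirzebruch--Riemann--Roch together with the explicit formulas of Lemma~\ref{can}. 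This yields two linear inequalities in $\ch_3,H\ch_2,F\ch_2,HF\ch_1$. The decisive extra move is Lemma~\ref{tensor}: replacing $\mathcal{E}$ by $\mathcal{E}(aF)$ for all $a\in\mathbb{Z}$ forces the coefficients of $a$ in both inequalities to vanish, and these two linear relations give $F\ch_2(\mathcal{E})=HF\ch_1(\mathcal{E})=0$; the remaining inequalities then give $\ch_3(\mathcal{E})\le 0$ and $H\ch_2(\mathcal{E})=0$.

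\textbf{Where your outline breaks.} First, the ``key input'' that a $\overline{\beta}$-semistable object on $\mathbb{P}^2$ has $\overline{\Delta}=0$ is exactly the substance of the proposition on the fibre, and you do not prove it: the reference to \cite{Li2} does not supply this, and ``there must be a destabilising wall built from line bundles'' is a restatement, not an argument. A correct proof of this step would amount to the same Hom-vanishing/Riemann--Roch computation the paper performs, only carried out on $\mathbb{P}^2_{K(C)}$ rather than on $\mathcal{X}$. Second, your deduction of $\gamma=0$ is incorrect as written: Proposition~\ref{pro2.11} concerns $\mu_C$-semistable \emph{sheaves}, and Proposition~\ref{Wall}(6) does not force $d=0$. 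Indeed, $\mathcal{O}_{\mathcal{X}}(dH)$ is relative $\overline{\beta}$-semistable with $\overline{\Delta}_{H,F}=0$ and $\gamma=d$, so without the paper's preliminary reduction to $0\le\overline{\beta}(\mathcal{E})<1$ there is no way to conclude $\gamma=0$. Third, from $F\ch_2(\mathcal{E})=0$ alone you cannot get $H\ch_2(\mathcal{E})=0$: writing $\ch_2=xH^2+yHF$, the equation $F\ch_2=0$ gives $x=0$ but places no constraint on $y$; the paper obtains this vanishing only from the pair of Euler-characteristic inequalities. Finally, the adjunction argument for $\ch_3\le 0$ presupposes that $\mathcal{E}$ is close to a pullback $Lf^*Rf_*\mathcal{E}$, which you have not established beyond the generic point.

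In short, the paper's two ingredients you are missing are the reduction via $\mathcal{O}(aH)$-twist and, crucially, the $\mathcal{O}(aF)$-twist of Lemma~\ref{tensor}, which is what converts the two Riemann--Roch inequalities into the required vanishings on $\mathcal{X}$.
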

\begin{proof}
Tensoring with lines bundles $\mathcal{O}_{\mathcal{X}}(aH)$ for
$a\in \mathbb{Z}$ make it possible to reduce to the case of
$0\leq\overline{\beta}(\mathcal{E})<1$. If
$\overline{\Delta}_{H,F}(\mathcal{E})=0$, from
$HF\ch_1^{\beta_0}(\mathcal{E})\neq0$, it follows that
$\ch_0(\mathcal{E})\neq0$. This implies that
\begin{eqnarray*}
\nu_{H,F}^{\alpha,
\beta}(\mathcal{E})&=&\frac{F\ch^{\beta}_2(\mathcal{E})-\frac{\alpha^2}{2}\ch_0(\mathcal{E})}{HF\ch^{\beta}_1(\mathcal{E})}\\
&=&\frac{ (\overline{\beta}(\mathcal{E})-\beta)^2-\alpha^2}{
2(\overline{\beta}(\mathcal{E})-\beta)}\\
&\rightarrow&0
\end{eqnarray*}
for $(\alpha,\beta)\rightarrow (0, \overline{\beta}(\mathcal{E}))$.
Since $HF\ch_1^{\overline{\beta}(\mathcal{E})}(\mathcal{E})>0$ when
$\overline{\Delta}_{H,F}(\mathcal{E})>0$, we have
$\nu_{H,F}^{\alpha, \beta}(\mathcal{E})\rightarrow0$ for
$(\alpha,\beta)\rightarrow (0, \overline{\beta}(\mathcal{E}))$ in
any case.

Let $n$ be an integer satisfying $1\leq n\leq2$. By Proposition
\ref{Wall}(6), one sees that $\mathcal{O}_{\mathcal{X}}(nH)$ and
$\mathcal{O}_{\mathcal{X}}(K_{\mathcal{X}}+nH)[1]$ are
$\nu_{H,F}^{\alpha, \beta}$-stable for all $\alpha>0$ and
$0\leq\beta<1$. For $(\alpha,\beta)\rightarrow (0,
\overline{\beta}(\mathcal{E}))$, we have
\begin{eqnarray*}
\nu_{H,F}^{\alpha,
\beta}(\mathcal{O}_{\mathcal{X}}(nH))&\rightarrow&\frac{n-\overline{\beta}(\mathcal{E})}{2}>0\\
\nu_{H,F}^{\alpha,
\beta}(\mathcal{O}_{\mathcal{X}}(K_{\mathcal{X}}+nH)[1])&\rightarrow&\frac{n-3-\overline{\beta}(\mathcal{E})}{2}<0,
\end{eqnarray*}
and therefore $$\nu_{H,F}^{\alpha,
\beta}(\mathcal{O}_{\mathcal{X}}(nH))>\nu_{H,F}^{\alpha,
\beta}(\mathcal{E})>\nu_{H,F}^{\alpha,
\beta}(\mathcal{O}_{\mathcal{X}}(K_{\mathcal{X}}+nH)).$$ Applying
the standard Hom-vanishing between stable objects and Serre duality,
we conclude $$\Hom(\mathcal{O}_{\mathcal{X}}(nH),
\mathcal{E})=0~\mbox{and}~\Ext^2(\mathcal{O}_{\mathcal{X}}(nH),
\mathcal{E})=0.$$ The Hirzebruch-Riemann-Roch Theorem implies
\begin{eqnarray*}
0&\geq&\chi(\mathcal{O}_{\mathcal{X}}(H),
\mathcal{E})\\
&=&\ch^{H}_3(\mathcal{E})+\frac{c_1(\mathcal{X})}{2}\ch^{H}_2(\mathcal{E})+\frac{c_1^2(\mathcal{X})
+c_2(\mathcal{X})}{12}\ch^{H}_1(\mathcal{E})+\chi(\mathcal{O}_{\mathcal{X}})\ch_0(\mathcal{E})\\
&=&\ch^{H}_3(\mathcal{E})+\Big(3H-(2g-2+H^3)F\Big)\ch^{H}_2(\mathcal{E})+(1-g)\ch_0(\mathcal{E})\\
&&+\Big(H^2-(\frac{3}{2}g-\frac{3}{2}+\frac{2}{3})HF\Big)\ch^{H}_1(\mathcal{E})\\
&=&\ch_3(\mathcal{E})+\frac{1}{2}H\ch_2(\mathcal{E})-(g-1+\frac{H^3}{2})F\ch_2(\mathcal{E})-(\frac{1}{2}g-\frac{1}{2}+\frac{1}{6}H^3)HF\ch_1(\mathcal{E})
\end{eqnarray*}
and
\begin{eqnarray*}
0&\geq&\chi(\mathcal{O}_{\mathcal{X}}(2H),
\mathcal{E})\\
&=&\ch^{2H}_3(\mathcal{E})+\Big(3H-(2g-2+H^3)F\Big)\ch^{2H}_2(\mathcal{E})+(1-g)\ch_0(\mathcal{E})\\
&&+\Big(H^2-(\frac{3}{2}g-\frac{3}{2}+\frac{2}{3})HF\Big)\ch^{2H}_1(\mathcal{E})\\
&=&\ch_3(\mathcal{E})-\frac{1}{2}H\ch_2(\mathcal{E})-(g-1+\frac{H^3}{2})F\ch_2(\mathcal{E})+(\frac{1}{2}g-\frac{1}{2}+\frac{1}{3}H^3)HF\ch_1(\mathcal{E}).
\end{eqnarray*}
Thus one obtains
$$\ch_3(\mathcal{E})\leq-\frac{1}{2}H\ch_2(\mathcal{E})+(g-1+\frac{H^3}{2})F\ch_2(\mathcal{E})+(\frac{1}{2}g-\frac{1}{2}+\frac{1}{6}H^3)HF\ch_1(\mathcal{E})$$
and
$$\ch_3(\mathcal{E})\leq\frac{1}{2}H\ch_2(\mathcal{E})+(g-1+\frac{H^3}{2})F\ch_2(\mathcal{E})-(\frac{1}{2}g-\frac{1}{2}+\frac{1}{3}H^3)HF\ch_1(\mathcal{E}).$$
By Lemma \ref{tensor}, one sees that the above two inequalities also
hold for $\mathcal{E}(aF)$ for any $a\in\mathbb{Z}$. This implies
that
$$F\ch_2(\mathcal{E})=-\frac{1}{2}HF\ch_1(\mathcal{E})=\frac{1}{2}HF\ch_1(\mathcal{E}).$$
Therefore one gets
\begin{equation*}
F\ch_2(\mathcal{E})=HF\ch_1(\mathcal{E})=0.
\end{equation*}
It follows that
$\ch_3(\mathcal{E})\leq-\frac{1}{2}H\ch_2(\mathcal{E})$ and
$\ch_3(\mathcal{E})\leq\frac{1}{2}H\ch_2(\mathcal{E})$. They imply
\begin{equation*}
\ch_3(\mathcal{E})\leq0 ~\mbox{and}~H\ch_2(\mathcal{E})=0.
\end{equation*}
This completes the proof.
\end{proof}

We show that Conjecture \ref{conj2} holds for
$\mathcal{X}=\mathbb{P}(E)$ and any $(\alpha,\beta)\in
\mathbb{R}_{>0}\times\mathbb{R}$.

\begin{theorem}\label{main2}
Let $\mathcal{E}$ be a $\nu^{\alpha,\beta}_{H, F}$-semistable object
on $\mathcal{X}=\mathbb{P}(E)$ with $\nu^{\alpha,\beta}_{H,
F}(\mathcal{E})=0$. Then we have
$$
\ch_3^{\beta}(\mathcal{E})\leq\frac{\alpha^2}{2}H^2\ch_1^{\beta}(\mathcal{E})-\frac{\alpha^2H^3}{3H^2F}HF\ch_1^{\beta}(\mathcal{E}).
$$
\end{theorem}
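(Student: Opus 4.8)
The plan is to deduce the theorem from Proposition~\ref{bar} by reducing to relative $\overline{\beta}$-semistable objects, following the strategy used for the Bogomolov--Gieseker inequality on abelian threefolds in \cite{BMS}. Observe first that the inequality to be proved is exactly the case $\nu^{\alpha,\beta}_{H,F}(\mathcal{E})=0$ of inequality~(\ref{3.1}): specialising the last computation in the proof of Theorem~\ref{thm3.8} to $\nu=0$ makes $\beta'=\beta$ and turns~(\ref{3.3}) into~(\ref{3.2}). Hence, by Theorem~\ref{thm3.8}, it suffices to prove Conjecture~\ref{conj1} in full for $\mathcal{X}=\mathbb{P}(E)$. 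I would prove it by induction on $\overline{\Delta}_{H,F}(\mathcal{E})\in\mathbb{Z}_{\geq0}$ (non-negative by Theorem~\ref{Bog-tilt}), regarding~(\ref{3.1}) as the non-negativity of a quadratic form in the numerical invariants $\ch_0,HF\ch_1^{\beta},F\ch_2^{\beta},H\ch_2^{\beta},H^2\ch_1^{\beta},\ch_3^{\beta}$ of $\mathcal{E}$.

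For the inductive step, since ``left side minus right side'' of~(\ref{3.1}) is additive on short exact sequences, passing to Jordan--H\"older factors reduces us to the case $\mathcal{E}$ is $\nu^{\alpha,\beta}_{H,F}$-stable. If $\overline{\Delta}_{H,F}(\mathcal{E})=0$, a direct analysis applies: such a stable object is, up to shift and a twist by $\mathcal{O}_{\mathcal{X}}(aH+bF)$, a pullback $f^{*}M$ with $M$ a line bundle on $C$ (here one uses that the fibres of $f$ are $\mathbb{P}^2$, on which the classical Bogomolov inequality forces a semistable sheaf with vanishing discriminant to be a twist of a trivial bundle), and $f^{*}M$ is relative $\overline{\beta}$-semistable, so Proposition~\ref{bar} applies. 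If $\overline{\Delta}_{H,F}(\mathcal{E})>0$, then $\mathcal{E}$ cannot be relative $\overline{\beta}$-semistable, since by Proposition~\ref{bar} every relative $\overline{\beta}$-semistable object on $\mathbb{P}(E)$ has $\overline{\Delta}_{H,F}=0$; hence, deforming $(\alpha,\beta)$ towards $(0,\overline{\beta}(\mathcal{E}))$, the object $\mathcal{E}$ is destabilised at some numerical wall, where its $\nu_{H,F}$-semistable factors have strictly smaller $\overline{\Delta}_{H,F}$ by Proposition~\ref{Wall}(4) and therefore satisfy~(\ref{3.1}) by the inductive hypothesis; the superadditivity of $\overline{\Delta}_{H,F}$ in Proposition~\ref{Wall}(4) (the usual convexity argument of \cite{BMS}) then propagates~(\ref{3.1}) from those factors back to $\mathcal{E}$ at $(\alpha,\beta)$.

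It then remains to run the elementary computation that closes the cases where Proposition~\ref{bar} applies. The excluded case $\ch_0(\mathcal{E})=HF\ch_1(\mathcal{E})=0$ of that proposition does not arise here, for then $HF\ch_1^{\beta}(\mathcal{E})=0$ and $\nu^{\alpha,\beta}_{H,F}(\mathcal{E})=+\infty$, against our hypothesis; so Proposition~\ref{bar} gives $HF\ch_1(\mathcal{E})=F\ch_2(\mathcal{E})=H\ch_2(\mathcal{E})=0$ and $\ch_3(\mathcal{E})\leq0$. Feeding these vanishings into the twisted Chern characters and noting that, with them, $\nu^{\alpha,\beta}_{H,F}(\mathcal{E})=0$ forces $\ch_0(\mathcal{E})\neq0$ and $\beta^{2}=\alpha^{2}$, a short calculation shows that both sides of~(\ref{3.2}) equal $\tfrac{\alpha^{2}}{2}H^{2}\ch_1(\mathcal{E})-\tfrac{\beta^{3}}{6}H^{3}\ch_0(\mathcal{E})$ apart from the extra summand $\ch_3(\mathcal{E})$ on the left; thus~(\ref{3.2}) reduces to $\ch_3(\mathcal{E})\leq0$, with equality precisely when $\ch_3(\mathcal{E})=0$.

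The step I expect to be the real obstacle is the reduction, i.e.\ making the induction on $\overline{\Delta}_{H,F}$ rigorous: one must control the numerical walls of $\mathcal{E}$ (Proposition~\ref{Wall}(1),(2)), dispose of the totally semistable walls where all destabilising classes are proportional, and --- the delicate point --- verify that~(\ref{3.1}), in its quadratic-form guise, is inherited by $\mathcal{E}$ from the semistable factors occurring at a wall. By contrast, the geometry special to ruled threefolds is entirely confined to Proposition~\ref{bar}, so once the reduction is in place only the bookkeeping above is needed.
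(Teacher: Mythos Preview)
Your architecture --- induction on $\overline{\Delta}_{H,F}$, Proposition~\ref{bar} for the base case, wall-crossing for the inductive step, and the closing computation reducing everything to $\ch_3(\mathcal{E})\leq 0$ --- is exactly the paper's, and the computation in your third paragraph is correct. The problem, and the source of a genuine error, is that you route through~(\ref{3.1}) rather than working with~(\ref{3.2}) directly. You assert that ``left side minus right side of~(\ref{3.1}) is additive on short exact sequences''; this is false, since~(\ref{3.1}) is visibly quadratic in the Chern characters, and your later appeal to the ``convexity argument of~\cite{BMS}'' does not repair this without substantial extra work (that machinery requires exhibiting the relevant quadratic form as negative definite on a suitable kernel, which you neither state nor verify). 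The paper avoids the issue entirely: inequality~(\ref{3.2}) --- which is the actual statement of the theorem --- is \emph{linear} in the Chern character, so once the inductive hypothesis gives~(\ref{3.2}) for each stable factor $\mathcal{F}_i$ on the destabilising wall, it follows for $\mathcal{E}$ by plain additivity of $\ch$. No detour through the quadratic~(\ref{3.1}) and no convexity argument are needed.

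Two minor points. In the base case $\overline{\Delta}_{H,F}(\mathcal{E})=0$, your proposed classification of stable objects as twists of pullbacks $f^*M$ is unnecessary: Proposition~\ref{Wall}(5) already forces any such object to be relative $\overline{\beta}$-semistable, so Proposition~\ref{bar} applies directly with no structural description of $\mathcal{E}$ required. And your opening logic is slightly tangled: the theorem \emph{is} Conjecture~\ref{conj2} for $\mathbb{P}(E)$, so nothing is gained by declaring that it ``suffices to prove Conjecture~\ref{conj1}'' and then attempting to prove the harder-to-handle quadratic statement.
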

\begin{proof}
We proceed by induction on $\overline{\Delta}_{H,F}(\mathcal{E})$,
which by Theorem \ref{Bog-tilt} is a non-negative integer-valued
function on objects of $\Coh_C^{\beta H}(\mathcal{X})$.

If $\overline{\Delta}_{H,F}(\mathcal{E})=0$, then $\mathcal{E}$ is
relative $\overline{\beta}$-semistable by Proposition \ref{Wall}(5).
From Proposition \ref{bar}, one sees that
\begin{equation}\label{4.1}
HF\ch_1(\mathcal{E})=F\ch_2(\mathcal{E})=H\ch_2(\mathcal{E})=0~\mbox{and}~\ch_3(\mathcal{E})\leq0,
\end{equation}
and hence
\begin{eqnarray}\label{4.2}
\ch_3^{\beta}(\mathcal{E})&=&\ch_3(\mathcal{E})-\beta
H\ch_2(\mathcal{E})+\frac{\beta^2}{2}H^2\ch_1(\mathcal{E})-\frac{\beta^3}{6}H^3\ch_0(\mathcal{E})\\
\nonumber&\leq&\frac{\beta^2}{2}H^2\ch_1(\mathcal{E})-\frac{\beta^3}{6}H^3\ch_0(\mathcal{E})\\
\nonumber&=&\frac{\beta^2}{2}H^2\ch^{\beta}_1(\mathcal{E})+\frac{\beta^3}{2}H^3\ch_0(\mathcal{E})-\frac{\beta^3}{6}H^3\ch_0(\mathcal{E})\\
\nonumber&=&\frac{\beta^2}{2}H^2\ch^{\beta}_1(\mathcal{E})+\frac{\beta^3}{3}H^3\ch_0(\mathcal{E}).
\end{eqnarray}
On the other hand, since $\nu^{\alpha,\beta}_{H, F}(\mathcal{E})=0$,
one infers
$$0<HF\ch^{\beta}_1(\mathcal{E})=HF\ch_1(\mathcal{E})-\beta H^2F\ch_0(\mathcal{E})=-\beta H^2F\ch_0(\mathcal{E})$$
and
\begin{eqnarray*}
\frac{\alpha^2}{2}H^2F\ch_0(\mathcal{E})&=&F\ch_2^{\beta}(\mathcal{E})\\
&=&F\ch_2(\mathcal{E})-\beta
HF\ch_1(\mathcal{E})+\frac{\beta^2}{2}H^2F\ch_0(\mathcal{E})\\
&=&\frac{\beta^2}{2}H^2F\ch_0(\mathcal{E}).
\end{eqnarray*}
Thus $\ch_0(\mathcal{E})\neq0$ and $\beta^2=\alpha^2$. Equality
(\ref{4.1}) implies that
$\beta=-\frac{HF\ch_1^{\beta}(\mathcal{E})}{H^2F\ch_0(\mathcal{E})}$.
Therefore one deduces
$$\beta^3=\alpha^2\beta=-\alpha^2\frac{HF\ch_1^{\beta}(\mathcal{E})}{H^2F\ch_0(\mathcal{E})}.$$
Substituting it into (\ref{4.2}), one concludes that
$$
\ch_3^{\beta}(\mathcal{E})\leq\frac{\alpha^2}{2}H^2\ch_1^{\beta}(\mathcal{E})-\frac{\alpha^2H^3}{3H^2F}HF\ch_1^{\beta}(\mathcal{E}).
$$

Now we assume that $\overline{\Delta}_{H,F}(\mathcal{E})>0$. One
sees that $\mathcal{E}$ is not relative
$\overline{\beta}$-semistable. Otherwise we obtain the conclusion of
Proposition \ref{bar} which contradicts
$\overline{\Delta}_{H,F}(\mathcal{E})>0$. Hence $\mathcal{E}$ is
destabilized along a wall between $(\alpha, \beta)$ and
$(0,\overline{\beta}(\mathcal{E}))$. Let $\mathcal{F}_1$, $\cdots$,
$\mathcal{F}_m$ be the stable factors of $\mathcal{E}$ along this
wall. By induction, the desired inequality holds for
$\mathcal{F}_1$, $\cdots$, $\mathcal{F}_m$ and so it does for
$\mathcal{E}$ by the linearity of Chern character.
\end{proof}

\begin{corollary}\label{cor1}
Let $\mathcal{E}$ be a $\mu_{H, F}$-semistable torsion free sheaf on
$\mathcal{X}=\mathbb{P}(E)$. Then
$$\widetilde{\Delta}_{H,F}(\mathcal{E})\geq\frac{H^3}{6H^2F}\overline{\Delta}_{H,F}(\mathcal{E})+\frac{H^3}{2H^2F}(HF\ch_1(\mathcal{E}))^2.$$
\end{corollary}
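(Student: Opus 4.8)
The plan is to obtain Corollary \ref{cor1} as a formal consequence of the Bogomolov-type inequality in Proposition \ref{Bog-relative}, once Conjecture \ref{conj1} has been verified for $\mathcal{X}=\mathbb{P}(E)$ in the full range of parameters. First I would note that Theorem \ref{main2} establishes exactly the inequality of Conjecture \ref{conj2} for every $\nu^{\alpha,\beta}_{H,F}$-semistable object on $\mathbb{P}(E)$ with $\nu^{\alpha,\beta}_{H,F}(\mathcal{E})=0$, and it does so for all $(\alpha,\beta)\in\mathbb{R}_{>0}\times\mathbb{R}$, with no rationality restriction, since the induction there runs on the integer-valued quantity $\overline{\Delta}_{H,F}$. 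By Theorem \ref{thm3.8}, the validity of Conjecture \ref{conj2} for all such $(\alpha,\beta)$ is equivalent to the validity of Conjecture \ref{conj1}; hence Conjecture \ref{conj1} holds on $\mathbb{P}(E)$ for every $(\alpha,\beta)\in\mathbb{R}_{>0}\times\mathbb{R}$.

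With this in hand, I would apply Proposition \ref{Bog-relative} directly to the given $\mu_{H,F}$-semistable torsion free sheaf $\mathcal{E}$: the hypothesis of that proposition is precisely that Conjecture \ref{conj1} holds for all $(\alpha,\beta)$, which we have just confirmed, and it already incorporates the reduction from the semistable to the $\mu_{H,F}$-stable case via the stable factors. The conclusion is $\nabla_{H,F}(\mathcal{E})\geq 0$, and by the identity recorded in the statement of Proposition \ref{Bog-relative},
$$\nabla_{H,F}(\mathcal{E})=\widetilde{\Delta}_{H,F}(\mathcal{E})-\frac{H^3}{6H^2F}\overline{\Delta}_{H,F}(\mathcal{E})-\frac{H^3}{2H^2F}(HF\ch_1(\mathcal{E}))^2.$$
Rearranging the inequality $\nabla_{H,F}(\mathcal{E})\geq 0$ then yields exactly the asserted bound
$$\widetilde{\Delta}_{H,F}(\mathcal{E})\geq\frac{H^3}{6H^2F}\overline{\Delta}_{H,F}(\mathcal{E})+\frac{H^3}{2H^2F}(HF\ch_1(\mathcal{E}))^2.$$

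The only point requiring care is the logical bookkeeping among the three earlier results: one must check that Theorem \ref{main2} genuinely covers every $(\alpha,\beta)$, so that Theorem \ref{thm3.8} can be invoked without the $\sqrt{\mathbb{Q}_{>0}}\times\mathbb{Q}$ restriction that appears in, for instance, Theorem \ref{main}. Beyond that there is no substantive obstacle at this stage: the geometric and cohomological content has already been absorbed into Theorem \ref{main2} through Proposition \ref{bar} and the Hirzebruch--Riemann--Roch estimates on $\mathbb{P}(E)$, so the proof of the corollary reduces to chaining these implications and rewriting the resulting inequality.
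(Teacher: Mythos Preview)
Your proposal is correct and follows essentially the same route as the paper: the paper's proof simply cites Proposition~\ref{Bog-relative} and Theorem~\ref{main2}, with the passage from Conjecture~\ref{conj2} to Conjecture~\ref{conj1} via Theorem~\ref{thm3.8} left implicit. Your extra care about Theorem~\ref{main2} covering all $(\alpha,\beta)\in\mathbb{R}_{>0}\times\mathbb{R}$ is exactly the point needed to feed into Proposition~\ref{Bog-relative}, and nothing further is required.
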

\begin{proof}
The conclusion follows from Proposition \ref{Bog-relative} and
Theorem \ref{main2}.
\end{proof}

\begin{corollary}\label{cor2}
There exist stability conditions satisfying the support property on
$\mathcal{X}=\mathbb{P}(E)$.
\end{corollary}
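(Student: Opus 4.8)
The plan is to assemble this from the pieces already established. By Theorem \ref{main2}, Conjecture \ref{conj2} holds for $\mathcal{X}=\mathbb{P}(E)$ and every $(\alpha,\beta)\in\mathbb{R}_{>0}\times\mathbb{R}$; in particular it holds for every $(\alpha,\beta)\in\sqrt{\mathbb{Q}_{>0}}\times\mathbb{Q}$. By Theorem \ref{thm3.8}, this is equivalent to Conjecture \ref{conj1} holding for all such parameters. So the first hypothesis of Theorem \ref{main} is verified for $\mathcal{X}=\mathbb{P}(E)$.

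Next I would check the remaining hypothesis of Theorem \ref{main}: that the classical Bogomolov inequality holds on every fiber of $f$. Since $\mathcal{X}=\mathbb{P}(E)$ is a projective bundle over the smooth curve $C$, every scheme-theoretic fiber is isomorphic to $\mathbb{P}^2$, hence smooth. As noted in the excerpt immediately after the definition of ``the Bogomolov inequality holds on the fiber $\mathcal{X}_p$'', the inequality holds automatically on every smooth fiber of $f$ (this is just the classical Bogomolov--Gieseker inequality on $\mathbb{P}^2$, applied via Lemma \ref{Chern} to identify the Chern characters of the pushforward with those on the fiber). Therefore both hypotheses of Theorem \ref{main} are satisfied.

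Now I would simply invoke Theorem \ref{main}: for any $(\alpha,\beta)\in\sqrt{\mathbb{Q}_{>0}}\times\mathbb{Q}$ and any $s,t\in\mathbb{Q}_{>0}$, the pair $(Z_{s,t},\mathcal{A}_t^{\alpha,\beta}(\mathcal{X}))$ is a Bridgeland stability condition on $\D^b(\mathcal{X})$ satisfying the support property. This produces the required stability conditions on $\mathbb{P}(E)$, completing the proof. There is essentially no obstacle here — the corollary is a direct consequence of Theorem \ref{main2} (which is the substantive result, proved earlier by induction on $\overline{\Delta}_{H,F}$ using Proposition \ref{bar}) together with Theorem \ref{main} and the smoothness of the fibers; the only point requiring a moment's thought is recording that $\mathbb{P}^2$-fibers are smooth so that the fiberwise Bogomolov inequality is automatic.
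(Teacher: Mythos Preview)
Your proposal is correct and follows essentially the same route as the paper, which simply cites Theorem \ref{thm3.8}, Theorem \ref{main2}, and Theorem \ref{main}. The only addition you make is the explicit observation that all fibers of $\mathbb{P}(E)\to C$ are isomorphic to $\mathbb{P}^2$ and hence smooth, so the fiberwise Bogomolov hypothesis of Theorem \ref{main} is automatic; this is a reasonable detail to spell out and is implicit in the paper's one-line proof.
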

\begin{proof}
The conclusion follows from Theorem \ref{thm3.8}, Theorem
\ref{main2} and Theorem \ref{main}.
\end{proof}

\bibliographystyle{amsplain}

\end{document}